\title{Polynomials and harmonic functions on discrete groups}
\date{}
\author{Tom Meyerovitch, Idan Perl, Matthew Tointon and Ariel Yadin}
\subjclass[2010]{20F65 (primary), 05C25 (secondary)}
\address{Tom Meyerovitch,  Idan Perl  and Ariel Yadin:
Department of Mathematics, Ben Gurion University of the Negev, Be'er Sheva, Israel.}
\email{\{mtom, perli, yadina\} @math.bgu.ac.il}
\address{ Matthew Tointon: Laboratoire de Math\'ematiques, Universit\'e Paris-Sud 11, 91405 Orsay cedex, France}
\email{matthew.tointon@math.u-psud.fr}
\newtheorem{prop}{Proposition}[section]
\newtheorem{theorem}[prop]{Theorem}
\newtheorem{lemma}[prop]{Lemma}
\newtheorem{corollary}[prop]{Corollary}
\newtheorem{conjecture}[prop]{Conjecture}
\theoremstyle{definition}
\newtheorem{definition}[prop]{Definition}
\theoremstyle{remark}
\newtheorem{example}[prop]{Example}
\newtheorem{remark}[prop]{Remark}
\theoremstyle{theorem}
\newcommand{\R}{\mathbb{R}}
\newcommand{\C}{\mathbb{C}}
\newcommand{\N}{\mathbb{N}}
\newcommand{\Z}{\mathbb{Z}}
\newcommand{\E}{\mathbb{E}}
\newcommand{\F}{\mathbb{F}}
\newcommand{\n}{\mathfrak{n}}
\newcommand{\dist}{\text{\textup{dist}}}
\newcommand{\Span}{\text{\textup{span}}}
\newcommand{\Hom}{\text{\textup{Hom}}}
\newcommand{\poly}{\text{\textup{poly}}}
\newcommand{\p}{\partial}
\newcommand{\IP}[1]{\langle #1 \rangle}
\numberwithin{equation}{section}
\begin{document}
\maketitle
\begin{abstract}
Alexopoulos proved that on a finitely generated virtually nilpotent group, the restriction of a harmonic function of polynomial growth to a torsion-free nilpotent subgroup of finite index is always a polynomial in the Mal'cev coordinates of that subgroup.
For general groups, vanishing of higher-order discrete derivatives gives a natural notion of polynomial maps, which has been considered by Leibman and others.
We provide a simple proof of  Alexopoulos's result using this notion of polynomials, under the weaker hypothesis that the space of harmonic functions of polynomial growth of degree at most $k$ is finite dimensional.
We also prove that for a finitely generated group the Laplacian maps the polynomials of degree $k$ surjectively onto the polynomials of degree $k-2$.
We then present some corollaries.
In particular, we  calculate precisely the dimension of the space of harmonic functions of polynomial
growth of degree at most $k$ on a virtually nilpotent group, extending an old result of  Heilbronn for the abelian case, and refining a more recent result of Hua and Jost.

%\textbf{2010 Mathematics Subject Classification} 20F65 (primary), 05C25 (secondary).
\end{abstract}

\setcounter{tocdepth}{1}
\tableofcontents

\section{Introduction}
This paper has two principal aims. The first is to give a new proof of a result of Alexopoulos describing the structure of the harmonic functions of polynomial growth on a virtually nilpotent group.
The second is to compute precisely the dimensions of the subspaces of harmonic functions of polynomial growth of given degree.
This computation is in turn an ingredient in a recent proof of the third author that the dimension of the space of \emph{all} harmonic functions is finite if and only if $G$ is virtually cyclic \cite{tointon}.

Before presenting a precise formulation of our results, let us briefly give a non-rigorous overview. The structure theorem of Alexopoulos shows that a harmonic function of polynomial growth on a virtually nilpotent group is essentially a \emph{polynomial} \cite{alex}. A study of the former therefore necessarily entails a study of the latter. There are a few natural ways to define a polynomial, each of which is useful in certain contexts. For a general finitely generated group, a particularly natural approach is to define
a polynomial as a function that vanishes after taking a bounded number of derivatives. Following Leibman \cite{leibman} and others, we take this as our principal definition; see Definition \ref{def:poly} below,

Another possible way to define a polynomial on a group, as used by Alexopoulos in his original argument, is in terms of a certain \emph{coordinate system}, defined in terms of the lower central series of the group. We describe this in detail in Section \ref{sec:coords}. If one is comfortable with the theory of Lie groups, then in the context of torsion-free nilpotent groups
a natural coordinate system to choose 
is one given by a {\em Mal'cev} basis; see Subsection \ref{subsec:alex}.

As it turns out all of these definitions are equivalent. We prove this precisely as Proposition \ref{thm:polys.equiv} below, using an argument essentially due to Leibman. Throughout this paper, we exploit this equivalence to choose the definition that appears best suited to proving each result.

\subsection{The structure of harmonic functions of polynomial growth}
Let $G$ be a finitely generated group.
Fix a finite generating set $S$ for $G$, and assume that $S$ is \emph{symmetric}, in the sense that $s \in S$ if and only if $s^{-1} \in S$.
This gives rise to a metric structure (in fact a graph structure) on $G$, defined via
$$ \dist(x,y) = \dist_{G,S}(x,y) = |x^{-1} y| = \min \{ n \ : \ x^{-1} y = s_1 \cdots s_n \ , \ s_j \in S \} . $$
This metric is easily seen to be the graph metric of the {\em Cayley graph} of $G$ with respect to $S$,
obtained by taking the vertex set to be elements of $G$ and edges defined by the relation $x \sim y \iff x^{-1}y \in S$.
By definition, this metric is left invariant, so all information is contained in the distance to the identity element $\dist(x,1) = |x|$.

For a function $f:G \to \C$ and a non-negative integer $k$, define the (possibly infinite) quantity $|| f ||_k$ via
$$ || f ||_k = \limsup_{r \to \infty} r^{-k} \max_{|x| \leq r } |f(x)| . $$
This quantity is invariant under the natural $G$-action on functions defined by $xf(y) = f(x^{-1}y)$.
Note also that $||f ||_k < \infty$ if and only if there exists a constant $C=C_f >0$ such that
for all $x\ne1$ we have $|f(x)| \leq C |x|^k$.

In the case that $|| f ||_k<\infty$ we say that $f$ has \emph{polynomial growth of degree at most $k$}. This does not depend on a particular choice of finite symmetric generating set $S$. It is straightforward to check that $|| \ \cdot \ ||_k$ defines a $G$-invariant seminorm on the space of functions of polynomial growth of degree at most $k$.

Now consider a probability measure $\mu$ on $G$.
Throughout, we assume that $\mu$ satisfies the following standing assumptions:
\begin{itemize}
\item $\mu$ is {\em symmetric} in the sense that $\mu(x) = \mu(x^{-1})$;
\item $\mu$ is {\em adapted} to $G$, which is to say that the support of $\mu$ generates $G$ as a group;
\item $\mu$ has {\em exponential tail},  which is to say that there exists some $\varepsilon>0$ such that $\mu(x) \le e^{-\varepsilon |x|}$.
\end{itemize}

The uniform probability measure on a finite, symmetric generating set is a canonical example of a measure satisfying these standing assumptions. 
Considering the larger class of measures with exponential tail enables us in particular to move freely
to finite-index subgroups (see Proposition \ref{prop:dim}, below), although in some of our results we do make the stronger assumption that $\mu$ is finitely supported.
Note that if $\mu$ has exponential tail
with respect to some finite symmetric generating set
then it also has exponential tail with respect to any other finite symmetric generating set, so
the notion of having exponential tail does not depend on the specific choice of generating set.
The same standing assumptions appeared in  \cite{mey-yad}, where measures with exponential tail were called ``smooth''.

The \emph{Laplacian operator} $\Delta= \Delta_\mu$ acting on functions $f:G\to \C$  of polynomial growth is defined via
$$
\Delta f (x) =f(x) -\E_s[f(xs)]= \sum_s \mu(s) (f(x) - f(xs) ) .%= -\sum_s \mu (s) \p^s f(x).
$$
Note that for $f$ of polynomial growth of degree $k$  the assumption that $\mu$ has exponential tail implies that
$$\sum_s \mu(s) |f(xs)| \le C \sum_s \mu(s) |s|^k < \infty.$$
Thus the sum implicit in the expression $\Delta f (x)$ converges absolutely.

A function $f:G \to \C$ is {\em $\mu$-harmonic}, or just {\em harmonic} when $\mu$ is clear from context,  if  $\Delta_\mu f(x)= 0$ for all $x \in G$, or equivalently if
$$ f(x) = \sum_{s \in G} \mu(s) f(xs) . $$

In this paper we are mainly interested in the spaces $H^k(G,\mu)$ of harmonic functions of polynomial growth, defined by
$$ H^k(G,\mu) = \{ f : G \to \C \ \big| \ || f ||_k < \infty \ , \ \Delta_\mu f =0\} . $$

Leibman \cite{leibman} has studied in some detail the notion of a polynomial mapping between arbitrary groups. The first purpose of this paper is to describe harmonic functions of polynomial growth in terms of such polynomial mappings. We use the following definition, which is Leibman's definition specialised to our setting.
\begin{definition}[Polynomial]\label{def:poly}
Given $f:G\to\C$ and an element $u\in G$ we define the \emph{left derivative} $\p_uf$ of $f$ with respect to $u$ by
$\p_uf(x)=f(u x)-f(x)$; that is, $\p_u f = u^{-1}f-f$, where $u^{-1}f$ is the left action of $u^{-1}$ on $f$.

Let $H < G$ be a subgroup. A function  $f:G \to \C$ is called a \emph{polynomial with respect to $H$} if there exists some integer $k\ge0$ such that
\[ \p_{u_1}\cdots\p_{u_{k+1}}f = 0\text{ for all }u_1,\ldots,u_{k+1}\in H \]
The \emph{degree} (with respect to $H$) of a non-zero polynomial $f$ is the smallest such $k$.
When $H=G$ we simply say that  $f:G\to\C$ is a \emph{polynomial}.
We denote the space of polynomials of degree at most $k$ by $P^k(G)$.
For notational convenience we also define $P^k(G)=\{0\}$ for $k<0$.
\end{definition}

Recall
that for $x,y \in G$ the commutator  is given by $[x,y] = x^{-1} y^{-1} x y$.
For subgroups $H_1, H_2 \leq G$, $[H_1,H_2]$ denotes the subgroup generated by commutators:
$$[H_1,H_2] := \langle[h_1,h_2] ~:~ h_1 \in H_1,~ h_2 \in H_2 \rangle.$$
The lower central series of a group $G$ is defined inductively by $G_1:=G, G_{k+1}:=[G_1,G_k]$. A group  $N$ is called {\em nilpotent} if there exists $k\in\N$ such that $N_{k+1}=\{1\}$. %where by $1_N$ we mean the identity element of $G$.
The minimal such $k$ is called the \emph{step} or \emph{nilpotency class} of $N$.

For a group property $\mathcal{P}$ we say that a group $G$ is {\em virtually $\mathcal{P}$} if there exists a finite-index subgroup $H < G$ such that $H$ is $\mathcal{P}$.  For example, $G$ is {\em virtually nilpotent} if there exists a finite-index nilpotent subgroup $H$ in $G$.

In his paper \cite{alex}, Alexopoulos proves various results concerning random walks on a finitely generated virtually nilpotent group $G$, including a central limit theorem and a Berry--Esseen-type theorem. His methods include embedding a finite-index torsion-free nilpotent subgroup of $N$ as a lattice in a Lie group.

One consequence of Alexopoulos's work that is particularly useful for studying harmonic functions of polynomial growth is a description of such functions in terms of polynomials, as follows.
\begin{theorem}[Alexopoulos \cite{alex}]\label{thm:alex.cosets}
Let $G$ be a finitely generated group with a finite-index nilpotent subgroup $N$,
and let $\mu$ be a probability measure on $G$ that satisfies the standing assumptions and is, in addition, finitely supported.
Suppose that $f:G\to \C$ is $\mu$-harmonic and has polynomial growth of degree at most $k$. Then $f$ is a polynomial of degree at most $k$ with respect to $N$. Moreover, $N_{k+1}$ acts trivially on $f$ from the left, in the sense that $gf=f$ for every $g\in N_{k+1}$.
\end{theorem}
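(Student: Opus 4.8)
The plan is to leverage the finite-dimensionality of $H^k(G,\mu)$ together with the $G$-action to show that left-translation by elements of $N$ acts ``unipotently'' on a suitable filtration of this space. The key observation is that since $f$ is harmonic and has polynomial growth of degree at most $k$, and since the Laplacian $\Delta_\mu$ commutes with the left $G$-action, the translates $uf$ (for $u \in N$) all lie in the same finite-dimensional space $H^k(G,\mu)$. The left derivatives $\p_u f = u^{-1}f - f$ again satisfy $\Delta_\mu(\p_u f) = 0$, and one should check that $\p_u f$ has polynomial growth of degree \emph{strictly less} than $k$. Granting this, one obtains that $\p_u$ maps $H^k$ into $H^{k-1}$, and iterating, $\p_{u_1}\cdots\p_{u_{k+1}} f \in H^{-1} = \{0\}$, which is exactly the statement that $f$ is a polynomial of degree at most $k$ with respect to $N$.

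\textbf{The crux} is establishing that taking a left derivative along $u \in N$ strictly decreases the growth degree. For a single derivative this is not automatic from the definition of $\|\cdot\|_k$ alone; one needs the harmonicity in an essential way, since there certainly exist non-harmonic functions (e.g.\ linear characters) whose derivatives do not drop in degree. The natural tool here is a discrete analogue of the mean-value property or a gradient estimate: harmonic functions of polynomial growth degree $k$ should have discrete gradients of growth degree $k-1$. I expect to invoke a result of the form ``$\|\p_u f\|_{k-1} < \infty$ whenever $\Delta_\mu f = 0$ and $\|f\|_k < \infty$,'' which in turn rests on the exponential tail of $\mu$ and the finite-dimensionality hypothesis. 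Concretely, on the finite-dimensional space $H^k$ the seminorms $\|\cdot\|_j$ for $j < k$ are either finite or identically infinite on each translation-invariant subspace, and a compactness/eigenvalue argument on the action of $N$ (which factors through the nilpotent quotient) forces the filtration $H^{k} \supset H^{k-1} \supset \cdots$ to be respected by the derivatives. \textbf{This growth-drop step is the main obstacle}, and I anticipate it is where the finite-dimensionality assumption (rather than the full strength of Alexopoulos's Lie-theoretic machinery) does the real work, replacing his central limit theorem estimates.

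\textbf{For the ``moreover'' clause} asserting that $N_{k+1}$ acts trivially, the plan is to observe that elements of $N_{k+1}$ are iterated commutators of weight $k+1$, and to translate the polynomial property into a statement about these commutators via an identity expressing derivatives along a commutator in terms of higher-order derivatives along its constituents. Since $f$ has polynomial degree at most $k$ with respect to $N$, any product of $k+1$ derivatives along elements of $N$ annihilates $f$; a standard commutator-expansion argument (of the type used to prove Proposition~\ref{thm:polys.equiv}) then shows that $\p_g f = 0$ for every $g \in N_{k+1}$, which is precisely the statement $gf = f$. I would carry out the steps in the order: first verify harmonicity and the degree drop of $\p_u f$; then deduce the polynomial property by iteration; and finally derive the $N_{k+1}$-triviality from the commutator calculus. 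The only delicate point in this last part is ensuring the commutator identities, which hold formally for the left-derivative operators, interact correctly with the already-established vanishing, but this should follow from the equivalence of polynomial notions established earlier.
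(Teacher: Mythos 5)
Your overall strategy---exploiting the finite-dimensional left $G$-action on $H^k(G,\mu)$---is in the spirit of the paper's Theorem \ref{thm:alex.gen}, but the proof has a genuine gap at exactly the point you flag as the crux: the ``growth-drop'' lemma, that $\Delta_\mu f=0$ and $\|f\|_k<\infty$ imply $\|\p_uf\|_{k-1}<\infty$ for $u\in N$, is never proved, and nothing in your sketch delivers it. What your compactness/eigenvalue considerations actually yield is much weaker. The weight-space decomposition (Lemma \ref{lem:eigen}) is only available after passing to \emph{some} finite-index subgroup $H$ of $G$ (a discrete nilpotent group can have finite-order characters and higher-dimensional irreducible representations, so one cannot decompose under $N$ itself); combined with the Liouville property it shows that $H$ acts \emph{unipotently} on $H^k(G,\mu)$, which kills $f$ only after $\dim H^k(G,\mu)$ derivatives, not after $k+1$. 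Likewise, the fact that $\|\cdot\|_k$ descends to a $G$-invariant norm on $H^k(G,\mu)$ modulo its kernel shows at best that $\p_uf$ lies in that kernel, i.e.\ $\|\p_uf\|_k=0$, meaning growth $o(r^k)$; this is not membership in $H^{k-1}(G,\mu)$, since nothing rules out, a priori, harmonic functions of non-integer growth rate (growth like $r^k/\log r$, say). So the iteration $\p_{u_1}\cdots\p_{u_{k+1}}f\in H^{-1}=\{0\}$ stalls after one step. Indeed, the per-derivative growth drop is, a posteriori, essentially equivalent to the theorem itself; proving it directly is what requires Alexopoulos-style heat-kernel/gradient estimates, precisely the machinery this argument is meant to avoid. (Note also that the finite-dimensionality of $H^k(G,\mu)$, which you treat as given, itself rests on Kleiner's theorem.)

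The paper's proof decouples the two roles you ask a single lemma to play. First, unipotency alone (Theorem \ref{thm:alex.gen}) gives that $f$ restricts to a polynomial of \emph{some} degree, bounded only by $\dim H^k(G,\mu)$, with respect to some finite-index subgroup $H$. Second, the purely algebraic Proposition \ref{prop:growth.of.polys}---proved via Lemma \ref{lem:deriv.hom}, i.e.\ multilinearity of the top derivative evaluated at $m$-th powers---shows that any polynomial whose growth is $o(r^{k+1})$ has degree at most $k$; no per-derivative growth estimate is ever needed. Finally, to land on the \emph{prescribed} subgroup $N$ rather than the unspecified $H$ produced by Lemma \ref{lem:eigen}, the paper runs a dimension count: Proposition \ref{prop:dim}, the surjectivity $\Delta P^{k+2}(N)=P^k(N)$ of Theorem \ref{thm:lap_poly_img}, and Proposition \ref{prop:restriction} together force $H^k(N,\mu_N)\cap P^k(N)=H^k(N,\mu_N)$, after which a conjugation argument extends the conclusion from $N$ to all of $G$. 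Your proposal has no substitute for this passage from $H$ to $N$ either. Your treatment of the ``moreover'' clause via commutator expansion (in effect Lemma \ref{lem:comm.deriv}) would be fine once the main statement were in hand.
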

The first aim of this paper is to give a more direct and elementary proof of Theorem \ref{thm:alex.cosets}, in particular without appealing to the theory of Lie groups. In fact, our argument recovers essentially the same conclusion under a technically weaker assumption. The first step in our proof of Theorem \ref{thm:alex.cosets} is to show that if $G$ is virtually nilpotent then there is \emph{some} finite-index subgroup $H$ of $G$ such that every function in $H^k(G,\mu)$ is a polynomial with respect to $H$. However, for this part of the argument the only way in which we use the virtual nilpotency of $G$ is in using the well-known fact that this implies $\dim H^k(G,\mu)<\infty$. Thus, the first step of our argument yields the following generalisation of Theorem \ref{thm:alex.cosets}.
\begin{theorem}\label{thm:alex.gen}
Let $G$ be a finitely generated group and let $\mu$ be a probability measure on $G$ satisfying the standing assumptions. Let $k\ge0$ and suppose that $\dim H^k(G,\mu) <\infty$.
Then there is a finite-index normal subgroup $H$ of $G$ such that every $f\in H^k(G,\mu)$ factors through $G/H_{k+1}$ and is a polynomial of degree $k$ with respect to $H$.
\end{theorem}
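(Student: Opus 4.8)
The plan is to package $V:=H^k(G,\mu)$ as a finite-dimensional representation of $G$ and to read off the conclusion from the linear algebra of one commuting family of operators. Since the Laplacian $\Delta_\mu$ commutes with left translations and the seminorm $\|\cdot\|_k$ is $G$-invariant, the left action $\rho(x)f(y)=f(x^{-1}y)$ preserves $V$, giving a homomorphism $\rho\colon G\to GL(V)$; by hypothesis $\dim V<\infty$. The point of contact with Definition~\ref{def:poly} is that on $V$ one has $\partial_u=\rho(u^{-1})-\id$. Thus ``$f$ is a polynomial of degree $\le k$ with respect to $H$'' becomes the operator identity $(\rho(h_1^{-1})-\id)\cdots(\rho(h_{k+1}^{-1})-\id)=0$ for all $h_i\in H$, and ``$H_{k+1}$ acts trivially'' becomes $\rho(H_{k+1})=\{\id\}$. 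Finally, each subspace $H^j$ ($0\le j\le k$) is $\rho(G)$-invariant, so we have a $G$-stable flag $\{0\}=H^{-1}\subseteq H^0\subseteq\cdots\subseteq H^k=V$, where $H^{-1}=\{0\}$ because an adapted symmetric walk admits no nonzero harmonic function vanishing at infinity (maximum principle).

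I would then reduce the whole statement to a single finiteness assertion: \emph{for each $j$ the induced action $\bar\rho^{(j)}\colon G\to GL(H^j/H^{j-1})$ has finite image}. Granting this, set $H:=\bigcap_{j=0}^k\ker\bar\rho^{(j)}$, a finite-index \emph{normal} subgroup of $G$. For $h\in H$ we then have $(\rho(h)-\id)H^j\subseteq H^{j-1}$ for every $j$, i.e. each operator $\partial_u$ with $u\in H$ lowers the flag by one step; composing $k+1$ of them sends $V=H^k$ into $H^{-1}=\{0\}$, which is precisely the polynomial property of degree $\le k$ with respect to $H$. Moreover $\rho(H)$ lies in the unipotent group $U=\{T\in GL(V): (T-\id)H^j\subseteq H^{j-1}\ \forall j\}$, whose lower central series terminates at $\Gamma_{k+1}(U)=\{\id\}$ (the filtration $U^{(i)}=\{T:(T-\id)H^j\subseteq H^{j-i}\}$ satisfies $[U^{(i)},U^{(i')}]\subseteq U^{(i+i')}$ and $U^{(k+1)}=\{\id\}$); hence $\rho(H_{k+1})\subseteq\Gamma_{k+1}(\rho(H))\subseteq\{\id\}$, giving both that every $f\in V$ factors through $G/H_{k+1}$ and that the action is nilpotent of class $\le k$.

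It remains to establish the finiteness of each $\bar\rho^{(j)}(G)$, and here I would first extract everything available from polynomial growth. For $u\in G$ and $f\in V$ one has $(\rho(u)^nf)(y)=f(u^{-n}y)$, so $|(\rho(u)^nf)(y)|\le C_f(n|u|+|y|)^k$ grows only polynomially in $n$; choosing a finite set $Y$ for which the evaluation map $V\hookrightarrow\C^Y$ is injective, this gives $\|\rho(u)^n\|=O(n^k)$. Applying the same bound to $u^{-1}$ shows that every eigenvalue of every $\rho(u)$ is unimodular, with Jordan blocks of size at most $k+1$. Passing to the quotient of $H^j$ by the kernel of the invariant seminorm $\|\cdot\|_j$ (which one checks equals $H^{j-1}$, a growth-gap statement) realises $\bar\rho^{(j)}(G)$ as a group of linear isometries of a finite-dimensional normed space, hence as a relatively compact group of semisimple, unimodular operators.

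The genuine obstacle is to upgrade ``unimodular'' to ``root of unity'', after which the Schur--Burnside theorem — a finitely generated linear group all of whose elements have finite order is finite — immediately yields finiteness of $\bar\rho^{(j)}(G)$. The growth estimate controls only the given archimedean place and is perfectly consistent with an infinite, irrational-rotation-type semisimple action, so such behaviour must be excluded using the remaining hypotheses, namely the finite-dimensionality of $V$ together with the symmetry of $\mu$. I expect this to be the crux of the argument and to require a genuinely harmonic, rather than purely linear-algebraic, input — for instance an energy or reduced-cohomology estimate exploiting the self-adjointness of the symmetric Markov operator to force the semisimple part of each $\rho(u)$ to have finite order. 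Everything else above is routine bookkeeping once this finiteness is in hand.
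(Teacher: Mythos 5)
Your skeleton is sound as far as it goes: the identification $\partial_u=\rho(u^{-1})-\mathrm{Id}$ on $V=H^k(G,\mu)$, the flag argument, the unipotence computation giving $\rho(H_{k+1})=\{\mathrm{Id}\}$, and the growth estimates forcing unimodular eigenvalues with Jordan blocks of size at most $k+1$ are all correct. But the proposal stops exactly where the content of the theorem lies, and you say so yourself: the finiteness of each $\bar\rho^{(j)}(G)$ --- equivalently, the exclusion of infinite, irrational-rotation-type semisimple actions --- is not a missing estimate to be filled in later; it \emph{is} the theorem, and nothing in your setup rules it out. A secondary unproven step compounds this: your ``growth-gap'' claim, that the kernel of $\|\cdot\|_j$ restricted to $H^j(G,\mu)$ is exactly $H^{j-1}(G,\mu)$, is itself a structural statement about harmonic functions that is not available for a general group without the very structure theory being proved, so the ``relatively compact group of isometries'' picture is not justified either. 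As it stands, the proof has a genuine gap at its crux.

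For comparison, here is how the paper closes that gap. Since every $f\in H^k(G,\mu)$ is left-invariant under $\ker\rho$, it factors as $f=\tilde f\circ\pi$ with $\pi=\rho$, so $\pi(G)\le GL(V)$ is a finitely generated \emph{linear} group with $\dim H^k(\pi(G),\mu\circ\pi^{-1})<\infty$; the theorem of Meyerovitch--Yadin \cite[Theorem 1.4]{mey-yad} then yields that $\pi(G)$ is virtually nilpotent. This is the decisive external input, and it buys exactly the two things your argument lacks: Lemma \ref{lem:eigen}, which decomposes $V=\bigoplus_j V_{\lambda_j}^\ast$ into generalised weight spaces with respect to a finite-index \emph{normal} subgroup $H$ of $G$, and the Liouville property for virtually nilpotent groups \cite{kai-ver}. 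For an eigenfunction $f\in V_\lambda^{(1)}$, polynomial growth forces $|\lambda(x)|=1$ (your unimodularity computation); this makes $f$ bounded on each of the finitely many cosets of $H$, hence bounded on $G$, hence constant by Liouville, so $\lambda$ must be the trivial character. Thus $V=V_1^\ast$ is a single unipotent block over $H$, the weight filtration $V_1^{(n)}$ plays the role of your flag (its length is a priori $\dim V$ rather than $k+1$, which is why the paper then invokes $\|f\|_k<\infty$ and Proposition \ref{prop:growth.of.polys} to cut the degree down to $k$), and a conjugation argument extends the polynomial property from $H$ to all of $G$ and gives the factoring through $G/H_{k+1}$ via Lemma \ref{lem:comm.deriv}. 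So the ``genuinely harmonic input'' you anticipated is the Liouville property; but it only becomes usable after the linear-groups case of the virtual-nilpotency conjecture has been invoked, and without that step your unimodular rotations cannot be killed.
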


\begin{remark}
Theorem \ref{thm:alex.gen} is a generalisation of Theorem \ref{thm:alex.cosets} in the sense that its hypotheses are implied by those of Theorem \ref{thm:alex.cosets}. However, it is conjectured that finite dimensionality of $H^k(G,\mu)$ implies that $G$ is virtually nilpotent \cite{mey-yad}, and if this conjecture held then the hypotheses of Theorems \ref{thm:alex.cosets} and \ref{thm:alex.gen} would be equivalent and Theorem \ref{thm:alex.gen} would not be significantly stronger than Theorem \ref{thm:alex.cosets}. The first and last authors have proved this conjecture in the cases of virtually solvable and linear groups \cite{mey-yad}, a fact that we use in our proof of Theorem \ref{thm:alex.gen}.
\end{remark}

\subsection{The dimensions of the spaces $H^k(G,\mu)$}
One of our motivations for studying the structure of the spaces $H^k(G,\mu)$ is to compute their dimensions. An additional result that is key to being able to do this is the following theorem regarding the image of the Laplacian operating on the space of polynomials.
\begin{theorem}\label{thm:lap_poly_img}
Let $G$ be a finitely generated group, and let $\mu$ be a probability measure on $G$ satisfying the standing assumptions.  Then for all $k \in \N$ we have $$\Delta_\mu P^{k}(G)=P^{k-2}(G).$$
\end{theorem}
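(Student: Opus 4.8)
The plan is to treat the two inclusions $\Delta_\mu P^k(G)\subseteq P^{k-2}(G)$ and $P^{k-2}(G)\subseteq \Delta_\mu P^k(G)$ separately, the first being soft and the second carrying the real content. For the inclusion, the key observation is that $\Delta_\mu$ commutes with every left derivative $\partial_u$. Indeed, writing $R_sf(x)=f(xs)$ we have $\Delta_\mu=I-\sum_s\mu(s)R_s$, which is assembled from right translations, whereas $\partial_u$ is assembled from the left translation $u^{-1}f$; since left and right translations commute, so do $\Delta_\mu$ and $\partial_u$. In particular $\Delta_\mu$ preserves each $P^k(G)$. To upgrade this to a two-step drop in degree, observe that for $f\in P^k(G)$ and $u_1,\dots,u_{k-1}\in G$ commutation gives
$$\partial_{u_1}\cdots\partial_{u_{k-1}}\Delta_\mu f=\Delta_\mu\big(\partial_{u_1}\cdots\partial_{u_{k-1}}f\big),$$
and the function $g:=\partial_{u_1}\cdots\partial_{u_{k-1}}f$ lies in $P^1(G)$. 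It therefore suffices to check that $\Delta_\mu$ annihilates $P^1(G)$. A direct computation from Definition \ref{def:poly} identifies $P^1(G)=\C\oplus\Hom(G,\C)$: constants are killed because $\mu$ is a probability measure, while for $\phi\in\Hom(G,\C)$ the symmetry $\mu(s)=\mu(s^{-1})$ gives $\Delta_\mu\phi(x)=-\sum_s\mu(s)\phi(s)=0$. Hence $\Delta_\mu g=0$, and letting the $u_i$ vary we conclude $\Delta_\mu f\in P^{k-2}(G)$.

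For surjectivity I would induct on $k$ after passing to the associated graded space $\mathrm{gr}^\bullet=\bigoplus_j P^j(G)/P^{j-1}(G)$. Since $\Delta_\mu$ lowers degree by two it induces a linear map $D_k\colon \mathrm{gr}^k\to\mathrm{gr}^{k-2}$, and a standard lifting argument reduces the theorem to the surjectivity of each $D_k$: given $h\in P^{k-2}(G)$, lift the class of $h$ through $D_k$ to some $f_0\in P^k(G)$, so that $h-\Delta_\mu f_0\in P^{k-3}(G)$, and then remove this lower-order error using the inductive hypothesis for $\Delta_\mu\colon P^{k-1}(G)\to P^{k-3}(G)$; the base cases $k\le 1$ are trivial because $P^{k-2}(G)=\{0\}$. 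This part of the argument is purely formal and is insensitive to whether the graded pieces are finite dimensional.

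The main obstacle is thus to prove that $D_k$ is surjective, and this is where the standing assumptions enter. Using the symmetric rewriting
$$\Delta_\mu f(x)=\tfrac12\sum_s\mu(s)\big(2f(x)-f(xs)-f(xs^{-1})\big),$$
one sees that the action of $\Delta_\mu$ on the top-degree part of a polynomial is governed by a constant-coefficient second-order operator whose symbol is the covariance form $Q$ of $\mu$ on the degree-one directions. In the abelian model, where the equivalence of Proposition \ref{thm:polys.equiv} identifies $\mathrm{gr}^k$ with the symmetric forms $\mathrm{Sym}^k V$ on $V=\Hom(G,\C)^\ast$, the map $D_k$ is the generalized Laplacian $\sum_{i,j}a_{ij}\partial_i\partial_j$, which is surjective onto $\mathrm{Sym}^{k-2}V$ exactly when $Q=(a_{ij})$ is non-degenerate; non-degeneracy is precisely where I would use that $\mu$ is adapted, so that $\supp\mu$ spans $V$.

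For non-abelian (nilpotent) $G$ the same structure persists on the associated graded, with $D_k$ becoming the sub-Laplacian in the degree-one directions; since these directions bracket-generate the whole graded structure, surjectivity of $D_k$ can be established algebraically rather than by appealing to Lie theory, and carrying out this hypoelliptic-type argument is the crux of the proof. I expect the bookkeeping needed to describe $\mathrm{gr}^\bullet$ concretely through the coordinate system of Section \ref{sec:coords}, and to check that the leading symbol of $\Delta_\mu$ really is the non-degenerate form $Q$, to be the most delicate step; once surjectivity of each $D_k$ is in hand, the inductive lifting from the second paragraph delivers the theorem.
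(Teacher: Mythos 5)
Your treatment of the inclusion $\Delta_\mu P^k(G)\subseteq P^{k-2}(G)$ is correct, and is a genuinely different (equally soft) route from the paper's: you commute $\Delta_\mu$ past left derivatives and check that $\Delta_\mu$ kills $P^1(G)=\C\oplus\Hom(G,\C)$, whereas the paper writes $\Delta p(x)=-\tfrac12\E_s\left[\p^s\p^s p(xs^{-1})\right]$ and quotes right-invariance of $P^{k-2}(G)$. Your reduction of surjectivity to surjectivity of the induced graded maps $D_k\colon P^k(G)/P^{k-1}(G)\to P^{k-2}(G)/P^{k-3}(G)$ is also formally sound, and your abelian discussion is essentially Heilbronn's classical picture. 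But the proposal stops exactly where the theorem starts: surjectivity of $D_k$ for non-abelian (nilpotent) $G$ is the entire content of the result, and you do not prove it --- you assert that ``the same structure persists,'' that $D_k$ ``becomes the sub-Laplacian in the degree-one directions,'' and that surjectivity ``can be established algebraically,'' explicitly deferring that argument. As written, the theorem is unproven.

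Moreover, the structural claim on which the deferred argument rests is inaccurate, so the operator you propose to invert is not the right one. Because the grading of coordinate monomials is weighted ($x_i$ has weight $\sigma(i)$), a drop of two in degree is produced not only by two derivatives in weight-one directions but also by a single derivative in a weight-two direction; and since the coordinates of $s^{-1}$ are not simply the negatives of those of $s$, these first-order contributions do not cancel under the symmetrization. Concretely, on $H_3(\Z)$ with $z(x)=x_3$ the group law gives $\Delta z\equiv-\tfrac12\E_s[s_1s_2]$, a nonzero constant for suitable symmetric adapted measures (e.g.\ the uniform measure on $\{a^{\pm1},b^{\pm1},(ab)^{\pm1}\}$), so $D_2[z]\neq0$, whereas the pure degree-one sub-Laplacian annihilates $[z]$. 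The paper sidesteps all of this: by Proposition \ref{thm:polys.equiv} it suffices to solve $\Delta\hat p=p$ for each coordinate monomial $p=q\cdot x_m^n$, and this is done by a double induction on $m$ and on $n$ via the identity $\Delta(\hat qr)(x)=p(x)-\E_s[\hat q(xs)\p^sr(x)]$ (where $\Delta\hat q=q$ and $r(x)=x_m^n$), whose error term involves strictly smaller powers of $x_m$; the base case is the one-variable Lemma \ref{lem:base}, and the only non-degeneracy ever used is $\E_s[s_1^2]\neq0$, which follows from adaptedness. To complete your approach you would need to compute the correct $D_k$ (with its first-order terms) and prove its surjectivity, which amounts to comparable work to the paper's induction; alternatively, the paper's monomial induction can be inserted directly into your lifting scheme, at which point the graded formalism becomes unnecessary.
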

In conjunction with the theorems of the previous subsection, this allows us to arrive at the following precise computation of $\dim H^k(G,\mu)$ for an arbitrary virtually nilpotent group $G$.
\begin{theorem}\label{thm:dim}
Let $G$ be a finitely generated group with a finite-index nilpotent subgroup $N$, and let $\mu$ be a
probability measure on $G$ that satisfies the standing assumptions and is finitely supported. Then $\dim H^k(G,\mu)=\dim P^k(N)-\dim P^{k-2}(N)$.
\end{theorem}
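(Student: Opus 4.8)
The plan is to reduce the statement to the nilpotent subgroup $N$, where Theorem~\ref{thm:lap_poly_img} applies directly, and then to read off the dimension from the rank--nullity theorem. Throughout I would write $\mu_N$ for the induced (first-return) walk of $\mu$ on $N$, and I may assume $N$ is normal in $G$ by passing to its normal core if that is convenient. \emph{Step 1 (reduction to $N$).} By Proposition~\ref{prop:dim} the measure $\mu_N$ again satisfies the standing assumptions, and the restriction map $f\mapsto f|_N$ is a linear isomorphism $H^k(G,\mu)\to H^k(N,\mu_N)$; in particular $\dim H^k(G,\mu)=\dim H^k(N,\mu_N)$. It therefore suffices to prove that $\dim H^k(N,\mu_N)=\dim P^k(N)-\dim P^{k-2}(N)$.

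\emph{Step 2 (harmonic functions of polynomial growth on $N$ are harmonic polynomials).} I claim that $H^k(N,\mu_N)=\ker\big(\Delta_{\mu_N}|_{P^k(N)}\big)$. The inclusion $\supseteq$ is the easy direction: a polynomial of degree at most $k$ has polynomial growth of degree at most $k$ (this is part of the equivalence established in Proposition~\ref{thm:polys.equiv}), so any element of $P^k(N)$ killed by $\Delta_{\mu_N}$ lies in $H^k(N,\mu_N)$. For the reverse inclusion I would exploit that $\mu$, unlike $\mu_N$, is finitely supported. Given $g\in H^k(N,\mu_N)$, Step 1 provides $f\in H^k(G,\mu)$ with $f|_N=g$. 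Since $\mu$ is finitely supported, Theorem~\ref{thm:alex.cosets} applies and shows that $f$ is a polynomial of degree at most $k$ with respect to $N$. Because $\p_u(f|_N)=(\p_uf)|_N$ for $u\in N$, restricting the defining identity $\p_{u_1}\cdots\p_{u_{k+1}}f=0$ to the coset $N$ shows that $g=f|_N\in P^k(N)$, as required.

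\emph{Step 3 (rank--nullity).} The space $P^j(N)$ is finite-dimensional for each $j$, since $N$ is finitely generated and nilpotent. By Theorem~\ref{thm:lap_poly_img} applied to the group $N$ with the measure $\mu_N$, the operator $\Delta_{\mu_N}$ maps $P^k(N)$ \emph{onto} $P^{k-2}(N)$; in particular it restricts to a surjective linear map $P^k(N)\to P^{k-2}(N)$ whose kernel is, by Step 2, exactly $H^k(N,\mu_N)$. Hence $\dim H^k(N,\mu_N)=\dim P^k(N)-\dim P^{k-2}(N)$, and combining this with Step 1 completes the proof.

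The main obstacle is the reverse inclusion in Step 2: the induced walk $\mu_N$ is genuinely not finitely supported, so Theorem~\ref{thm:alex.cosets} cannot be applied to $(N,\mu_N)$ directly, and the argument must be routed back through $(G,\mu)$ via the isomorphism of Proposition~\ref{prop:dim}. The remaining points to verify carefully are that $\mu_N$ does indeed satisfy the standing assumptions, so that Theorem~\ref{thm:lap_poly_img} is available on $N$, and the finite-dimensionality of the spaces $P^j(N)$; both are needed for the rank--nullity step to yield a finite answer.
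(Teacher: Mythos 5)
Your overall strategy---reduce to a finite-index subgroup via Proposition \ref{prop:dim}, identify the harmonic functions of polynomial growth on that subgroup with the kernel of the Laplacian acting on $P^k$, and then apply rank--nullity using the surjectivity from Theorem \ref{thm:lap_poly_img}---is exactly the paper's. Steps 1 and 3 are correct as written, and the observation in Step 2 that restriction to $N$ commutes with left derivatives by elements of $N$ is also fine (though the growth bound for polynomials you attribute to Proposition \ref{thm:polys.equiv} is really Proposition \ref{prop:growth.of.polys}). The genuine problem is the ingredient you use for the reverse inclusion in Step 2: Theorem \ref{thm:alex.cosets}. Within this paper that theorem is not available at this point in the logical development: Section \ref{subsec:nilp.reduc} proves Theorem \ref{thm:dim} \emph{first} and then deduces Theorem \ref{thm:alex.cosets} from it (the paper's proof of Theorem \ref{thm:alex.cosets} opens by invoking Theorem \ref{thm:dim} to compute $\dim H^k(N,\mu_N)$). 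So your proposal, inserted into the paper, is circular. It could only be rescued by citing Alexopoulos's original Lie-theoretic proof of Theorem \ref{thm:alex.cosets}, which is precisely the external machinery this paper is written to avoid.

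The repair, which is what the paper actually does, is to replace the appeal to Theorem \ref{thm:alex.cosets} by an appeal to Theorem \ref{thm:alex.gen}, proved independently in Section \ref{sec:harmonic}. Its hypothesis $\dim H^k(G,\mu)<\infty$ holds here by Kleiner's theorem, since $G$ is virtually nilpotent and $\mu$ is finitely supported and symmetric. Theorem \ref{thm:alex.gen} then produces a finite-index normal subgroup $H$ of $G$---not necessarily equal to $N$; this is the price paid---such that every $f\in H^k(G,\mu)$ is a polynomial of degree at most $k$ with respect to $H$; replacing $H$ by $H\cap N$ if necessary, one may take $H$ nilpotent. Your Steps 1--3 then go through verbatim with $H$ in place of $N$, giving $\dim H^k(G,\mu)=\dim P^k(H)-\dim P^{k-2}(H)$. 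Finally, since $H\cap N$ has finite index in both $H$ and $N$, Proposition \ref{prop:restriction} gives $\dim P^j(H)=\dim P^j(H\cap N)=\dim P^j(N)$ for every $j$, which converts this into the stated formula. In short: your skeleton is the right one, but the kernel identification must be fed by Theorem \ref{thm:alex.gen} (plus Kleiner) and the dimension transfer of Proposition \ref{prop:restriction}, not by Theorem \ref{thm:alex.cosets}.
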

\begin{remark}
Some of the material we use to prove Theorem \ref{thm:dim} originally appeared in an early version of the third author's paper \cite{tointon}, in which the bound $\dim H^k(G,\mu)\ge\dim P^k(N)-\dim P^{k-1}(N)$ was obtained (the notation here differs slightly from the notation there).
\end{remark}
\begin{remark}
The space $P^k(G)$ is finite-dimensional for an arbitrary finitely generated group $G$ \cite[Proposition 1.15]{leibman}, and so the quantity $\dim P^k(N)-\dim P^{k-2}(N)$ appearing in Theorem \ref{thm:dim} is well defined.
\end{remark}

%It has been known for some time, since the solution to Yau's conjecture by Colding  \& Minicozzi \cite{col-min}, that if $G$ is virtually nilpotent then $\dim H^k(G,\mu)<\infty$ for every $k$. More recently, Kleiner \cite{kleiner} has shown that this fact follows directly from the polynomial volume growth of $G$, leading to a new proof of Gromov's theorem that a group of polynomial growth is necessarily virtually nilpotent.
%
%Motivated by Kleiner's work, the first and last authors considered in \cite{mey-yad} the question of whether the assumption that $\dim H^k(G,\mu)<\infty$ for some $k\ge1$ is, by itself, enough to imply that $G$ is virtually nilpotent, and showed that the answer is affirmative when $G$ is virtually solvable or linear.
%Using this fact, we are able to generalise some of our conclusions to cover an arbitrary group $G$ satisfying $\dim H^k(G,\mu)<\infty$ for some $k$, as follows. We provide a detailed proof in Subsection \ref{subsec:gg}.

In conjunction with the results of \cite{mey-yad}, this gives the following, which we prove in Subsection \ref{subsec:gg}.
\begin{corollary}\label{cor:dim.indep.of.mu}
Let $G$ be a finitely generated group and let $\mu,\nu$ be two probability measures on $G$ that satisfy the standing assumptions. Suppose that $\dim H^k(G,\mu) < \infty$ and $\dim H^k(G,\nu) < \infty$. Then
$$ \dim H^k(G,\mu) =  \dim H^k(G,\nu).$$
\end{corollary}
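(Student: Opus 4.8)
The plan is to sidestep the (unproven) implication that finite-dimensionality of $H^k$ forces $G$ itself to be virtually nilpotent, by instead passing to a virtually nilpotent \emph{quotient} on which Theorem \ref{thm:dim} applies. The essential point is that the formula furnished by Theorem \ref{thm:dim} does not mention the measure.

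First I would apply Theorem \ref{thm:alex.gen} to $\mu$ and to $\nu$ separately, obtaining finite-index normal subgroups $K_\mu, K_\nu \trianglelefteq G$ such that every $f \in H^k(G,\mu)$ factors through $G/(K_\mu)_{k+1}$ and is a polynomial of degree at most $k$ with respect to $K_\mu$, and symmetrically for $\nu$. Put $K := K_\mu \cap K_\nu$, which is again finite-index and normal. Since a polynomial with respect to $K_\mu$ is \emph{a fortiori} a polynomial with respect to the smaller group $K$, and since $K_{k+1} \le (K_\mu)_{k+1}$ forces any function factoring through $G/(K_\mu)_{k+1}$ to factor through $G/K_{k+1}$, I conclude that both $H^k(G,\mu)$ and $H^k(G,\nu)$ lie in the single measure-independent space $V$ of functions on $G$ that factor through $\bar G := G/K_{k+1}$ and are polynomials of degree at most $k$ with respect to $K$.

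The crucial observation is that $\bar G$ is virtually nilpotent: its finite-index subgroup $\bar K := K/K_{k+1}$ has trivial $(k+1)$-th lower central term, being the image of $K_{k+1}$, so $\bar K$ is nilpotent of step at most $k$. I would then push $\mu$ and $\nu$ forward to $\bar\mu, \bar\nu$ on $\bar G$ and check that the standing assumptions survive (symmetry and adaptedness are immediate; exponential tail is inherited because the quotient map $\pi \colon G \to \bar G$ is $1$-Lipschitz). Because every element of $H^k(G,\mu)$ factors through $\bar G$, pullback along $\pi$ is a linear isomorphism $H^k(\bar G, \bar\mu) \cong H^k(G,\mu)$: harmonicity transfers since $f = \bar f \circ \pi$ gives $\Delta_\mu f = (\Delta_{\bar\mu} \bar f) \circ \pi$, and polynomial growth transfers in both directions because $|\pi(x)| \le |x|$ while every element of $\bar G$ admits a preimage of equal length (lift a geodesic word). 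The same applies to $\nu$, so $\dim H^k(G,\mu) = \dim H^k(\bar G, \bar\mu)$ and $\dim H^k(G,\nu) = \dim H^k(\bar G, \bar\nu)$.

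It therefore suffices to show $\dim H^k(\bar G,\bar\mu) = \dim H^k(\bar G,\bar\nu)$ on the fixed virtually nilpotent group $\bar G$. For a \emph{finitely supported} measure, Theorem \ref{thm:dim} evaluates this dimension as $\dim P^k(\bar K) - \dim P^{k-2}(\bar K)$, a quantity depending only on $\bar K$ and $k$. The main obstacle is to remove the finite-support hypothesis, that is, to establish that on a virtually nilpotent group the dimension of $H^k$ for a measure with merely exponential tail agrees with its value for a finitely supported one; this is precisely where the results of \cite{mey-yad} are invoked, and I expect it to be the genuine crux of the argument. Granting it, both $\dim H^k(\bar G,\bar\mu)$ and $\dim H^k(\bar G,\bar\nu)$ equal $\dim P^k(\bar K) - \dim P^{k-2}(\bar K)$, and the corollary follows.
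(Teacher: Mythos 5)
Your reduction is valid, and your overall strategy is the same as the paper's: pass to a virtually nilpotent situation in which the dimension is given by the measure-free formula of Theorem \ref{thm:dim}. The details of the reduction differ, though. The paper forms the linear quotient $\pi\colon G\to GL_n(\C)$ attached to the action of $G$ on $H^k(G,\mu)$ alone, cites \cite[Theorem 1.4]{mey-yad} to see that $\pi(G)$ is virtually nilpotent, identifies $H^k(G,\mu)$ with $H^k(\pi(G),\mu\circ\pi^{-1})$, and then treats $\nu$ \emph{asymmetrically}, via the inequality (\ref{eq:proj.measure}) plus a symmetry argument; you instead build a single quotient $\bar G=G/K_{k+1}$ with $K=K_\mu\cap K_\nu$ from Theorem \ref{thm:alex.gen} applied to both measures, and prove honest isomorphisms $H^k(\bar G,\bar\mu)\cong H^k(G,\mu)$ and $H^k(\bar G,\bar\nu)\cong H^k(G,\nu)$. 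Your verifications here (nilpotency of $K/K_{k+1}$, factorization, transfer of harmonicity, geodesic lifting for the growth bound) are correct. One caveat, which you share with the paper's own note preceding (\ref{eq:proj.measure}): preservation of the exponential-tail condition under pushforward does not follow from $1$-Lipschitzness alone if the condition is read literally as the pointwise bound $\mu(x)\le e^{-\varepsilon|x|}$, since a fiber of the quotient map may be infinite and the fiber sums can then violate the bound; it is immediate for the moment form of the condition, $\sum_x\mu(x)e^{\varepsilon|x|}<\infty$, so this is a matter of definitions rather than a flaw peculiar to your argument.

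The genuine gap is the step you explicitly defer: that Theorem \ref{thm:dim} remains valid for measures satisfying the standing assumptions that are not finitely supported. Your expectation that this is ``precisely where the results of \cite{mey-yad} are invoked'' is misplaced: \cite{mey-yad} contains no statement comparing $H^k$ for finitely supported and exponential-tail measures; its results enter this paper elsewhere (virtual nilpotence of linear groups with $\dim H^k<\infty$, and Proposition \ref{prop:dim}). What actually closes the gap --- and this is exactly how the paper argues --- is an inspection of the proof of Theorem \ref{thm:dim}: finite support is used there in only one place, namely to invoke Kleiner's theorem \cite{kleiner} and deduce $\dim H^k<\infty$. In your situation that finiteness is already available, since $\dim H^k(\bar G,\bar\mu)=\dim H^k(G,\mu)<\infty$ by your isomorphism, and likewise for $\nu$; all remaining ingredients of the proof of Theorem \ref{thm:dim} (Theorem \ref{thm:alex.gen}, Proposition \ref{prop:dim}, Theorem \ref{thm:lap_poly_img}, Proposition \ref{prop:restriction}) require only the standing assumptions. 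With that observation supplied, both $\dim H^k(\bar G,\bar\mu)$ and $\dim H^k(\bar G,\bar\nu)$ equal $\dim P^k(\bar K)-\dim P^{k-2}(\bar K)$ with $\bar K=K/K_{k+1}$, and your proof is complete. As written, however, the decisive step is assumed rather than proved, and the source you propose for it would not supply it.
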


It is possible to obtain a much more explicit expression for $\dim P^k(G)$. Polynomials on an arbitrary group have a more explicit description in terms of certain coordinate systems. We review this equivalent description in Section \ref{sec:coords}. From this description, it follows that
the dimension of  $P^k(G)$ for a finitely generated group $G$ is an explicit function of the ranks of the abelian quotients coming from its lower central series, as follows.
\begin{prop}\label{prop:dim.of.poly}
Let $G$ be a finitely generated group,
and let $d_j$ denote the torsion-free rank of the abelian group $G_{j}/G_{j+1}$.
Then $\dim P^k(G)$ is equal to the number of non-negative integer solutions $(x_{j,t} \ : \ j=1,\ldots,k,t=1,\ldots, d_j)$ to the %equation
inequality
\begin{equation}\label{eq:integers_sols_dim}
\sum_{j=1}^k j\sum_{t=1}^{d_j}x_{j,t} \leq k.
\end{equation}
\end{prop}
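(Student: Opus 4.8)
The plan is to establish a bijection between a basis of $P^k(G)$ and the set of non-negative integer solutions to the inequality \eqref{eq:integers_sols_dim}, using the coordinate description of polynomials promised in Section \ref{sec:coords}. Although that section is not shown in this excerpt, the structure of the statement makes the intended argument clear: the group $G$ admits a coordinate system $(y_{j,t})$ adapted to its lower central series, with one coordinate $y_{j,t}$ for each $t \in \{1,\ldots,d_j\}$ at each level $j$, where $d_j = \operatorname{rank}(G_j/G_{j+1})$. The key feature of such a coordinate system is that a coordinate arising at level $j$ should be regarded as having \emph{weighted degree $j$}: moving up the lower central series costs more, because iterated commutators of generators land deeper in the series. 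Concretely, I expect that the monomials $\prod_{j,t} y_{j,t}^{x_{j,t}}$ (suitably interpreted as functions on $G$ via binomial-type coefficients in the coordinates) form a basis for the space of all polynomials, and that such a monomial is a polynomial of degree exactly $\sum_{j,t} j\,x_{j,t}$ with respect to $G$.

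First I would invoke the equivalence of the various notions of polynomial (Proposition \ref{thm:polys.equiv}) to replace Definition \ref{def:poly} with the coordinate description, so that a function on $G$ is a polynomial of degree at most $k$ precisely when it is a linear combination of coordinate monomials of weighted degree at most $k$. The central computation is then to verify that the coordinate $y_{j,t}$, or more precisely the associated binomial basis function, contributes weight $j$ rather than weight $1$ to the total degree. This is where the lower central series enters essentially: the derivative operators $\partial_u$ for $u \in G_j$ lower the weighted degree of a level-$j$ coordinate by one unit of weight $j$, reflecting the fact that $G_{j+1} = [G,G_j]$ forces $j$-fold commutators to behave like degree-$j$ objects. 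Once this weight assignment is pinned down, a monomial $\prod_{j,t} y_{j,t}^{x_{j,t}}$ has degree $\sum_{j=1}^k j \sum_{t=1}^{d_j} x_{j,t}$, and the monomials of degree at most $k$ form a basis of $P^k(G)$.

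With the basis identified, the dimension count is immediate and purely combinatorial: $\dim P^k(G)$ equals the number of tuples $(x_{j,t})$ of non-negative integers, with $j$ ranging over $1,\ldots,k$ and $t$ over $1,\ldots,d_j$, whose weighted total $\sum_{j=1}^k j \sum_{t=1}^{d_j} x_{j,t}$ is at most $k$. (Note that levels $j > k$ contribute nothing, since even a single such coordinate would already exceed weight $k$, which is why the sum may be truncated at $j = k$.) This is exactly the set of solutions to \eqref{eq:integers_sols_dim}, completing the proof.

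I expect the main obstacle to be the rigorous verification that the coordinate monomials genuinely form a basis for $P^k(G)$ with the stated weighted degrees — that is, reconciling the derivative-based Definition \ref{def:poly} with the coordinate system so that both the spanning property and linear independence hold, and confirming that level-$j$ coordinates carry weight exactly $j$. The subtlety is that a naive coordinate system need not respect the filtration by the lower central series; one must choose coordinates compatible with a Mal'cev-type basis (after passing to a torsion-free nilpotent finite-index subgroup, or working directly with the torsion-free ranks $d_j$), and then track carefully how the group multiplication expresses a product $xy$ in coordinates via polynomial (indeed, weight-respecting) formulas. Granting the coordinate machinery of Section \ref{sec:coords}, however, the remainder is a clean combinatorial enumeration.
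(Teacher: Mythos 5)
Your proposal is essentially the paper's own proof: invoke Proposition \ref{thm:polys.equiv} to identify $P^k(G)$ with the span of coordinate monomials of weighted degree at most $k$, note that these monomials are linearly independent by the uniqueness of the coordinate representation \eqref{eq:coords.def}, and observe that they are in bijection with the non-negative integer solutions of \eqref{eq:integers_sols_dim}. One warning about your suggested fallback for the torsion issue: the paper does \emph{not} pass to a torsion-free finite-index subgroup --- that option is unavailable for a general finitely generated $G$, and would in any case be circular, since the comparison of $P^k$ across finite-index subgroups (Proposition \ref{prop:restriction}) is itself deduced from this very proposition --- but instead builds the coordinate system on the generalised commutator subgroups $\overline G_j$, whose quotients $\overline G_j/\overline G_{j+1}$ are torsion-free abelian of the same ranks $d_j$ by Lemma \ref{lem:tors.finite}, which is exactly your ``working directly with the torsion-free ranks'' alternative made precise.
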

In combination with Theorem \ref{thm:dim} this implies a precise computation of $\dim H^k(G,\mu)$ when $G$ is virtually nilpotent. To our knowledge, this is the first such computation that covers the cases in which $G$ is not abelian.
\begin{corollary}
If $G$ is a finitely generated group with finite-index nilpotent subgroup $N$ and $\mu$ is a finitely supported probability measure on $G$ satisfying the standing assumptions then $\dim H^k(G,\mu)$ is determined by the ranks of the abelian quotients $N_{j}/N_{j+1}$.
\end{corollary}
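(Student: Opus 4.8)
The plan is to deduce this corollary directly by combining Theorem~\ref{thm:dim} with Proposition~\ref{prop:dim.of.poly}, so that no genuinely new argument is needed beyond checking that the hypotheses line up. First I would observe that the hypotheses of the corollary are exactly those of Theorem~\ref{thm:dim}: the group $G$ is finitely generated with a finite-index nilpotent subgroup $N$, and $\mu$ is a finitely supported probability measure satisfying the standing assumptions. Applying that theorem gives
$$\dim H^k(G,\mu) = \dim P^k(N) - \dim P^{k-2}(N),$$
expressing the quantity of interest purely in terms of the dimensions of spaces of polynomials on the nilpotent group $N$.

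The second step is to apply Proposition~\ref{prop:dim.of.poly} to the finitely generated group $N$. Writing $d_j$ for the torsion-free rank of the abelian quotient $N_j/N_{j+1}$ --- which is precisely what is meant by the rank of $N_j/N_{j+1}$ --- the proposition identifies $\dim P^k(N)$ with the number of non-negative integer solutions $(x_{j,t})$ of the inequality $\sum_{j=1}^k j\sum_{t=1}^{d_j} x_{j,t}\le k$, and likewise identifies $\dim P^{k-2}(N)$ with the analogous count for $k-2$ (interpreting this as $0$ when $k<2$, in accordance with the convention $P^\ell(N)=\{0\}$ for $\ell<0$). Each of these counts is manifestly a function of $k$ and of the integers $d_1,d_2,\ldots$ alone; in particular it depends neither on $\mu$, nor on the ambient group $G$, nor on any feature of $N$ beyond the ranks $d_j$. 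Taking the difference of the two counts therefore exhibits $\dim H^k(G,\mu)$ as a function of the ranks $d_j$, which is exactly the assertion.

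There is essentially no substantive obstacle here, since the real work has already been carried out in Theorem~\ref{thm:dim} and Proposition~\ref{prop:dim.of.poly}. The only points requiring a word of care are the identification of the rank of the abelian group $N_j/N_{j+1}$ with the torsion-free rank $d_j$ appearing in Proposition~\ref{prop:dim.of.poly}, and the observation that the finiteness and finite-support hypotheses needed to invoke Theorem~\ref{thm:dim} are indeed supplied by the statement of the corollary.
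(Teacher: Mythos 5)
Your proposal is correct and is exactly the paper's argument: the corollary is stated there as an immediate consequence of combining Theorem~\ref{thm:dim} with Proposition~\ref{prop:dim.of.poly} applied to $N$, precisely as you do. Your added remarks on identifying the rank of $N_j/N_{j+1}$ with the torsion-free rank $d_j$ and on the convention $P^{\ell}(N)=\{0\}$ for $\ell<0$ are sound and require no changes.
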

The values of $P^k(G)$ implied by Proposition \ref{prop:dim.of.poly} could in principle be given in explicit form. For example, in the case where $G$ is $2$-step nilpotent, which is to say that $G_3=\{1\}$, and hence $d_3=0$, on setting $y = \sum_{t=1}^{d_2} x_{2,t}$ and summing over possible values of $y$ we obtain
\[
\dim P^k(G) = \sum_{y=0}^{\lfloor k/2 \rfloor} { d_2-1+y \choose d_2-1 } { d_1 + k-2y \choose d_1 }. 
\]

When $G$ is an abelian group of torsion-free rank $d$, Proposition \ref{prop:dim.of.poly} implies that $\dim P^k(G)$ is equal to the number of non-negative integer solutions to the equation
$\sum_{j=1}^d x_j \leq k$, which is equal to ${d+k \choose d}$. Theorem \ref{thm:dim} is therefore a generalisation of an old result of Heilbronn \cite{heilbronn}, who in the 1940s used Theorem \ref{thm:lap_poly_img} in the case that $G=\Z^d$ and $\mu$ was the uniform probability measure on the standard symmetric generating set to show that
\[
\dim H^k(\Z^d,\mu)= {d+k \choose d} - {d+k-2 \choose d}.%={d-1+k \choose d-1}+{d-2+k\choose d-1}.
\]
Hua, Jost \& X. Li-Jost \cite{hjlj} have shown that this remains true if $\Z^d$ is replaced by any abelian group having $\Z^d$ as a finite-index subgroup,
and $\mu$ is the uniform probability measure on \emph{any} finite symmetric generating set.

More generally, Theorem \ref{thm:dim} recovers and refines another relatively recent result of Hua \& Jost \cite{hj} stating that when $G$ has a finite-index nilpotent subgroup $N$ we have
\begin{equation}\label{eq:hua-jost}
\dim H^k(G,\mu) \leq C k^{D-1}
\end{equation}
for $k\ge1$ and some constant $C>0$ depending on $\mu$, where $D$ is the polynomial growth rate of $G$ (also called the \emph{homogeneous dimension} of $G$).  The Bass--Guivarc'h formula \cite{bass,guivarc'h} states that the homogeneous dimension is given by
\begin{equation}\label{eq:bg}
D=\sum_{i=1}^\ell i d_i,
\end{equation}
where $d_i$ is the torsion-free rank of the abelian quotient $N_i/N_{i+1}$. In Subsection \ref{subsec:hj} we prove the following bounds in terms of the \emph{rank} $d$ of $N$, defined by
\[
d=\sum_{i=1}^\ell d_i.
\]
\begin{corollary}\label{cor:hj}
Let $G$ be a finitely generated group with a finite-index nilpotent subgroup $N$ of rank $d$, and let $\mu$ be a finitely supported probability measure on $G$ satisfying the standing assumptions. Then there exist constants $c_d,C_d>0$ such that
\[
c_dk^{d-1} \leq \dim H^k(G,\mu) \leq C_dk^{d-1}
\]
for $k\ge1$.
\end{corollary}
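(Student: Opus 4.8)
The plan is to feed the exact formula of Theorem~\ref{thm:dim} into the lattice-point description of Proposition~\ref{prop:dim.of.poly} and then carry out an elementary count.

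By Theorem~\ref{thm:dim} we have $\dim H^k(G,\mu)=\dim P^k(N)-\dim P^{k-2}(N)$. Since $N$ is nilpotent of some step $\ell$, its ranks satisfy $d_j=0$ for $j>\ell$, so Proposition~\ref{prop:dim.of.poly} identifies $\dim P^k(N)$ with $p(k):=\#\{x\in\Z_{\ge0}^d:\sum_{i=1}^dw_ix_i\le k\}$, where $d=\sum_{j=1}^\ell d_j$ is the rank of $N$ and the weight vector $(w_1,\dots,w_d)$ lists the integer $j$ with multiplicity $d_j$ for $j=1,\dots,\ell$. Because $\sum_iw_ix_i$ is always an integer, the difference telescopes to
\[
\dim H^k(G,\mu)=p(k)-p(k-2)=r(k)+r(k-1),\qquad r(m):=\#\Big\{x\in\Z_{\ge0}^d:\sum_{i=1}^dw_ix_i=m\Big\}.
\]

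The crux is to show $r(m)=\Theta(m^{d-1})$. One should resist estimating $p(k)$ directly by the volume of the simplex $\{x\ge0:\sum_iw_ix_i\le k\}$, since that volume is of order $k^d$ and the attendant $O(k^{d-1})$ lattice-point error would swamp the order-$k^{d-1}$ quantity we are after. Instead I would exploit the presence of a weight-one coordinate. As $N$ has rank $d\ge1$ it is infinite, and a finitely generated infinite nilpotent group has infinite abelianization, so $d_1\ge1$ and some $w_i$ equals $1$; relabel it $w_1$. Given any $(x_2,\dots,x_d)\in\Z_{\ge0}^{d-1}$ with $\sum_{i\ge2}w_ix_i\le m$, the value $x_1=m-\sum_{i\ge2}w_ix_i$ is the unique non-negative integer completing it to a solution of $\sum_iw_ix_i=m$, and conversely. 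Hence
\[
r(m)=\#\Big\{(x_2,\dots,x_d)\in\Z_{\ge0}^{d-1}:\sum_{i=2}^dw_ix_i\le m\Big\}.
\]

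This is now a count of lattice points in a genuine $(d-1)$-dimensional simplex, whose leading term really is of order $m^{d-1}$, so a routine volume sandwich (bounding the count between the volumes of slightly shrunk and enlarged copies via disjoint unit cubes) gives $r(m)=\frac{m^{d-1}}{(d-1)!\prod_{i\ge2}w_i}+O(m^{d-2})$, and in particular $c'm^{d-1}\le r(m)\le C'm^{d-1}$ for $m\ge1$. Summing the estimates for $r(k)$ and $r(k-1)$ yields $c_dk^{d-1}\le\dim H^k(G,\mu)\le C_dk^{d-1}$, as required. The only point needing real care is the existence of the weight-one coordinate, namely that $d\ge1$ forces $d_1\ge1$; this follows from the standard fact that a finitely generated nilpotent group with finite abelianization is finite.
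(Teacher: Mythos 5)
Your argument is correct in its main thrust and takes a genuinely different route from the paper's. Both proofs start from Theorem \ref{thm:dim} and the weighted-monomial description of $\dim P^k(N)$ (Proposition \ref{prop:dim.of.poly}), and both hinge on the existence of a weight-one coordinate (your ``$d_1\ge1$'' is exactly the paper's ``$\sigma(1)=1$''). From there you argue geometrically: you telescope $p(k)-p(k-2)$ into the exact-level counts $r(k)+r(k-1)$, use the weight-one coordinate to identify $r(m)$ with the number of lattice points in a $(d-1)$-dimensional weighted simplex, and finish with a volume sandwich. The paper instead argues combinatorially: Lemma \ref{lem:hom} provides explicit injections between the monomial bases of the weighted-homogeneous spaces $\overline P^k$, anchored to the exact unweighted count $\dim P^k(\Z^d)-\dim P^{k-1}(\Z^d)=\binom{d+k-1}{k}$ of \eqref{eq:hom.polys'}. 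Your telescoping observation plays the same structural role as the paper's passage to homogeneous pieces; what your route buys is the explicit asymptotic constant $1/\bigl((d-1)!\prod_{i\ge2}w_i\bigr)$, while the paper's route avoids any lattice-point analysis.

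There is, however, one gap: the constants $c_d,C_d$ in the statement are meant to depend on $d$ alone, uniformly over all groups with a rank-$d$ nilpotent subgroup --- this is precisely why the paper's proof invokes the facts $\sigma(i)\le\ell$ and $\ell\le d$ for an $\ell$-step nilpotent group of rank $d$. Your sandwich constants involve $\prod_{i\ge2}w_i$ and $\sum_{i\ge2}w_i$, so a priori they depend on the whole weight vector of $N$, and nothing in your argument bounds the weights; contrary to your closing remark, the weight-one coordinate is not ``the only point needing real care''. The missing ingredient is that every weight satisfies $w_i\le d$. This follows from the fact that $d_j=0$ forces $d_{j+1}=0$: if $N_j/N_{j+1}$ is finite of order $n$, then the bilinearity of the commutator map modulo $N_{j+2}$ (as in the proof of Lemma \ref{lem:comm.multilin}) gives $[x,y]^n\equiv[x^n,y]\equiv1 \pmod{N_{j+2}}$ for $x\in N_j$, $y\in N$, so $N_{j+1}/N_{j+2}$ is a finitely generated abelian group generated by torsion elements, hence finite. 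Consequently $d_j\ge1$ for every $j$ up to the largest index $J$ with $d_J\ne0$, whence $J\le\sum_jd_j=d$ and all weights are at most $d$. With this, $\prod_{i\ge2}w_i\le d^{d-1}$ and $\sum_{i\ge2}w_i\le d(d-1)$, so your constants become functions of $d$ alone and the proof is complete.
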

In particular, this implies the following.
\begin{corollary}
The bound (\ref{eq:hua-jost}) is sharp as $k\to\infty$ if and only if $G$ is virtually abelian.
\end{corollary}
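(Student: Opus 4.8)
The statement to prove is that the Hua--Jost bound \eqref{eq:hua-jost}, namely $\dim H^k(G,\mu)\le Ck^{D-1}$, is sharp as $k\to\infty$ precisely when $G$ is virtually abelian. Here "sharp" should mean that $\dim H^k(G,\mu)$ genuinely grows like $k^{D-1}$ (that is, $\dim H^k(G,\mu)\ge c k^{D-1}$ for some $c>0$ and all large $k$), as opposed to being of strictly smaller polynomial order. The plan is to combine Corollary \ref{cor:hj}, which pins the growth rate of $\dim H^k(G,\mu)$ to $k^{d-1}$ where $d=\sum_i d_i$ is the rank of $N$, with the Bass--Guivarc'h formula \eqref{eq:bg}, which gives the exponent in \eqref{eq:hua-jost} as $D-1$ where $D=\sum_i i\,d_i$.

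First I would make precise the notion of sharpness: Corollary \ref{cor:hj} already gives the two-sided bound $c_d k^{d-1}\le\dim H^k(G,\mu)\le C_d k^{d-1}$, so $\dim H^k(G,\mu)=\Theta(k^{d-1})$. The Hua--Jost upper bound is $O(k^{D-1})$. Thus \eqref{eq:hua-jost} is attained (up to constants) if and only if the two exponents coincide, i.e. $d-1=D-1$, equivalently $d=D$. If $d<D$ then $\dim H^k(G,\mu)=\Theta(k^{d-1})=o(k^{D-1})$, so the Hua--Jost bound is strictly lossy and not sharp.

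The core of the argument is therefore the purely arithmetic equivalence $d=D\iff G$ is virtually abelian. Using \eqref{eq:bg} and the definition of $d$, we have
\[
D-d=\sum_{i=1}^\ell i\,d_i-\sum_{i=1}^\ell d_i=\sum_{i=1}^\ell (i-1)d_i=\sum_{i=2}^\ell (i-1)d_i.
\]
Since each $d_i\ge0$ and each coefficient $(i-1)$ is strictly positive for $i\ge2$, this sum is nonnegative and vanishes if and only if $d_i=0$ for every $i\ge2$. I would then interpret this condition group-theoretically: $d_i$ is the torsion-free rank of $N_i/N_{i+1}$, so $d_i=0$ for all $i\ge2$ means that $N_i/N_{i+1}$ is finite for every $i\ge2$, while $N_i/N_{i+1}=\{1\}$ for $i>\ell$. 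Since $N$ is finitely generated nilpotent, this forces $N_2=[N,N]$ to be finite, and hence $N$ is virtually abelian; as $N$ has finite index in $G$, so is $G$. Conversely, if $G$ is virtually abelian then its finite-index nilpotent subgroup $N$ has finite commutator subgroup, giving $d_i=0$ for $i\ge2$ and thus $d=D$.

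The main obstacle I anticipate is not the arithmetic, which is immediate, but the group-theoretic step translating $d_i=0$ for $i\ge2$ into virtual abelianness. The subtlety is that vanishing torsion-free ranks only give that the successive quotients $N_i/N_{i+1}$ are \emph{finite}, not trivial, so one must argue that finitely many finite quotients assemble into a finite commutator subgroup. This follows because $N_2$ admits a finite filtration $N_2=N_2\ge N_3\ge\cdots\ge N_{\ell+1}=\{1\}$ with all factors $N_i/N_{i+1}$ $(i\ge2)$ finite, whence $N_2$ itself is finite; one should note that finite generation of $N$ guarantees finite generation of each $N_i/N_{i+1}$, so "torsion-free rank zero" genuinely means "finite" here. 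With that established, $N/N_2$ is a finitely generated abelian group of finite index over a finite-index abelian subgroup of $N$, completing the equivalence and hence the corollary.
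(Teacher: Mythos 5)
Your proposal is correct and is essentially the paper's own (implicit) argument: the paper deduces this corollary directly from Corollary \ref{cor:hj} and the Bass--Guivarc'h formula \eqref{eq:bg}, exactly as you do, by observing that $D-d=\sum_{i\ge2}(i-1)d_i$ vanishes precisely when $d_i=0$ for all $i\ge2$, which is in turn equivalent to virtual abelianness. The one soft spot is your closing sentence: the passage from ``$[N,N]$ is finite'' to ``$N$ is virtually abelian'' is asserted rather than proved (and the sentence justifying it does not quite parse); it is the classical fact that a finitely generated group with finite derived subgroup has centre of finite index (the conjugacy class of each generator $g$ lies in $g[N,N]$, so each generator's centralizer has finite index, and the centre is the intersection of finitely many such centralizers), while for the converse direction --- virtually abelian implies $d_i=0$ for $i\ge2$ --- the cleanest route within the paper is Lemma \ref{lem:gen.com.fin.ind} applied to a finite-index abelian subgroup of $N$.
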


\subsection{More corollaries}
If $N$ is a nilpotent group then it is well known that the set $t(N)$ of elements of finite order in $N$ forms a normal subgroup, called the \emph{torsion subgroup} of $N$,
and that $N/t(N)$ is then a torsion-free nilpotent group \cite[\S5.2]{Robinson}.
We prove the following corollary in Subsection \ref{subsec:kernel}.
\begin{corollary}\label{cor:kernel}
Let $G$ be a finitely generated group with a nilpotent subgroup $N$ of finite index such that $N/t(N)$ is of nilpotency class exactly $\ell$.
Let $\mu$ be a finitely supported probability measure on $G$ satisfying the standing assumptions. Let $k\in\N$, and let $K$ be the kernel of the natural action of $G$ on $H^k(G,\mu)$. Then $K$ is finite if and only if $k\ge \ell$.
\end{corollary}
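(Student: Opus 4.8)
The plan is to prove the two implications separately. Throughout I use that $K$ is the kernel of the $G$-representation on the finite-dimensional space $H^k(G,\mu)$ (finite-dimensional because $G$ is virtually nilpotent), that left translation commutes with $\Delta_\mu$ so that each operator $\partial_u$ maps $H^k(G,\mu)$ into itself, and that $g\in K$ if and only if $\partial_g f=0$ for every $f\in H^k(G,\mu)$. For the implication ``$K$ finite $\Rightarrow k\ge\ell$'' I would argue the contrapositive. Suppose $k<\ell$. By Theorem \ref{thm:alex.cosets}, $N_{k+1}$ acts trivially on every $f\in H^k(G,\mu)$, so $N_{k+1}\subseteq K$. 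Since $k+1\le\ell$ we have $N_\ell\subseteq N_{k+1}$, and since $N/t(N)$ has class exactly $\ell$ the quotient $N_\ell t(N)/t(N)=(N/t(N))_\ell$ is non-trivial; hence $N_\ell$ contains an element of infinite order and is infinite, whence $K$ is infinite. This direction is routine.

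For the converse I would first make three reductions. As $H^\ell(G,\mu)\subseteq H^k(G,\mu)$ for $k\ge\ell$, the kernel for $H^k$ is contained in that for $H^\ell$, so it suffices to treat $k=\ell$. Since $K$ does not depend on the choice of $N$, I may replace $N$ by its normal core and assume $N\trianglelefteq G$; then every $N_j$ and $t(N)$ is characteristic in $N$, hence normal in $G$. Finally, writing $\pi\colon G\to G/t(N)$, every $\bar\mu$-harmonic function on $G/t(N)$ pulls back to a $\mu$-harmonic function on $G$ of the same degree, so $\pi(K)$ lies in the analogous kernel $K'$ over $G/t(N)$ while $\ker(\pi|_K)\subseteq t(N)$ is finite; as $G/t(N)$ retains the finite-index torsion-free nilpotent subgroup $N/t(N)$, still of class $\ell$, it is enough to prove $K'$ finite. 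Thus I may assume in addition that $N$ is torsion-free, so each $N_j$ is torsion-free; in particular $N_\ell$ is free abelian of rank $d_\ell$.

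The heart of the argument is then to detect the top layer $N_\ell$. Consider the linear map
\[
\Phi\colon H^\ell(G,\mu)\longrightarrow \Hom(N_\ell,\C),\qquad \Phi_f(u)=\partial_u f\quad(u\in N_\ell),
\]
which is well defined because for $u\in N_\ell$ the derivative $\partial_u f$ lowers polynomial degree by $\ell$ and stays harmonic, hence is a constant, and $u\mapsto\partial_uf$ is a homomorphism since $N_\ell$ is central in $N$ and $G$ fixes constants. Its kernel consists of the $f$ that are left $N_\ell$-invariant, i.e. that factor through $G/N_\ell$, and these are exactly the pullbacks of $H^\ell(G/N_\ell,\bar\mu)$. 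Applying Theorem \ref{thm:dim} to $G$ and to $G/N_\ell$, together with Proposition \ref{prop:dim.of.poly} and the fact that the coordinate count for $N/N_\ell$ omits the weight-$\ell$ coordinates, I note that the only monomials of weighted degree at most $\ell$ involving a weight-$\ell$ coordinate are the $d_\ell$ coordinates themselves, and none occur in weighted degree at most $\ell-2$; hence
\[
\dim H^\ell(G,\mu)-\dim H^\ell(G/N_\ell,\bar\mu)=d_\ell=\dim\Hom(N_\ell,\C),
\]
so $\Phi$ is surjective. Consequently, given $u\in N_\ell\setminus\{1\}$ I may choose $\chi\in\Hom(N_\ell,\C)$ with $\chi(u)\ne0$ and then $f$ with $\Phi_f=\chi$, so that $\partial_uf\ne0$ and $u\notin K$.

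To finish, take $g\in K\cap N$ with $g\ne1$ and induct on $\ell$ (the base case $\ell=0$ being trivial, as then $N=\{1\}$ and $G$ is finite). Applying the corollary inductively to $G/N_\ell$, whose finite-index nilpotent subgroup $N/N_\ell$ has class $\ell-1$ modulo torsion and for which $k=\ell\ge\ell-1$, the kernel $K'$ there is finite, so the image of $g$ has finite order; as $N$ is torsion-free this forces $g^m\in N_\ell\setminus\{1\}$ for some $m\ge1$. But $g^m\in K$, contradicting the surjectivity of $\Phi$. Hence $K\cap N=\{1\}$ and $K$ is finite. The main obstacle is precisely the surjectivity of $\Phi$: one must know that harmonic functions of polynomial growth genuinely see the central layer $N_\ell$ rather than only the abelianization, and homomorphisms alone do not suffice (the infinite dihedral group, for example, has no nonzero homomorphism to $\C$, yet its degree-one harmonic functions still detect its infinite cyclic subgroup). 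I would resolve this not by an explicit construction but through the exact dimension identity above, which is where Theorem \ref{thm:dim} and Proposition \ref{prop:dim.of.poly} are essential; the remaining delicate points—identifying $\ker\Phi$ with the harmonic functions on $G/N_\ell$ and the torsion/isolator bookkeeping—are handled by the reductions and by passing to a power of $g$.
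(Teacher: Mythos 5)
Your proof is correct, but it takes a genuinely different route from the paper's. The paper first reduces to $G=N$ via the restriction bijection of Proposition \ref{prop:dim}, and then shows directly that $K\subseteq t(N)$ by an explicit construction: given $y\notin t(N)$, it takes the first coordinate $j$ with $y_j\neq 0$, notes that $q(x)=x_j$ has degree $\sigma(j)\le\ell$ and that $\Delta q$ depends only on $x_1,\ldots,x_{j-1}$, and then uses Proposition \ref{prop:heil.surj} to produce $r$ depending only on those coordinates with $\Delta r=\Delta q$; the harmonic polynomial $p=q-r$ has degree at most $\ell$ and satisfies $p(yx)=p(x)+y_j$, so $y\notin K$. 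This treats all non-torsion elements at once, with no induction on the class and no dimension count. You instead detect only the top layer $N_\ell$: you prove surjectivity of $\Phi\colon H^\ell(G,\mu)\to\Hom(N_\ell,\C)$ via the exact dimension identity coming from Theorem \ref{thm:dim} and Proposition \ref{prop:dim.of.poly} (your count of the weight-$\ell$ monomials is correct), and then push a general nontrivial element of $K\cap N$ into $N_\ell$ by induction on $\ell$ applied to $G/N_\ell$. Both arguments ultimately rest on surjectivity of the Laplacian: the paper invokes Proposition \ref{prop:heil.surj} directly, while you invoke it through Theorem \ref{thm:dim}. The paper's version is shorter and gives the sharper conclusion $K\subseteq t(N)$ in one stroke; yours is more structural and avoids coordinates at the level of the corollary, at the cost of the quotient reductions and the induction. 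One step you should spell out: the assertion that $\partial_u f$ is a constant \emph{on all of $G$} (rather than merely on each coset of $N$, which is all the degree-lowering argument of Proposition \ref{prop:comm.deriv} gives) requires harmonicity together with the maximum principle, exactly as in the proof of Proposition \ref{prop:alex_cosets}; this global constancy is what you actually use both for the homomorphism property of $\Phi_f$ and for identifying $\ker\Phi$ with the pullbacks of $H^\ell(G/N_\ell,\bar\mu)$.
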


In \cite{mey-yad}, the  first and last authors discussed the conjecture that $\dim H^k(G,\mu)< \infty$ implies that $G$ is virtually nilpotent. Corollary \ref{cor:dim.indep.of.mu} implies that a consequence of this conjecture would be that $\dim H^k(G,\mu)$ was independent of $\mu$ for every group $G$ and every finitely supported measure $\mu$ satisfying the standing assumptions. This would also follow from the following essentially weaker conjecture.
\begin{conjecture}
Let $G$ be a finitely generated group. Then either $\dim H^k(G,\mu)$ is infinite for all probability measures $\mu$ satisfying the standing assumptions, or it is finite for all such measures.
\end{conjecture}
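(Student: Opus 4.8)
Since this statement is a conjecture, what follows is a plan of attack rather than a complete argument; I will indicate at the end where I expect the genuine difficulty to lie. The first thing to observe is that the seminorm $\|\cdot\|_k$, and hence the ambient space $V_k$ of functions of polynomial growth of degree at most $k$, depends only on the word metric and not on $\mu$. Every space $H^k(G,\mu)$ is thus realised as the kernel $\ker(\Delta_\mu|_{V_k})$ of a different operator restricted to one fixed space, and the conjecture asserts that finiteness of the dimension of this kernel is insensitive to the choice of admissible $\mu$. Together with Corollary \ref{cor:dim.indep.of.mu}, which already controls the value of the dimension whenever it is finite, the conjecture would yield complete $\mu$-independence of $\dim H^k(G,\mu)$.

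The most direct route is to deduce the dichotomy from the stronger conjecture of \cite{mey-yad}, that $\dim H^k(G,\mu)<\infty$ implies $G$ is virtually nilpotent. One half of the dichotomy is in any case unconditional: if $G$ is virtually nilpotent then $\dim H^k(G,\mu)<\infty$ for every admissible $\mu$, which is exactly the well-known fact invoked in the proof of Theorem \ref{thm:alex.gen} (with the finite value then supplied, at least for finitely supported $\mu$, by Theorem \ref{thm:dim} and Proposition \ref{prop:dim.of.poly}). Granting the Meyerovitch--Yadin conjecture for each admissible $\mu$ and taking contrapositives, a group that is not virtually nilpotent would satisfy $\dim H^k(G,\mu)=\infty$ for every admissible $\mu$. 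Combining the two cases, the dichotomy would follow, with finiteness equivalent to virtual nilpotency of $G$ --- manifestly a property of $G$ alone. Since \cite{mey-yad} establishes this stronger statement for virtually solvable and for linear groups, I would first record that the dichotomy holds unconditionally for those classes.

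For a general group I would try to isolate the $\mu$-dependence more precisely. Theorem \ref{thm:lap_poly_img} gives $\Delta_\mu P^k(G)=P^{k-2}(G)$ for every admissible $\mu$, so the space of harmonic polynomials of degree at most $k$ always has dimension exactly $\dim P^k(G)-\dim P^{k-2}(G)$, independently of $\mu$. The entire $\mu$-dependence of $\dim H^k(G,\mu)$ is therefore concentrated in the possible presence of polynomial-growth harmonic functions that are \emph{not} polynomials, and the conjecture is equivalent to the claim that the existence of an infinite-dimensional space of such non-polynomial harmonic functions is a $\mu$-independent phenomenon. The main obstacle is precisely this: there is at present no measure-independent criterion, expressible purely in terms of the large-scale geometry of $G$, that detects finiteness of $\dim H^k(G,\mu)$. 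Producing one would seem to require a quantitative dimension bound in the spirit of the arguments of Kleiner and of Colding--Minicozzi, engineered so that the implied constants depend only on the volume-growth function of $G$ and on the exponential-tail parameter of $\mu$ rather than on the fine structure of $\mu$; absent such a tool, the only known way to rule out infinitely many non-polynomial harmonic functions passes through the structure theory behind \cite{mey-yad}, which is why progress on the dichotomy is for now tied to progress on the full Meyerovitch--Yadin conjecture.
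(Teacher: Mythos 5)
The statement you were asked to prove is one of the paper's open conjectures: the paper itself offers no proof, only the surrounding remark that the dichotomy would follow from the conjecture of Meyerovitch and Yadin that $\dim H^k(G,\mu)<\infty$ forces $G$ to be virtually nilpotent. Your proposal is therefore correctly calibrated: you do not claim a proof, and your main reduction --- deduce the dichotomy from the Meyerovitch--Yadin conjecture, note that it consequently holds unconditionally for virtually solvable and linear groups by \cite{mey-yad}, and locate the genuine difficulty in the absence of a measure-independent finiteness criterion --- is exactly the paper's own framing of why this conjecture is ``essentially weaker.'' Your observation via Theorem \ref{thm:lap_poly_img} that the $\mu$-dependence of $\dim H^k(G,\mu)$ is concentrated entirely in the possible existence of non-polynomial harmonic functions of polynomial growth is a nice additional point that the paper does not make explicitly.

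One caveat: your claim that ``one half of the dichotomy is in any case unconditional'' --- that virtual nilpotency implies $\dim H^k(G,\mu)<\infty$ for \emph{every} admissible $\mu$ --- is not fully covered by what you cite. The ``well-known fact'' is Kleiner's theorem, which the paper invokes (see the remark following the proof of Theorem \ref{thm:dim}) only for finitely supported symmetric measures; in Theorem \ref{thm:alex.gen} finiteness of $\dim H^k(G,\mu)$ is a hypothesis, not a conclusion, and in Corollary \ref{cor:dim.indep.of.mu} finiteness for both measures is likewise assumed rather than derived. Since the standing assumptions permit measures with merely exponential tails, the finiteness of $\dim H^k(G,\mu)$ on a virtually nilpotent group for such $\mu$ requires an argument beyond what the paper supplies --- presumably an extension of Kleiner-type dimension bounds, or of the methods of \cite{bdky}, to this wider class of measures. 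So even your conditional reduction of the dichotomy to the Meyerovitch--Yadin conjecture has a small hole as written; apart from this, your assessment of where the genuine difficulty lies is accurate and consistent with the paper's discussion.
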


\subsection{Acknowledgements}
The authors are grateful to Emmanuel Breuillard for helpful conversations. T.M.  would like to acknowledge funding from the People Programme (Marie Curie Actions) of the European Union's Seventh Framework Programme (FP7/2007-2013) under REA grant agreement no. 333598,
and from the Israel Science Foundation (grant no. 626/14). 
I.P.  is supported by a Negev Fellowship from the Kreitman School of Ben-Gurion University of the Negev.
M.T.  is supported by ERC grant GA617129 `GeTeMo'.
A.Y and I.P.  are also partially supported by 
Grant no.\ 2010357 from the United States--Israel
Binational Science Foundation (BSF).

\section{Polynomials on groups}

\subsection{Basic facts about polynomials}
We record some basic properties of polynomials on groups, some of which can be found in  \cite{leibman}.
The group $G$ acts on functions by left and right translation: for $x,y\in G$ and $f:G\to \C$, left translation is defined by
$L_x f(y)=xf(y)=f(x^{-1} y)$, and right translation by $R_xf(y)=f^x(y)=f(yx^{-1})$. Define the {\em left derivative} by
$\partial_x f=x^{-1}f-f$, and the {\em right derivative} $\p^x f=f^{x^{-1}}-f$. Note that the left and right actions of $G$ on $\C^G$ commute, and that left differentiation and right differentiation therefore commute in the sense that $\p_x \p^y= \p^y \p_x$ for all $x,y\in G$.
We use this property extensively without mention.

Leibman observed that choosing left or right derivatives in Definition \ref{def:poly} does not change the set of polynomials.
\begin{prop}[Leibman {\cite[Corollary 2.13]{leibman}}]
\label{prop:def.equiv}
Let $f:G\to \C$. For all $k\geq 1$, the following are equivalent.
\begin{enumerate}
\renewcommand{\labelenumi}{(\arabic{enumi})}
\item For every $x_1,...,x_{k+1}\in G$ we have $\p_{x_1} \cdots \p_{x_{k+1}} f = 0$ (that is,  $f\in P^k(G)$).
\item For every $x_1,...,x_{k+1}\in G$ we have $\p^{x_1} \cdots \p^{x_{k+1}} f = 0$.
\end{enumerate}
\end{prop}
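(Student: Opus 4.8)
The plan is to prove the equivalence by establishing the stronger set-theoretic statement that the class of functions satisfying (1) and the class satisfying (2) coincide, and to do this by induction on $k$. Write $P^k_L(G)=P^k(G)$ for the functions annihilated by every product of $k+1$ \emph{left} derivatives, and let $P^k_R(G)$ denote the analogous class defined with \emph{right} derivatives. Since the claim concerns a fixed $f$, it suffices to show $P^k_L(G)=P^k_R(G)$ for every $k\ge0$; the case $k\ge1$ then yields the stated equivalence. The single algebraic input driving everything is the commutation relation $\partial_x\partial^y=\partial^y\partial_x$ recorded just above, together with the fact that the two classes are each closed under both left and right translation (an immediate consequence of that commutation).

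For the base case $k=0$, note that $\partial_u f=0$ for all $u\in G$ forces $f(ux)=f(x)$ for all $u,x$, and evaluating at $x=1$ shows $f$ is constant; the same holds for right derivatives, so $P^0_L(G)=P^0_R(G)$ is exactly the space of constants. For the inductive step, assume $P^{k-1}_L(G)=P^{k-1}_R(G)$ and take $f\in P^k_L(G)$. The crucial observation is that a single right derivative lowers the left-degree: for any $v$ and any $u_1,\dots,u_k$, commutation gives $\partial_{u_1}\cdots\partial_{u_k}\partial^v f=\partial^v\bigl(\partial_{u_1}\cdots\partial_{u_k}f\bigr)$, and the inner function $\partial_{u_1}\cdots\partial_{u_k}f$ lies in $P^0_L(G)$, hence is constant, so $\partial^v$ kills it. Thus $\partial^v f\in P^{k-1}_L(G)$ for every $v$, and by the induction hypothesis $\partial^v f\in P^{k-1}_R(G)$. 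Applying any further $k$ right derivatives to $\partial^v f$ therefore gives zero, which is precisely the statement that every product of $k+1$ right derivatives annihilates $f$; that is, $f\in P^k_R(G)$. This proves $P^k_L(G)\subseteq P^k_R(G)$, and the identical argument with the roles of left and right interchanged (again using the symmetric induction hypothesis) gives the reverse inclusion.

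The step I expect to be the main obstacle is the mixed-derivative computation at the heart of the inductive step: recognising that interposing one opposite-handed derivative reduces the degree measured in the original handedness. Everything hinges on pushing the lone $\partial^v$ past the string of $\partial_{u_i}$ via commutativity and then exploiting that a top-order left derivative of an element of $P^k_L(G)$ is constant, so that the opposite derivative annihilates it. Once this is in place the induction closes cleanly and the left/right symmetry of the hypothesis supplies both inclusions; the only other point requiring care is the base case, where one must verify that degree-zero polynomials in either sense are genuinely just the constants.
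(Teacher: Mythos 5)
Your proof is correct. The key computation checks out: for $f\in P^k_L(G)$ and any $v$, commuting the single right derivative past the string of left derivatives gives $\partial_{u_1}\cdots\partial_{u_k}\partial^v f=\partial^v\bigl(\partial_{u_1}\cdots\partial_{u_k}f\bigr)$, the inner function is killed by every further left derivative and hence is constant, and $\partial^v$ annihilates constants; so $\partial^v f\in P^{k-1}_L(G)$, the induction hypothesis converts this to $P^{k-1}_R(G)$, and $f\in P^k_R(G)$ follows. The base case and the symmetry of the argument are also handled correctly (the remark about closure under translations is true but never actually used). The comparison with the paper is somewhat degenerate: the paper offers no proof at all, simply citing Leibman's Corollary 2.13, which in Leibman's work is extracted from a more elaborate theory of polynomial mappings between arbitrary groups. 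Your argument is thus genuinely more elementary and self-contained, but it is also genuinely narrower: it exploits in an essential way that the target is the abelian group $\C$, since only then are $\partial_x$ and $\partial^y$ commuting linear operators on the vector space $\C^G$ and only then is ``$\partial^v$ of a constant equals zero'' the end of the story. For group-valued polynomial maps (the setting Leibman needs, and which this paper later uses via $\poly(H,F_\bullet)$ in Section 4), the commutation fails and your induction does not go through; so your proof covers exactly the scalar-valued case stated in the proposition, which is all that is claimed here.
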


The following is an immediate consequence of Proposition \ref{prop:def.equiv} and the fact that left differentiation and the right group action commute and vice versa.
\begin{corollary}
\label{cor:P^k.G.invariant}
The space $P^k(G)$ is invariant under both the left and right group actions.
\end{corollary}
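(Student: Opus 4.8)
The plan is to exploit the two complementary characterisations of $P^k(G)$ provided by Proposition \ref{prop:def.equiv} --- one via left derivatives, the other via right derivatives --- and to pair each with whichever group action it commutes with. The guiding observation is that since the left action $L_z$ and right action $R_z$ commute, and since $\p_x = L_{x^{-1}} - \id$ while $\p^x = R_{x^{-1}} - \id$, left differentiation commutes with the right action ($\p_x R_z = R_z \p_x$) and right differentiation commutes with the left action ($\p^x L_z = L_z \p^x$).

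First I would establish right-invariance using the \emph{defining} (left-derivative) description of $P^k(G)$. Suppose $f \in P^k(G)$ and fix $z \in G$. Because each $\p_{x_i}$ commutes with $R_z$, I can slide $R_z$ through the entire product of derivatives to obtain
\[
\p_{x_1}\cdots\p_{x_{k+1}}(R_z f) = R_z\bigl(\p_{x_1}\cdots\p_{x_{k+1}} f\bigr) = R_z(0) = 0
\]
for all $x_1,\ldots,x_{k+1} \in G$, so $R_z f \in P^k(G)$.

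For left-invariance the same trick fails directly, since $\p_x$ does \emph{not} commute with $L_z$ in a non-abelian group; this is the one genuine obstacle, and it is precisely what Proposition \ref{prop:def.equiv} is for. The plan is to switch to the right-derivative characterisation: by Proposition \ref{prop:def.equiv}, $f \in P^k(G)$ is equivalent to $\p^{x_1}\cdots\p^{x_{k+1}} f = 0$ for all $x_i$. Now $\p^{x_i}$ commutes with $L_z$, so by the identical sliding argument
\[
\p^{x_1}\cdots\p^{x_{k+1}}(L_z f) = L_z\bigl(\p^{x_1}\cdots\p^{x_{k+1}} f\bigr) = 0 .
\]
Applying Proposition \ref{prop:def.equiv} once more (in the reverse direction) converts this right-derivative condition back into $L_z f \in P^k(G)$, completing the proof. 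The whole argument is thus a symmetric pair of one-line computations, with the only conceptual content being the appeal to the equivalence of the two derivative conditions to overcome the non-commutativity of left differentiation with the left action.
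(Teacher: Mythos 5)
Your proof is correct and is essentially the paper's own argument: the paper likewise deduces the corollary immediately from Proposition \ref{prop:def.equiv} together with the fact that left differentiation commutes with the right action and right differentiation commutes with the left action. You have simply written out in full the two sliding computations that the paper leaves implicit.
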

We will often make use of the following identities, each of which is readily verifiable by direct computation.
\begin{align}
\label{eq:deriv.by.prod}
\p_{xy}f &= y^{-1}\p_xf + \p_yf \\
\label{eq:deriv.by.comm}
\p_{[x,y]}f &= \p_{y}\p_{x}yxf - \p_{x}\p_{y}yxf \\
\label{eq:product.rule}
\p_x (fh) &=x^{-1}f\cdot\p_xh+\p_xf\cdot h
\end{align}

\begin{lemma}
\label{lem:deg.prod}
If $f\in P^k(G)$ and $h\in P^m(G)$ then $fh\in P^{k+m}(G)$.
\end{lemma}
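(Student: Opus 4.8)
The plan is to prove $fh \in P^{k+m}(G)$ by induction on $k+m$, using the product rule \eqref{eq:product.rule} to reduce the degree of the product after differentiating once. The base case is straightforward: if $k+m = 0$, then both $f$ and $h$ are constant (degree-$0$ polynomials are exactly the constant functions, since $\partial_u f = 0$ for all $u$ means $f(ux) = f(x)$ for all $u,x$, forcing $f$ constant), so $fh$ is constant and lies in $P^0(G)$. More generally I should set up the induction so that the inductive hypothesis applies to products of polynomials whose degrees sum to something strictly smaller than $k+m$.

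For the inductive step, fix $u \in G$ and apply the product rule:
\begin{equation}
\partial_u(fh) = u^{-1}f \cdot \partial_u h + \partial_u f \cdot h.
\end{equation}
The key observations are that $\partial_u f \in P^{k-1}(G)$ and $\partial_u h \in P^{m-1}(G)$ (differentiation lowers the degree by one, directly from Definition \ref{def:poly}), and that $u^{-1}f \in P^k(G)$ since $P^k(G)$ is invariant under the left action by Corollary \ref{cor:P^k.G.invariant}. Therefore, by the inductive hypothesis applied to the two summands, $u^{-1}f \cdot \partial_u h \in P^{k + (m-1)}(G)$ and $\partial_u f \cdot h \in P^{(k-1)+m}(G)$, so their sum $\partial_u(fh)$ lies in $P^{k+m-1}(G)$. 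Since $u$ was arbitrary, every first derivative of $fh$ is a polynomial of degree at most $k+m-1$, which is exactly the statement that $fh \in P^{k+m}(G)$: indeed, for any $u_1, \ldots, u_{k+m+1}$, the product $\partial_{u_1}\cdots\partial_{u_{k+m+1}}(fh) = \partial_{u_1}\cdots\partial_{u_{k+m}}\bigl(\partial_{u_{k+m+1}}(fh)\bigr)$ vanishes because $\partial_{u_{k+m+1}}(fh)$ has degree at most $k+m-1$.

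One technical point worth handling carefully is the convention $P^j(G) = \{0\}$ for $j < 0$, which makes the boundary cases $k = 0$ or $m = 0$ behave correctly: for instance if $k = 0$ then $\partial_u f = 0$, so the term $\partial_u f \cdot h$ vanishes and only $u^{-1}f \cdot \partial_u h$ survives, which the induction handles. I expect the main (minor) obstacle to be bookkeeping in the induction: I must phrase the inductive hypothesis as a statement quantified over \emph{all} pairs $(k', m')$ with $k' + m' < k + m$ — not just the single pair $(k,m)$ — so that I may legitimately apply it to the summands $u^{-1}f \cdot \partial_u h$ and $\partial_u f \cdot h$, whose degree-sums are $k+m-1 < k+m$ but whose individual degrees differ from $k$ and $m$. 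With the induction framed on the total degree $k+m$ this goes through cleanly.
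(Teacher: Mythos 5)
Your proof is correct and is essentially the paper's own argument: the paper proves Lemma \ref{lem:deg.prod} by exactly this induction on $k+m$, using the product rule \eqref{eq:product.rule} together with the left-invariance of $P^k(G)$ from Corollary \ref{cor:P^k.G.invariant}. You have simply written out in full the details (base case, boundary conventions, and the form of the inductive hypothesis) that the paper leaves implicit.
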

\begin{proof}
This follows from \eqref{eq:product.rule} and Corollary \ref{cor:P^k.G.invariant} by induction on $k+m$.
\end{proof}
\begin{remark}
The stronger statement that $\deg fh= \deg f+\deg h$ follows from (\ref{eq:hom.deg.prod}) and Proposition \ref{thm:polys.equiv}, below.
\end{remark}

If $N$ is a normal subgroup of $G$ and $f:G\to\C$ is such that $f(nx)=f(x)$ for every $n\in N$ then we say that $f$ \emph{factors through} the quotient $G/N$, or that $N$ \emph{acts trivially} on $f$. If $f$ factors through $G/N$ then, writing $\pi$ for the projection $G\to G/N$, there exists $\hat f:G/N\to\C$ such that $f=\hat f\circ\pi$. Note that if $f$ factors through $G/N$ then so do $\p_xf$ and $\p^xf$.
\begin{lemma}
\label{lem:normal.factor}
Let $G$ be a group with $N\lhd G$, and suppose that $f: G \to \C$ factors through $G/N$. Then, in the above notation, we have
\[
\p_{xN}\hat f=\widehat{\p_xf}
\]
for every $x\in G$. In particular, $\hat f\in P^k(G/N)$ if and only if $f \in P^k(G)$.
\end{lemma}

\begin{proof}
For every $x,y\in G$ we have $\p_{xN} \hat f(yN) = f(x y) - f(y) = \p_x f (y) = \widehat{\p_x f}(y N)$.
\end{proof}

\begin{lemma}[Leibman {\cite[Proposition $2.15$]{leibman}}]\label{lem:comm.deriv}
Let $G$ be a group and let $f \in P^k(G)$. Then $f$ factors through $G/G_{k+1}$.
\end{lemma}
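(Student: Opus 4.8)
The conclusion that $f$ factors through $G/G_{k+1}$ is equivalent to the statement that $\partial_n f = 0$ for every $n \in G_{k+1}$, since $\partial_n f(x) = f(nx) - f(x)$. The plan is to deduce this from a stronger quantitative claim, proved by induction on $m \ge 1$: for every $k$, every $f \in P^k(G)$, and every $w \in G_m$, one has $\partial_w f \in P^{k-m}(G)$. Applying this with $m = k+1$ gives $\partial_w f \in P^{-1}(G) = \{0\}$ for every $w \in G_{k+1}$, using the convention $P^j(G) = \{0\}$ for $j < 0$, which is exactly the desired conclusion. The base case $m = 1$ is immediate: a single derivative lowers the degree, since $\partial_{u_1}\cdots\partial_{u_k}\partial_g f$ is a $(k+1)$-fold derivative of $f$ and hence vanishes, so $\partial_g f \in P^{k-1}(G)$ for every $g \in G$.

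For the inductive step I would first fix $f \in P^k(G)$ and observe that the set $A = \{w \in G : \partial_w f \in P^{k-m}(G)\}$ is a subgroup of $G$. Indeed, the identity \eqref{eq:deriv.by.prod} writes $\partial_{w_1 w_2} f = w_2^{-1}\partial_{w_1} f + \partial_{w_2} f$ as the sum of a left translate of $\partial_{w_1} f$ and $\partial_{w_2} f$; since $P^{k-m}(G)$ is invariant under left translation by Corollary \ref{cor:P^k.G.invariant}, this gives closure under products, and closure under inverses follows from the same identity applied with $w_2 = w_1^{-1}$. Since $G_m = [G_1, G_{m-1}]$ is generated by the commutators $[g,h]$ with $g \in G$ and $h \in G_{m-1}$, it therefore suffices to show that each such commutator lies in $A$, that is, $\partial_{[g,h]} f \in P^{k-m}(G)$.

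The crux is then a single application of the commutator identity \eqref{eq:deriv.by.comm}, which yields $\partial_{[g,h]} f = \partial_h \partial_g(hgf) - \partial_g \partial_h(hgf)$. Here $hgf$ is a left translate of $f$ and so again lies in $P^k(G)$ by Corollary \ref{cor:P^k.G.invariant}. In the first term, $\partial_g(hgf) \in P^{k-1}(G)$ by the base case, and then applying $\partial_h$ with $h \in G_{m-1}$ and invoking the inductive hypothesis (for weight $m-1$) places it in $P^{(k-1)-(m-1)}(G) = P^{k-m}(G)$. In the second term, $\partial_h(hgf) \in P^{k-(m-1)}(G)$ by the inductive hypothesis, and a further application of $\partial_g$ lowers the degree by one, again landing in $P^{k-m}(G)$. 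Both terms thus lie in $P^{k-m}(G)$, which closes the induction.

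The step I expect to require the most care is the choice of the inductive statement itself. A direct attempt to show that the two second-order terms in \eqref{eq:deriv.by.comm} individually vanish does not succeed, because a naive application of the hypothesis kills only one of them. Tracking the degree drop quantitatively—recording that differentiating with respect to an element of weight $m$ lowers the degree by exactly $m$—is precisely what forces both terms into $P^{k-m}(G)$ and makes the argument go through.
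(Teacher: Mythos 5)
Your proof is correct, and it is worth noting that the paper itself gives no proof of this lemma at all: it is quoted from Leibman, and the closest argument actually written out in the paper is the proof of Proposition \ref{prop:comm.deriv}, the refinement in which the $G_i$ are replaced by the generalised commutator subgroups $\overline G_i$. Both your argument and that one run on the same fuel --- the identity \eqref{eq:deriv.by.comm} combined with the left-invariance of $P^k(G)$ from Corollary \ref{cor:P^k.G.invariant} --- and both establish the stronger quantitative statement that differentiating by an element of weight $m$ lowers the degree by $m$. The structural difference is the shape of the induction. The paper inducts on $k-i$ and treats an arbitrary element $c\in\overline G_i$ in one stroke, since $[x,c]\in\overline G_{i+1}$ (Lemma \ref{lem:gen.coms}) is again an arbitrary element to which the inductive hypothesis applies; but its base case $i>k$ invokes Lemma \ref{lem:fact.through.tor.free.nilp}, which rests on the very lemma you are proving, so that argument cannot be recycled as a proof of Lemma \ref{lem:comm.deriv} without circularity. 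Your upward induction on the weight $m$ avoids this, with a trivial base case $m=1$; the price is that the commutator identity applies only to genuine commutators $[g,h]$ rather than to arbitrary elements of $G_m$, which is exactly why you need the observation that $\{w\in G:\partial_wf\in P^{k-m}(G)\}$ is a subgroup (via \eqref{eq:deriv.by.prod}, left-invariance and linearity of $P^{k-m}(G)$) in order to pass from the generating commutators of $[G,G_{m-1}]$ to all of $G_m$. That subgroup step is stated and used correctly, the degree bookkeeping in the two terms $\partial_h\partial_g(hgf)\in P^{(k-1)-(m-1)}(G)$ and $\partial_g\partial_h(hgf)\in P^{(k-(m-1))-1}(G)$ is right, and the convention $P^j(G)=\{0\}$ for $j<0$ handles the endgame, so your argument is a complete, self-contained and non-circular proof.
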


\subsection{Growth of polynomials}

The main purpose of this subsection is to prove the following result.
\begin{prop}\label{prop:growth.of.polys}
Let $f:G\to\C$ be a polynomial, and let $k$ be a non-negative integer. If $f$ is of degree at most $k$ then $|| f ||_k < \infty$. Conversely, if
$|| f ||_{k+1} = 0$ then $f$ is of degree at most $k$.
\end{prop}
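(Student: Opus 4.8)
The plan is to prove the two implications separately, handling the forward direction by induction on the degree and the converse by a contrapositive growth-detection argument.

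\medskip

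For the implication ``$\deg f\le k\Rightarrow\|f\|_k<\infty$'' I would induct on $k$. The base case $k=0$ is immediate, since a polynomial of degree $0$ satisfies $\partial_u f=0$ for all $u$ and is therefore constant, hence bounded. For the inductive step, fix a finite symmetric generating set $S$. For each $s\in S$ the right derivative $\partial^s f$ is again a polynomial, and by Proposition \ref{prop:def.equiv} it has degree at most $k-1$ (applying $k$ further right derivatives to $\partial^s f$ amounts to applying $k+1$ right derivatives to $f$, which vanish). By the inductive hypothesis there is a constant $C$, uniform over the finite set $S$, with $|\partial^s f(y)|\le C(|y|^{k-1}+1)$ for all $y$ and all $s\in S$. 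Writing a geodesic $x=s_1\cdots s_n$ with $n=|x|$ and telescoping gives $f(x)-f(1)=\sum_{i=1}^{n}\partial^{s_i}f(s_1\cdots s_{i-1})$; since each prefix has length at most $i-1$, this yields $|f(x)|\le |f(1)|+\sum_{i=1}^{n}C((i-1)^{k-1}+1)\le C'|x|^{k}$, so $\|f\|_k<\infty$.

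\medskip

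For the converse I would argue the contrapositive: if $f$ is a polynomial of degree $d$ with $d\ge k+1$ then $\|f\|_{k+1}\ne0$. Since $\|f\|_{k+1}\ge\|f\|_d$ whenever $k+1\le d$ (because $r^{-(k+1)}\ge r^{-d}$ for $r\ge1$), it suffices to show $\|f\|_d>0$, i.e.\ that $f$ genuinely grows like $|x|^{d}$ along some sequence. As $\deg f=d$, any $d$-fold derivative of $f$ is a polynomial of degree $\le0$, hence a constant, and for a suitable choice $v_1,\ldots,v_d$ the constant $\partial_{v_1}\cdots\partial_{v_d}f$ is nonzero; in particular $\partial_{v_1}\cdots\partial_{v_d}f(1)\ne0$. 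Set $u_i:=v_{d+1-i}$ and consider $F:\Z^d\to\C$ defined by $F(\mathbf n)=f(u_1^{n_1}\cdots u_d^{n_d})$. The map $\mathbf n\mapsto u_1^{n_1}\cdots u_d^{n_d}$ is a product of the homomorphisms $n_i\mapsto u_i^{n_i}$, hence a polynomial map $\Z^d\to G$, so by Leibman's theorem that a composition of polynomial maps is polynomial \cite{leibman}, $F$ is a polynomial $\Z^d\to\C$, which since $\Z^d$ is abelian is an ordinary multivariate polynomial of some finite total degree $D$. Expanding $\Delta_1\cdots\Delta_d F(\mathbf 0)$ as a signed sum over subsets $A\subseteq\{1,\ldots,d\}$ of the values $f(\prod_{i\in A}u_i)$ (products in increasing index order) and matching it, via the reindexing $u_i=v_{d+1-i}$, with the inclusion--exclusion expansion of $\partial_{v_1}\cdots\partial_{v_d}f(1)$, one finds $\Delta_1\cdots\Delta_d F(\mathbf 0)=\partial_{v_1}\cdots\partial_{v_d}f(1)\ne0$. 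Since any monomial of total degree less than $d$ in $d$ variables omits some variable and is killed by the corresponding $\Delta_i$, this forces $D\ge d$.

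\medskip

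It remains to transfer the growth of $F$ back to $f$. As $F$ is a nonzero polynomial of total degree $D\ge d\ge1$, its leading homogeneous part does not vanish identically on $\Z^d$, so there is $\mathbf t\in\Z^d$ with $|F(m\mathbf t)|\sim c\,m^{D}$ as $m\to\infty$. On the other hand $|u_1^{n_1}\cdots u_d^{n_d}|\le C\sum_i|n_i|$, so the points $u_1^{mt_1}\cdots u_d^{mt_d}$ have norm $O(m)$ while taking the value $|F(m\mathbf t)|\gtrsim m^{D}\ge m^{d}$ on $f$; hence $\max_{|x|\le r}|f(x)|\gtrsim r^{d}$ along a sequence $r\to\infty$, giving $\|f\|_d>0$ and therefore $\|f\|_{k+1}\ge\|f\|_d>0$, as required. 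I expect the main obstacle to be the middle step: establishing cleanly that $F$ really is a polynomial on $\Z^d$ rather than merely that sufficiently many iterated differences vanish. This is genuinely delicate because each $\Delta_i$ inserts $u_i$ in the \emph{interior} of the word $u_1^{n_1}\cdots u_d^{n_d}$, and so acts as a right derivative conjugated by an $\mathbf n$-dependent suffix; a direct verification would have to track these varying conjugating elements. Invoking Leibman's composition theorem sidesteps this entirely, and the remaining identity $\Delta_1\cdots\Delta_d F(\mathbf 0)=\partial_{v_1}\cdots\partial_{v_d}f(1)$ is then a bookkeeping matter handled by the inclusion--exclusion expansion of iterated derivatives together with \eqref{eq:deriv.by.prod}.
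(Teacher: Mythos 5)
Your forward direction is correct and essentially identical to the paper's proof: induction on the degree, telescoping along a geodesic using right derivatives, with Proposition \ref{prop:def.equiv} justifying that right derivatives lower degree. There is nothing to add there.

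The converse contains a genuine gap, at exactly the step you flagged as the main obstacle. The claim that $\mathbf{n}\mapsto u_1^{n_1}\cdots u_d^{n_d}$ is a polynomial map $\Z^d\to G$ because each $n_i\mapsto u_i^{n_i}$ is a homomorphism is false for a general group $G$: closure of polynomial maps under pointwise products is a theorem about \emph{nilpotent} targets (Proposition \ref{prop:llgt}), and the composition theorem you invoke is likewise a statement requiring nilpotency somewhere; there is no such theorem for arbitrary target groups. Indeed, if $G$ is free on $a,b$, the iterated derivatives of $(m,n)\mapsto a^mb^n$ are conjugates of iterated commutators built from powers of $a$ and $b$ (the computation is the same as in Lemma \ref{lem:identity.poly}), and in a free group these never become trivial, so this map lies in $P^k(\Z^2,G)$ for no finite $k$; consequently there is nothing to compose. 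The gap is repairable with the paper's own tools: since $\deg f=d$, Lemma \ref{lem:fact.through.tor.free.nilp} shows $f$ factors through the torsion-free nilpotent quotient $G/\overline G_{d+1}$, and after replacing $G$ by this quotient your argument goes through, using Proposition \ref{prop:llgt} to see that the product of homomorphisms is polynomial with respect to the filtration $\overline G_\bullet$, Proposition \ref{prop:leib.3.14} to see that its coordinates are polynomials on $\Z^d$, and Proposition \ref{thm:polys.equiv} with Lemma \ref{lem:deg.prod} to conclude that $F$ is an ordinary polynomial on $\Z^d$. (This is not circular, since nothing in Section \ref{sec:coords} uses Proposition \ref{prop:growth.of.polys}.) Your inclusion--exclusion identity $\Delta_1\cdots\Delta_dF(\mathbf{0})=\partial_{v_1}\cdots\partial_{v_d}f(1)$ and the final growth transfer are correct as stated.

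Even after this repair, note how much lighter the paper's route is. The paper proves Lemma \ref{lem:deriv.hom}, that $(x_1,\ldots,x_n)\mapsto\partial_{x_1}\cdots\partial_{x_n}f(1)$ is a homomorphism in each variable for $n=\deg f$; this gives $\partial_{x_1^m}\cdots\partial_{x_n^m}f(1)=m^n\,\partial_{x_1}\cdots\partial_{x_n}f(1)$, while expanding the left-hand side as a signed sum of values of $f$ at the points $\prod_{j\in S}x_j^m$, which have word length $O(m)$, bounds it by $o(m^{k+1})$ when $\|f\|_{k+1}=0$; choosing $x_1,\ldots,x_n$ with nonvanishing top derivative then forces $n\le k$. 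This is the same underlying mechanism as yours (powers of group elements have linearly growing word length but polynomially growing $f$-values), but it runs entirely inside $G$ with one elementary lemma, in place of transporting $f$ to $\Z^d$ through the coordinate machinery of Section \ref{sec:coords}.
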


\begin{lemma}\label{lem:deriv.hom}
Let $k\in\N$. For each $f\in P^k(G)$ the function $\varphi_f:G^k\to\C$ defined by
\[
\varphi_f(x_1,\ldots,x_k)=\p_{x_1}\cdots\p_{x_k}f(1)
\]
is a homomorphism in each variable $x_j$.
\end{lemma}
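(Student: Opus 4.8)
The plan is to prove that $\varphi_f(x_1,\ldots,x_k)=\p_{x_1}\cdots\p_{x_k}f(1)$ is a homomorphism in each variable by fixing all variables but one, say the $i$-th, and showing additivity there in the form $\varphi_f(\ldots,x_iy_i,\ldots)=\varphi_f(\ldots,x_i,\ldots)+\varphi_f(\ldots,y_i,\ldots)$. By symmetry of the argument across coordinates it suffices to treat a single coordinate; the natural choice is the \emph{first} variable, since $\p_{x_1}$ stands leftmost and is the first operator applied after evaluation.

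First I would record the key structural input: since $f\in P^k(G)$, every $(k+1)$-fold derivative of $f$ vanishes identically. In particular, for any fixed $x_2,\ldots,x_k$, the function $g:=\p_{x_2}\cdots\p_{x_k}f$ lies in $P^1(G)$, i.e. $\p_u\p_v g=0$ for all $u,v\in G$. The claim then reduces to showing that any $g\in P^1(G)$ satisfies $\p_{uv}g(1)=\p_u g(1)+\p_v g(1)$. This is where I would invoke identity \eqref{eq:deriv.by.prod}, which gives $\p_{uv}g=v^{-1}\p_u g+\p_v g$. Evaluating at the identity yields $\p_{uv}g(1)=(v^{-1}\p_u g)(1)+\p_v g(1)=\p_u g(v)+\p_v g(1)$, using the definition $(v^{-1}h)(1)=h(v)$. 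So I must show $\p_u g(v)=\p_u g(1)$, i.e. that $\p_u g$ is \emph{constant}. But $\p_u g\in P^0(G)$ precisely because $g\in P^1(G)$ (one more derivative kills it), and $P^0(G)$ consists exactly of the constant functions; hence $\p_u g(v)=\p_u g(1)$ for all $v$, which closes the first-variable case.

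To handle an arbitrary coordinate $x_i$ rather than just $x_1$, I would commute the relevant derivative into the leftmost position. Since left derivatives with respect to different elements need not commute in general, the cleanest route is to note that $\varphi_f$ is manifestly symmetric-in-structure under permuting the order of the $\p_{x_j}$ up to passing to a different (but equally valid) polynomial identity: concretely, for fixed values of the other arguments the map $x_i\mapsto\varphi_f(x_1,\ldots,x_k)$ is itself of the form $x_i\mapsto\p_{x_i}\tilde g(1)$ after absorbing the outer and inner derivatives, and one checks via repeated application of \eqref{eq:deriv.by.prod} together with Corollary \ref{cor:P^k.G.invariant} (invariance of $P^k(G)$ under the group actions) that the resulting one-variable function again lies in $P^1(G)$ in that variable. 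The homomorphism property in $x_i$ then follows from the same $P^1$-to-$P^0$ argument as above.

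The main obstacle is the non-commutativity of the left derivatives, which means I cannot simply permute $\p_{x_i}$ to the front without bookkeeping. The honest way to deal with this is to avoid moving operators at all: fix all coordinates except $x_i$, and directly apply \eqref{eq:deriv.by.prod} \emph{inside} the expression, using the commutation of left differentiation with the right action to verify that the function obtained by freezing the other variables is a degree-$1$ polynomial in $x_i$, so that its value at products splits additively. I expect the verification that the frozen function genuinely lies in $P^1(G)$ in the variable $x_i$ — rather than merely being annihilated by one further derivative in $x_i$ alone — to be the subtle point, and I would lean on Corollary \ref{cor:P^k.G.invariant} to guarantee that translating $f$ by the fixed $x_j$'s keeps it within $P^k(G)$, so that the degree bound transfers coordinatewise.
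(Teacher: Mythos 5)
Your argument for the first variable is correct: with $g=\p_{x_2}\cdots\p_{x_k}f\in P^1(G)$, identity \eqref{eq:deriv.by.prod} plus the constancy of $\p_u g$ gives additivity of $x_1\mapsto\p_{x_1}g(1)$. The gap is in the extension to a general coordinate $x_i$, which is the actual content of the lemma. You assert that, with the other variables frozen, the map $x_i\mapsto\varphi_f(x_1,\ldots,x_k)$ ``is itself of the form $x_i\mapsto\p_{x_i}\tilde g(1)$ after absorbing the outer and inner derivatives''. Absorbing the \emph{inner} derivatives is trivial, but absorbing the \emph{outer} ones $\p_{x_1},\ldots,\p_{x_{i-1}}$ is exactly what needs proof, and nothing you cite accomplishes it: left derivatives do not commute with one another, and when you apply \eqref{eq:deriv.by.prod} in position $i$ the problematic term is $\p_{x_1}\cdots\p_{x_{i-1}}\bigl(v^{-1}\p_u h\bigr)(1)$, where a \emph{left} translation $v^{-1}$ is trapped behind the outer left derivatives. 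The ``commutation of left differentiation with the right action'' that you invoke is the statement $\p_x R_y=R_y\p_x$; it says nothing about this left translation, so your plan stalls precisely at the point you yourself flagged as subtle.

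There are two ways to close the gap. The paper's route is induction on $k$: by Corollary \ref{cor:P^k.G.invariant} the function $v^{-1}\p_u h$ lies in $P^{i-1}(G)$, so the inductive hypothesis makes $\p_{x_1}\cdots\p_{x_{i-1}}(\cdot)(1)$ applied to it a homomorphism in each of $x_1,\ldots,x_{i-1}$; the identity $\p_y x=x\p_{y^x}$ (with $y^x=x^{-1}yx$) then pulls $v^{-1}$ to the front at the cost of conjugating the $x_j$'s, conjugation-invariance of $\C$-valued homomorphisms undoes that, and finally constancy of the $k$-fold derivative $\p_{x_1}\cdots\p_{x_{i-1}}\p_u h$ removes the leading translation. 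Alternatively, your ``absorption'' idea can be made rigorous by a fact you never state: an \emph{outermost} left derivative evaluated at the identity is a right derivative, $\p_uF(1)=F(u)-F(1)=\p^{u}F(1)$, and right derivatives genuinely do commute with left ones. Iterating this converts $\p_{x_1},\ldots,\p_{x_{i-1}}$ into right derivatives and pushes them innermost, giving $\varphi_f=\p_{x_i}\tilde g(1)$ with $\tilde g=\p_{x_{i+1}}\cdots\p_{x_k}\p^{x_{i-1}}\cdots\p^{x_1}f$; each right derivative also lowers degree (it commutes past a full left-derivative chain, which produces constants that it annihilates), so $\tilde g\in P^1(G)$ and your first-variable argument finishes. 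As written, however, neither mechanism appears in your proposal, so the case of a general coordinate remains unproven.
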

\begin{proof}
The $k=1$ case holds because $P^1(G)$ is just the space of homomorphisms into the additive group $\C$
plus constants. For $k>1$ we proceed by induction.
It follows from \eqref{eq:deriv.by.prod} that
\[
\begin{split}
\p_{x_1}\cdots\p_{x_{j-1}}\p_{u_jv_j}\p_{x_{j+1}}\cdots\p_{x_k}f(1)
\qquad\qquad\qquad\qquad\qquad\qquad\qquad\qquad\qquad\qquad\qquad\qquad\\
=\p_{x_1}\cdots\p_{x_{j-1}}v_j^{-1}\p_{u_j}\p_{x_{j+1}}\cdots\p_{x_k}f(1)
+\p_{x_1}\cdots\p_{x_{j-1}}\p_{v_j}\p_{x_{j+1}}\cdots\p_{x_k}f(1).
\end{split}
\]
Corollary \ref{cor:P^k.G.invariant} implies that $v_j^{-1}\p_{u_j}\p_{x_{j+1}}\cdots\p_{x_k}f\in P^{j-1}$, and so by induction on $k$ we may assume that the quantity $\p_{x_1}\cdots\p_{x_{j-1}}v_j^{-1}\p_{u_j}\p_{x_{j+1}}\cdots\p_{x_k}f(1)$ is a homomorphism in each of the variables $x_1,\ldots,x_{j-1}$.
Write $y^x=x^{-1}yx$, and note that $\p_yx=x\p_{y^x}$. Since homomorphisms into the additive group $\C$ are invariant under conjugation, this implies that
\[
\begin{split}
\p_{x_1}\cdots\p_{x_{j-1}}\p_{u_jv_j}\p_{x_{j+1}}\cdots\p_{x_k}f(1)
\qquad\qquad\qquad\qquad\qquad\qquad\qquad\qquad\qquad\qquad\qquad\qquad\\
=v_j^{-1}\p_{x_1}\cdots\p_{x_{j-1}}\p_{u_j}\p_{x_{j+1}}\cdots\p_{x_k}f(1)
+\p_{x_1}\cdots\p_{x_{j-1}}\p_{v_j}\p_{x_{j+1}}\cdots\p_{x_k}f(1),
\end{split}
\]
and so the result follows from the fact that $\p_{x_1}\cdots\p_{x_{j-1}}\p_{u_j}\p_{x_{j+1}}\cdots\p_{x_k}f$ is constant.
\end{proof}

\begin{proof}[Proof of Proposition \ref{prop:growth.of.polys}]
The first direction is by induction on $n = \deg f$.
If $n=0$ then $f$ is constant and thus $||f ||_0 < \infty$.
If $n>0$ then, by induction, there exists a constant $C>0$ such that $|\p^s f (y) | \leq C |y|^{n-1}$ for every $s$ with $|s|=1$.
Fixing $x \in G$, and writing $x = s_1 \cdots s_{|x|}$ with $|s_i|=1$, we have
$$ f(x) = f(1) + \sum_{j=1}^{|x|} \p^{s_j} f(s_1 \cdots s_{j-1}) , $$
and so
\begin{align*}
|f(x)| &\leq |f(1) | + C \sum_{j=1}^{|x|} (j-1)^{n-1} = O(|x|^n).
\end{align*}

To prove the converse direction we assume that $|| f ||_{k+1} = 0$,
which implies that $|f(x)| = o(|x|^{k+1})$ as $|x| \to \infty$.
Still writing $n=\deg f$,
we conclude from Lemma \ref{lem:deriv.hom} that
\[
\p_{x_1^m} \ldots \p_{x_n^m}f(1) = m^n  \p_{x_1}\cdots \p_{x_n}f(1).
\]
In particular, this implies that, for fixed $x_1,\ldots,x_n$, as $m\to\infty$ we have
\begin{align}\label{eq:n<k}
m^n|\p_{x_1}\cdots\p_{x_n}f(1)| & = |\p_{x_1^m}\cdots\p_{x_n^m}f(1)|
\leq  \sum_{S \subset \{1,\ldots,n\} } \Big| f \Big( {\textstyle\prod_{j \in S} x_j^m } \Big) \Big|
=o(m^{k+1}). 
\end{align}
By definition of degree there exist $x_1,\ldots,x_n$ such that $\p_{x_1}\cdots\p_{x_n}f(1)\ne0$,
and then letting $m \to \infty$ in \eqref{eq:n<k} implies that $n\le k$.
\end{proof}

\section{Harmonic functions of polynomial growth are polynomials}% on a finite index nilpotent subgroup}
\label{sec:harmonic}
The purpose of this section is to prove Theorem \ref{thm:alex.gen}.
We start by recalling some basic facts about groups acting by linear transformations.
Suppose that $H$ is a group acting linearly on an $n$-dimensional vector space $V$ over a field $\F$.
We denote by $\mathrm{Hom}(H,\F^\times)$ the characters of the group $H$ into the multiplicative group $\F^\times$.
Given $\lambda \in \mathrm{Hom}(H,\F^\times)$,
we denote the {\em weight space} corresponding to $\lambda$
by
$$ V_\lambda = V_{\lambda}^{(1)} = \{ v \in V \ : \ x v = \lambda(x) v \ , \ \forall \ x \in H\} =
\bigcap_{x \in H} \ker (x-\lambda(x) I) . $$
The {\em $k$-th generalised weight space} is defined inductively by
$$ V_{\lambda}^{(k)} = \{ v \in V \ : \ (x-\lambda(x)I) v \in V_{\lambda}^{k-1} \ , \ \forall \ x \in H\} . $$
We also set $V_{\lambda}^{(0)} = \{0\}$, which is consistent with these definitions.
The {\em generalised weight space} is defined by
$$ V_{\lambda}^* = \bigcup_k V_{\lambda}^{(k)} . $$
Thus, $v \in V_{\lambda}^*$ if and only if there exists $k$ such that $(x-\lambda(x) I)^k v = 0$
for all $x \in H$.  Note that $V_{\lambda}^*$ is an $H$-invariant subspace.
It is a well-known fact from linear algebra that
$V_{\lambda}^* \cap V_{\beta}^* = \{ 0\}$ if $\lambda \neq \beta$.
It is important to note that this definition is with respect to some group acting linearly on $V$, and depends on the specific choice of the acting group.
If $G$ acts linearly on $V$ and $H \leq G$  then $V_{\lambda}^*$ with respect to $G$
is a subspace of $V_{\lambda}^*$ with respect to $H$.

If $G$ acts linearly on a vector space $V$ and $K$ is the kernel of this action then $G/K$ is isomorphic to a subgroup of $GL(V)$. If $G/K$ is virtually nilpotent then we say the action of $G$ on $V$ is virtually nilpotent. We make use of the following standard lemma about virtually nilpotent linear actions; for a proof see \cite[Lemma 6.2]{mey-yad}, for example.

\begin{lemma}\label{lem:eigen}
Let $G$ be a group, and let $V$ be a finite-dimensional vector space over an algebraically closed field $\F$ such that $G$ acts linearly on $V$ and such that this action is virtually nilpotent. Then there exists a finite-index normal subgroup $H\lhd G$ with respect to which we have
\[
V=\bigoplus_{j=1}^r V_{\lambda_j}^\ast,
\]
with $\lambda_1,\ldots,\lambda_r\in\Hom(H,\F^\times)$.
\end{lemma}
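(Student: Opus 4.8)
The plan is to reduce to the case where $G$ acts through a nilpotent group and then to establish the simultaneous generalised weight space decomposition by an induction modelled on the classical weight space theory for nilpotent groups. Writing $\rho\colon G\to GL(V)$ for the action homomorphism, I would first let $K=\ker\rho$ and use the virtual nilpotency of $\rho(G)\cong G/K$ to find a finite-index nilpotent subgroup; pulling this back to $G$ and replacing it by its normal core (the kernel of the action of $G$ on the finitely many cosets of the pulled-back subgroup) yields a finite-index normal subgroup $N\lhd G$ whose image $\rho(N)\le GL(V)$ is nilpotent. Since any further finite-index subgroup can again be normalised by passing to its normal core, and since the displayed decomposition for a subgroup $H'\le H$ follows from that for $H$ by merging weights that become equal upon restriction (using the stated fact that $V_\lambda^\ast\cap V_\beta^\ast=\{0\}$ for $\lambda\neq\beta$), it suffices to produce \emph{some} finite-index subgroup of $N$ carrying the decomposition.

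For the nilpotent group $\Gamma:=\rho(N)$ I would argue by induction on $\dim V$. Let $\Gamma_c$ be the last nontrivial term of the lower central series of $\Gamma$; it is central and abelian, so the commuting invertible operators $\{\,z : z\in\Gamma_c\,\}$ admit a simultaneous generalised eigenspace decomposition $V=\bigoplus_\mu V_\mu^\ast$, with $V_\mu^\ast$ the generalised weight space for $\Gamma_c$ and $\mu\in\Hom(\Gamma_c,\F^\times)$. Because $\Gamma_c$ is central, each $x\in\Gamma$ commutes with every $z\in\Gamma_c$, and hence each summand $V_\mu^\ast$ is $\Gamma$-invariant. If more than one weight $\mu$ occurs then every summand has dimension strictly less than $\dim V$; applying the inductive hypothesis to the $\Gamma$-action on each summand, intersecting the finitely many resulting finite-index subgroups, and re-normalising disposes of this case.

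The delicate case, and the step I expect to be the main obstacle, is when a single weight $\mu$ occurs, so that $V=V_\mu^\ast$ for $\Gamma_c$ and every $z\in\Gamma_c$ acts as $\mu(z)$ times a unipotent. A determinant computation resolves the arithmetic of this case: each element of $\Gamma_c=[\Gamma,\Gamma_{c-1}]$ is a product of commutators and so has determinant $1$, forcing $\mu(z)^{\dim V}=1$, whence $\mu(\Gamma_c)$ consists of roots of unity and is finite. It is exactly this finite root-of-unity ambiguity in the central character that obstructs a common eigenvector for $\Gamma$ itself and that must be removed by passing to finite index; the same phenomenon is what prevents a nonabelian finite nilpotent group with a faithful irreducible representation from being triangularisable, and explains why finite index is genuinely necessary in the statement. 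I would exploit the finiteness of $\mu(\Gamma_c)$ to pass to a finite-index subgroup on which the central character is trivialised, so that $\Gamma_c$ acts unipotently, thereby either reducing the nilpotency class or producing a proper nonzero invariant summand to which the induction applies. The structural content underlying this manoeuvre is that the Zariski closure of $\Gamma$ is a nilpotent algebraic group whose identity component splits as a central torus times a unipotent group; restricting to the finite-index subgroup meeting the identity component makes the semisimple parts of all elements commute, and simultaneously diagonalising them produces the characters $\lambda_1,\ldots,\lambda_r\in\Hom(H,\F^\times)$ together with the decomposition $V=\bigoplus_{j=1}^r V_{\lambda_j}^\ast$. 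Finally I would pass to the normal core in $G$ of the finite-index subgroup so obtained to arrive at the required normal $H\lhd G$, the decomposition being preserved under this last reduction as noted in the first paragraph.
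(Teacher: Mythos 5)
The paper itself gives no proof of this lemma: it cites \cite[Lemma 6.2]{mey-yad}, and the argument there is precisely the algebraic-group one you invoke in your closing paragraph (Zariski closure of the nilpotent image, identity component equal to a central torus times a unipotent group, weight decomposition under the torus, then intersect with the identity component). Your surrounding reductions are sound: pulling back a finite-index nilpotent subgroup and taking its normal core; and the observation that a generalised weight decomposition with respect to $H$ descends to any subgroup $H''\le H$ by restricting the characters and merging those that coincide, since sums of generalised weight spaces attached to distinct characters are direct --- so it suffices to produce \emph{some} finite-index subgroup carrying the decomposition. Read with the torus-times-unipotent splitting doing the work in the hard case, your proposal is a correct proof and essentially coincides with the one the paper cites.

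However, the elementary induction you advertise as the main plan does not close, and the gap sits exactly at the step you flag. First, the existence of a finite-index subgroup of $\Gamma$ trivialising the central character $\mu$ is asserted rather than constructed. Your determinant observation (that $\mu(\Gamma_c)$ consists of $(\dim V)$-th roots of unity) is the right arithmetic input, and for \emph{finitely generated} $\Gamma$ one can finish it: the subgroup generated by all $(\dim V)$-th powers has finite index, and multilinearity of iterated commutators (Lemma \ref{lem:comm.multilin}) shows its $c$-th lower central term lies in $\ker\mu$. But the lemma carries no finite-generation hypothesis, so this needs a substitute (Burnside's theorem on linear groups of bounded exponent, or again the Zariski closure). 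Second, and more seriously, the dichotomy ``thereby either reducing the nilpotency class or producing a proper nonzero invariant summand'' is false. Once $\Gamma'_c$ acts unipotently but nontrivially, what you obtain is the fixed space $V^{\Gamma'_c}$: a proper nonzero invariant \emph{subspace}, not a summand (a single Jordan block shows the fixed line need have no invariant complement), and the nilpotency class of the image does not drop, since unipotent is not trivial. Your induction on dimension applies only to invariant summands, so it stalls here; resolving the resulting extension problem --- sub and quotient decompose, hence the whole space decomposes after passing to finite index --- is precisely the content of the torus-times-unipotent theorem. So the proposal stands or falls with the algebraic-group input of its last paragraph: with it, the proof is correct and matches the cited one; without it, the induction has a genuine hole.
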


\begin{proof}[Proof of Theorem \ref{thm:alex.gen}]
The case $k=0$ is handled by observing that the constants are the only finite dimensional $G$-invariant space of bounded harmonic functions; see \cite{mey-yad} for details and references.
A proof for the case $k=1$ appears in \cite{mey-yad}.

The group $G$ from Theorem \ref{thm:alex.gen} acts linearly on $H^k(G,\mu)$ via $g\cdot h(x)=h(g^{-1}x)$, and since $\dim H^k(G,\mu)<\infty$ this defines a homomorphism $\pi:G \to GL_n(\C)$ for some $n\in\N$.
A function $f:G\to\C$ belongs to $H^k(G,\mu)$ if and only if it is of the form $f= \tilde f\circ\pi$, with $\tilde f \in H^k(\pi(G),\mu \circ \pi^{-1})$.
It follows that $\pi(G) < GL_n(\C)$ is a linear group with $\dim H^k(\pi(G),\mu \circ \pi^{-1})<\infty$. Since $\mu$ satisfies the standing assumptions, so too does the measure $\mu \circ \pi^{-1}$ on $\pi(G)$, and so \cite[Theorem 1.4]{mey-yad} implies that $\pi(G)$ is virtually nilpotent. Lemma \ref{lem:eigen} therefore implies that there is a finite-index subgroup $H$ of $G$ with respect to which we can decompose $V=H^k(G,\mu)$ as $H^k(G,\mu)=\bigoplus_{j=1}^r V_{\lambda_j}^\ast$, with $\lambda_j\in\Hom(H,\C^\times)$.

Fix some $\lambda=\lambda_j$. 
Let $f\in V_\lambda^{(1)}$. Then for every $x\in H$ we have $f(x^{-n})=\lambda(x)^nf(1)$, which, since $f$ is bounded by a polynomial, implies that $|\lambda(x)|=1$. 
If $g_1,\ldots,g_t$ is a complete set of right-coset representatives of $H$ in $G$, this implies that $|f(hg_i)|=|f(g_i)|$ for every $h\in H$ and every $i$, and so $f$ is bounded on $G$.
 The Liouville property for virtually nilpotent groups (see, for example, \cite{kai-ver}) therefore implies that $f$ is constant on $G$. This implies first of all that $V_\lambda^\ast = \{0\}$ unless $\lambda$ is the trivial character $1$, and so in fact we have $H^k(G,\mu) = V_1^\ast$.

Now note that $f \in V_1^{(n)}$ if and only if for all $x \in H$ we have $\p_x f = x^{-1} f-f \in V_1^{(n-1)}$. 
Since $V_1^{(0)} = \{0\}$, for every $n$ this implies that if $f:G\to\C$ belongs to $V_1^{(n)}$ then $f|_H\in P^n(H)$. In particular, every $f\in H^k(G,\mu)$ satisfies $f|_H\in P^n(H)$ for $n=\dim H^k(G,\mu)$. However, by definition every such $f$ also satisfies $\|f\|_k<\infty$. 
Given a finite symmetric generating set $S'$ of $H$, there exists $C>1$ so that for any $x,y \in H$ we have
$$C^{-1} \dist_{G,S}(x,y) < \dist_{H,S'}(x,y)  < C \dist_{G,S}(x,y).$$
It follows that restricting  to a finite index subgroup does not increase the degree of polynomial growth, hence $\|f|_H\|_k<\infty$. So in fact $f|_H\in P^k(H)$ by Proposition \ref{prop:growth.of.polys}.

To see that $f$ is a polynomial of degree $k$ with respect to $H$ on all of $G$, we must show more generally that for every $t\in G$ the function $H\to\R$ defined by $x\mapsto f(xt)$ is a polynomial of degree $k$. The function $p_t=t^{-1}f$ belongs to $H^k(G,\mu)$, and so by the first part of the proof it restricts to a polynomial of degree at most $k$ on $H$. In particular, since $H$ is normal, for every $x,u_1,\ldots,u_{k+1}\in H$ we have $\partial_{t^{-1}u_1t}\cdots\partial_{t^{-1}u_{k+1}t}p_t(t^{-1}xt)=0$, and so the function $q_t:H\to\R$ defined by $q_t(x)=p_t(t^{-1}xt)$ is a polynomial of degree at most $k$ on $H$. However, $q_t(x)=p_t(t^{-1}xt)=t^{-1}f(t^{-1}xt)=f(xt)$, and so $f$ is indeed a polynomial with respect to $H$ on $G$. Moreover, $q_t$ factors through $H/H_{k+1}$ by Lemma \ref{lem:comm.deriv}, and so $f$ factors through $G/H_{k+1}$, as required.
\end{proof}

\section{Polynomials in terms of group coordinates}\label{sec:coords}

\subsection{Generalised commutator subgroups}

The purpose of this section is to give a more explicit description of the space $P^k(G)$ of polynomials on a finitely generated group $G$, in terms of certain coordinate systems on $G$. We define these coordinate systems in terms of certain subgroups of $G$ called the \emph{generalised commutator subgroups}, denoted $\overline G_i$ and defined by
\begin{equation}\label{eq:gen.com}
\overline G_i=\{c\in G:(\exists n)(c^n\in G_i)\}.
\end{equation}
\begin{lemma}\label{lem:tors.finite}
The generalised commutator subgroups are all characteristic subgroups. Moreover, if $G$ is finitely generated then $\overline G_i$ contains $G_i$ as a finite-index subgroup.
\end{lemma}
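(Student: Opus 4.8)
The plan is to recognise $\overline G_i$ as the preimage of a torsion subgroup under the canonical projection. Write $\pi_i\colon G\to G/G_i$. Since $G_j\supseteq G_i$ for $j\le i$ and $G_i/G_i$ is trivial, the lower central series of $G/G_i$ terminates at step $i$, so $G/G_i$ is nilpotent (of class at most $i-1$); this requires no finiteness hypothesis on $G$. By definition $c\in\overline G_i$ holds exactly when $\pi_i(c)$ has finite order in $G/G_i$, and therefore $\overline G_i=\pi_i^{-1}\big(t(G/G_i)\big)$, where $t(N)$ denotes the set of torsion elements of a group $N$. With this single reformulation both claims reduce to standard structure theory.

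For the subgroup and characteristic assertions I would argue as follows. The torsion elements of a nilpotent group form a (normal) subgroup \cite[\S5.2]{Robinson}, so $t(G/G_i)$ is a subgroup of $G/G_i$; being the full preimage of a subgroup, $\overline G_i$ is then a subgroup of $G$ containing $\ker\pi_i=G_i$. To see that $\overline G_i$ is characteristic I would use that each $G_i$ is characteristic (every $\phi\in\Aut G$ preserves commutators, hence $\phi(G_i)=G_i$): if $c^n\in G_i$ then $\phi(c)^n=\phi(c^n)\in G_i$, giving $\phi(\overline G_i)\subseteq\overline G_i$, and applying the same to $\phi^{-1}$ yields equality.

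For the finite-index statement I would bring in finite generation. When $G$ is finitely generated, so is its quotient $G/G_i$, and a finitely generated nilpotent group has a \emph{finite} torsion subgroup \cite[\S5.2]{Robinson}. Hence $t(G/G_i)$ is finite, and since restricting $\pi_i$ to $\overline G_i$ is surjective onto $t(G/G_i)$ with kernel $G_i$, we obtain $\overline G_i/G_i\cong t(G/G_i)$. Thus $G_i$ has finite index in $\overline G_i$, as required.

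The only genuinely non-formal inputs are the two structural facts about nilpotent groups invoked above — that their torsion elements form a subgroup, and that this subgroup is finite in the finitely generated case — so the ``main obstacle'' is really just identifying $\overline G_i$ correctly as $\pi_i^{-1}(t(G/G_i))$ and citing these facts; once the reformulation is in place the remainder is routine bookkeeping with the projection $\pi_i$.
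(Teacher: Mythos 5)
Your proposal is correct and follows essentially the same route as the paper: both identify $\overline G_i$ as the preimage under $G\to G/G_i$ of the torsion subgroup of the nilpotent quotient, invoke that torsion elements of a nilpotent group form a subgroup, and deduce finite index from finiteness of that torsion subgroup in the finitely generated case. The only cosmetic differences are that you spell out the characteristic-subgroup step (which the paper dismisses as clear) and cite the finiteness of the torsion subgroup directly, where the paper derives it from the fact that a nilpotent group generated by finitely many finite-order elements is finite.
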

\begin{proof}
The set of elements of finite order in a nilpotent group is a subgroup, called the \emph{torsion subgroup} \cite[\S5.2]{Robinson}. The set $\overline G_i$ is precisely the pre-image in $G$ of the torsion subgroup of the nilpotent quotient $G/G_i$, and so it is a group. It is also clearly invariant under automorphisms of $G$.

If $G$ is finitely generated then so is $G/G_i$, and since $G/G_i$ is nilpotent this implies that each of its subgroups is also finitely generated. In particular, being a nilpotent group generated by finitely many finite-order elements, its torsion subgroup is finite. This readily implies that $G_i$ has finite index in $\overline G_i$.
\end{proof}

One reason for introducing the generalised commutator subgroups $\overline G_i$ is to handle issues arising in \cite[3.14 \& 3.15]{leibman} when some of the factors $G_i/G_{i+1}$ have torsion. See Proposition \ref{prop:leib.3.14}, below.

The subgroups $\overline G_i$ appear quite naturally in the study of polynomials, thanks to the following refinement of Lemma \ref{lem:comm.deriv}.
\begin{prop}\label{prop:comm.deriv}
Let $G$ be a group and let $f\in P^k(G)$. Then for every $c\in\overline G_i$ we have
$\partial_c f\in P^{k-i}(G)$ and $\p^c f \in P^{k-i}(G)$. In particular, $f$ factors through $G/\overline G_{k+1}$.
\end{prop}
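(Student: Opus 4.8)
The plan is to prove the degree-drop $\partial_c f\in P^{k-i}(G)$ first for $c$ in the ordinary lower central series term $G_i$, by induction on $i$, and then to bootstrap from $G_i$ to the generalised commutator subgroup $\overline G_i$ using the defining property \eqref{eq:gen.com} that each $c\in\overline G_i$ has a power in $G_i$. The right derivative statement and the final ``in particular'' clause will then follow quickly.

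For the first part I would fix $i$ and work with the set
\[
D_i=\{c\in G:\partial_c g\in P^{m-i}(G)\text{ for all }m\ge0\text{ and all }g\in P^m(G)\}.
\]
The first observation is that $D_i$ is a subgroup: closure under products is immediate from the product rule \eqref{eq:deriv.by.prod} and the translation invariance of $P^{m-i}(G)$ (Corollary \ref{cor:P^k.G.invariant}), while applying \eqref{eq:deriv.by.prod} to $\partial_1g=\partial_{xx^{-1}}g$ yields $\partial_{x^{-1}}g=-x\,\partial_xg$, giving closure under inverses. The base case $i=1$ is just the definition of degree, so $D_1=G$. For the inductive step I would use that $G_i$ is generated by commutators $[x,y]$ with $x\in G_1$ and $y\in G_{i-1}$, together with the commutator identity \eqref{eq:deriv.by.comm}: applying the inductive hypotheses $G_1\subseteq D_1$ and $G_{i-1}\subseteq D_{i-1}$ to the (left invariant, by Corollary \ref{cor:P^k.G.invariant}) function $yxg$ shows that both terms $\partial_y\partial_x(yxg)$ and $\partial_x\partial_y(yxg)$ lie in $P^{m-i}(G)$. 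Since $D_i$ is a subgroup containing a generating set of $G_i$, this gives $G_i\subseteq D_i$, i.e. $\partial_cf\in P^{k-i}(G)$ for all $c\in G_i$.

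The heart of the argument is the passage from $G_i$ to $\overline G_i$. Given $c\in\overline G_i$, choose $n\ge1$ with $c^n\in G_i$, so that $\partial_{c^n}f\in P^{k-i}(G)$ by the first part. Telescoping \eqref{eq:deriv.by.prod} gives the identity
\[
\partial_{c^n}f=\sum_{j=0}^{n-1}c^{-j}\,\partial_cf,
\]
so, writing $h=\partial_cf\in P^{k-1}(G)$ and $m=\deg h$, the task reduces to showing $m\le k-i$. The difficulty is that a sum of $n$ left translates of $h$ could in principle drop in degree through cancellation, and ruling this out is the main obstacle. To do so I would look at the top degree multihomomorphism of $h$ built from \emph{right} derivatives (the right derivative analogue of Lemma \ref{lem:deriv.hom}, with degrees matching those for left derivatives by Proposition \ref{prop:def.equiv}). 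The point is that right derivatives commute with left translation, and that the $m$-fold right derivative of $h$ is a nonzero constant because $\deg h=m$; since constants are fixed by left translation, applying $\partial^{x_1}\cdots\partial^{x_m}$ to the displayed identity simply multiplies this constant by $n$, leaving it nonzero. Hence $\deg\partial_{c^n}f=m$, and comparing with $\partial_{c^n}f\in P^{k-i}(G)$ forces $m\le k-i$, as required.

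Finally, the right derivative statement $\partial^cf\in P^{k-i}(G)$ follows either by the symmetric argument or, more cleanly, by applying the left derivative result to the function $\check f$ defined by $\check f(x)=f(x^{-1})$: the map $f\mapsto\check f$ intertwines $\partial_u$ with $\partial^{u^{-1}}$, preserves each $P^k(G)$ together with the degree, and $c^{-1}\in\overline G_i$, so that $\partial_{c^{-1}}\check f\in P^{k-i}(G)$ translates back into $\partial^cf\in P^{k-i}(G)$. Taking $i=k+1$ then gives $\partial_cf\in P^{-1}(G)=\{0\}$ and likewise $\partial^cf=0$ for every $c\in\overline G_{k+1}$, which is exactly the assertion that $f$ factors through $G/\overline G_{k+1}$.
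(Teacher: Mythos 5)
Your proof is correct, but it takes a genuinely different route from the paper's at the decisive point: the passage from the lower central series to the generalised commutator subgroups. The paper does not treat $G_i$ and $\overline G_i$ in two stages; it runs a single induction on $k-i$ directly with $c\in\overline G_i$, whose base case $i>k$ is supplied by a torsion lemma (Lemma \ref{lem:torsion.deriv}: a polynomial has vanishing derivative with respect to any finite-order element, whence Lemma \ref{lem:fact.through.tor.free.nilp}) and whose inductive step combines the identity \eqref{eq:deriv.by.comm} with Lemma \ref{lem:gen.coms}, i.e.\ the fact that $[x,c]\in\overline G_{i+1}$ whenever $c\in\overline G_i$. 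That lemma is the expensive ingredient, proved via commutator forms and \cite[Proposition B.2]{nilp.frei}. You avoid it entirely: your first stage (the subgroups $D_i$) is essentially the paper's inductive step specialised to the ordinary lower central series, and your second stage replaces the commutator calculus by the telescoping identity $\partial_{c^n}f=\sum_{j=0}^{n-1}c^{-j}\partial_cf$ together with the observation that the $m$-fold right derivative of $\partial_cf$, being a translation-invariant nonzero constant for a suitable choice of directions, is merely multiplied by $n$ in the sum, so no cancellation can lower the top degree. This is sound: right derivatives do commute with left translations, and by Proposition \ref{prop:def.equiv} degree may be computed with either kind of derivative; note also that your cancellation argument subsumes the paper's Lemma \ref{lem:torsion.deriv} as the special case $c^n=1$. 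The trade-off is this: your argument is more elementary and self-contained, whereas the paper's route establishes Lemma \ref{lem:gen.coms} as a standalone statement that it reuses repeatedly afterwards (the filtration property of $\overline G_\bullet$, Lemma \ref{lem:coords.mult.add}, torsion-freeness of the quotients $\overline G_i/\overline G_{i+1}$), so the cost of the commutator calculus is amortised across the paper. Your treatment of right derivatives via the inversion map $f\mapsto\check f$ and of the ``in particular'' clause via $i=k+1$ both check out.
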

\begin{lemma}\label{lem:torsion.deriv}
Let $G$ be a group and let $f\in P^k(G)$. Let $c\in G$ be an element of finite order. Then %$\partial^c f \equiv \p_c f \equiv 0$.
$\p_cf = 0$.
\end{lemma}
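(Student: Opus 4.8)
The plan is to prove the apparently stronger statement that $\partial_c f$ vanishes \emph{identically} by induction on $k$, with the engine being a geometric-sum expansion of $\partial_{c^n}f$ combined with the fact that $c^m=1$ for some $m$.

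First I would record the algebraic identity
\[
\partial_{c^n} f = \sum_{j=0}^{n-1} c^{-j}\,\partial_c f \qquad (n\ge 1),
\]
which follows from \eqref{eq:deriv.by.prod} by a one-line induction on $n$, writing $c^n=c^{n-1}\cdot c$ and applying \eqref{eq:deriv.by.prod} with $x=c^{n-1}$, $y=c$. Letting $m$ denote the order of $c$ and taking $n=m$, the left-hand side becomes $\partial_{c^m}f=\partial_1 f=0$, so the identity reduces to the vanishing relation $\sum_{j=0}^{m-1} c^{-j}\,\partial_c f = 0$.

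Next I would set up the induction on $k$. The point is that $g:=\partial_c f$ automatically lies in $P^{k-1}(G)$: for any $u_1,\ldots,u_k\in G$ the product $\partial_{u_1}\cdots\partial_{u_k}(\partial_c f)$ is a length-$(k+1)$ product of left derivatives of $f\in P^k(G)$ (with $\partial_c$ innermost, so no commutation is needed), and hence vanishes. In the base case $k=0$ we get $g\in P^{-1}(G)=\{0\}$, so $g=0$. For $k\ge 1$, the inductive hypothesis applied to the lower-degree polynomial $g\in P^{k-1}(G)$ and the same finite-order element $c$ yields $\partial_c g=0$; equivalently $c^{-1}g=g$, and therefore $c^{-j}g=g$ for every $j$. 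Substituting this into the vanishing relation from the previous step collapses the sum to $m\,g=0$, and since $m\neq0$ in $\C$ this forces $g=\partial_c f=0$, as required.

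The step I expect to be the crux is recognising that the vanishing relation $\sum_{j=0}^{m-1} c^{-j}\partial_c f=0$ is by itself too weak — it merely asserts that a particular sum of translates of $\partial_c f$ is zero, which does not give $\partial_c f=0$ without more information — and that one must feed in the inductive conclusion that $\partial_c f$ is itself $c$-invariant in order to turn that sum into the single term $m\,\partial_c f$. An alternative opening move would be to invoke Lemma \ref{lem:deriv.hom}: since $\varphi_f$ is a homomorphism into the torsion-free group $\C$ in each variable, inserting the torsion element $c$ into any slot gives $0$, which quickly shows $\partial_c f\in P^{k-2}(G)$; but one would still need an induction to pass from ``low degree'' to ``identically zero'', so the self-contained induction above seems the cleanest route.
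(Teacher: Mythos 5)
Your proof is correct and follows essentially the same route as the paper's: both argue by induction on $k$, use the inductive hypothesis to get $\p_c\p_cf=0$ (equivalently, $c$-invariance of $\p_cf$), and then exploit the resulting arithmetic-progression structure $f(c^nx)=f(x)+n\,\p_cf(x)$ together with $c^m=1$ to conclude $m\,\p_cf=0$. Your geometric-sum identity for $\p_{c^n}f$ is just a repackaging of the paper's telescoping observation, so the two arguments coincide.
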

\begin{proof}
The proof is by induction on $k$.
If $k=0$ then the lemma is trivial. If $k\ge1$ then by induction we have $\partial_c\partial_cf=0$, from which it easily follows that for each $x\in G$ and $n\in\N$ we have $f(c^nx)=f(x)+n\partial_cf(x)$
(because $f(c^{j+2} x)-f(c^{j+1} x) = f(c^{j+1} x) - f(c^j x)$ for all integers $j$). 
Taking $n\ne0$ such that $c^n=1$ therefore implies that $\partial_cf(x)=0$, and so the lemma is proved.
\end{proof}

\begin{lemma}\label{lem:fact.through.tor.free.nilp}
Let $G$ be a group and let $f\in P^k(G)$. Then $f$ factors through $G/\overline G_{k+1}$.
\end{lemma}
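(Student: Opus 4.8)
The plan is to show directly that $\partial_c f=0$ for every $c\in\overline G_{k+1}$; since $\overline G_{k+1}$ is a characteristic, hence normal, subgroup by Lemma \ref{lem:tors.finite}, this is exactly the assertion that $f$ factors through $G/\overline G_{k+1}$. The key observation is that, modulo $G_{k+1}$, every element of $\overline G_{k+1}$ becomes a torsion element, so the problem reduces to the already-established torsion statement of Lemma \ref{lem:torsion.deriv} applied in the quotient group $G/G_{k+1}$.

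Concretely, first I would invoke Lemma \ref{lem:comm.deriv}, which gives that $f$ factors through $G/G_{k+1}$; writing $\pi\colon G\to G/G_{k+1}$ for the projection, there is $\hat f\colon G/G_{k+1}\to\C$ with $f=\hat f\circ\pi$, and Lemma \ref{lem:normal.factor} guarantees $\hat f\in P^k(G/G_{k+1})$. Now fix $c\in\overline G_{k+1}$. By the defining property \eqref{eq:gen.com} there is some $n$ with $c^n\in G_{k+1}$, so the image $\pi(c)=cG_{k+1}$ satisfies $\pi(c)^n=1$; that is, $\pi(c)$ is an element of finite order in $G/G_{k+1}$.

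The final step is to apply Lemma \ref{lem:torsion.deriv} to the polynomial $\hat f\in P^k(G/G_{k+1})$ and the finite-order element $\pi(c)$, yielding $\partial_{\pi(c)}\hat f=0$. Transferring back via Lemma \ref{lem:normal.factor} (with $N=G_{k+1}$ and $x=c$) gives $\widehat{\partial_c f}=\partial_{\pi(c)}\hat f=0$, and composing with $\pi$ yields $\partial_c f=0$. As $c\in\overline G_{k+1}$ was arbitrary, $f$ is constant on each coset $\overline G_{k+1}x$, i.e.\ it factors through $G/\overline G_{k+1}$.

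I do not expect a genuine obstacle here: the lemma is essentially the composite of Lemma \ref{lem:comm.deriv} and Lemma \ref{lem:torsion.deriv}, bridged by the functoriality of differentiation under quotients recorded in Lemma \ref{lem:normal.factor}. The only points requiring a little care are confirming that $\partial_c f$ itself factors through $G/G_{k+1}$, so that $\widehat{\partial_c f}$ is defined (this follows from the remark that derivatives of functions factoring through a quotient again factor through it), and that $\overline G_{k+1}$ is normal, so that $G/\overline G_{k+1}$ makes sense; both of these are already available above.
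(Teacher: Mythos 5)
Your proof is correct and is essentially the paper's own argument: the paper's proof reads ``By Lemmas \ref{lem:normal.factor} and \ref{lem:comm.deriv} we may assume that $G_{k+1}=\{1\}$, and so the result follows from Lemma \ref{lem:torsion.deriv}'', which is exactly your reduction to the quotient $G/G_{k+1}$, where elements of $\overline G_{k+1}$ become torsion. You have merely unpacked the same three-lemma composition in more explicit detail, including the (correct) bookkeeping that $\partial_c f$ itself factors through the quotient so that Lemma \ref{lem:normal.factor} applies.
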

\begin{proof}
By Lemmas \ref{lem:normal.factor} and \ref{lem:comm.deriv} we may assume that $G_{k+1}=\{1\}$, and so the result follows from Lemma \ref{lem:torsion.deriv}.
\end{proof}

\begin{lemma}\label{lem:gen.coms}
Let $G$ be a group, and suppose that $x\in\overline G_i$ and $y\in\overline G_j$. Then $[x,y]\in\overline G_{i+j}$.
\end{lemma}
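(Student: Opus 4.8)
The plan is to pass to a torsion-free nilpotent quotient of $G$ in which the terms of the lower central series are \emph{isolated}, thereby reducing the claim to the standard commutator inclusion $[G_i,G_j]\subseteq G_{i+j}$. Accordingly, I would set $\hat G=G/\overline G_{i+j}$ and write $\pi\colon G\to\hat G$ for the quotient map, noting that $\overline G_{i+j}$ is normal by Lemma \ref{lem:tors.finite}. The point of this particular quotient is that, by definition \eqref{eq:gen.com}, $\overline G_{i+j}$ is the preimage in $G$ of the torsion subgroup $t(G/G_{i+j})$ of the nilpotent group $G/G_{i+j}$; hence $\hat G\cong(G/G_{i+j})/t(G/G_{i+j})$ is torsion-free nilpotent, using the fact \cite[\S5.2]{Robinson} that the torsion elements of a nilpotent group form a subgroup with torsion-free quotient. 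Moreover $G_{i+j}\subseteq\overline G_{i+j}$, so $\pi(G_{i+j})=\{1\}$ and therefore $\hat G_{i+j}=\{1\}$.

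Since $x\in\overline G_i$ there is some $m$ with $x^m\in G_i$, and hence $\pi(x)^m=\pi(x^m)\in\pi(G_i)=\hat G_i$. The crucial step is now to invoke the classical fact that in a torsion-free nilpotent group every term of the lower central series is isolated, i.e.\ each quotient $N/N_i$ is torsion-free. Applied in $\hat G$, this turns the relation $\pi(x)^m\in\hat G_i$ into $\pi(x)\in\hat G_i$, and symmetrically $\pi(y)\in\hat G_j$. The elementary inclusion $[\hat G_i,\hat G_j]\subseteq\hat G_{i+j}$ then yields $[\pi(x),\pi(y)]\in\hat G_{i+j}=\{1\}$, so that $[x,y]\in\ker\pi=\overline G_{i+j}$, which is exactly the assertion.

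I expect the isolation property of the lower central series to be the only substantive ingredient; the remainder is bookkeeping with the defining property of $\overline G_{i+j}$ and the standard inclusion $[G_i,G_j]\subseteq G_{i+j}$. This isolation property---equivalently, the torsion-freeness of $G/G_i$ for torsion-free nilpotent $G$---is classical, and I would cite it from a standard reference such as \cite{Robinson}; if a self-contained treatment is preferred, it can be reduced, via the observation that an extension of a torsion-free group by a torsion-free group is again torsion-free, to the (also classical) torsion-freeness of the individual lower central factors $N_i/N_{i+1}$.
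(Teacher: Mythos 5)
Your reduction to a torsion-free nilpotent quotient is correct, and in fact parallels the last line of the paper's own proof (which likewise reduces to the nilpotent case by passing to $G/\overline G_{i+j}$). The gap is in the step that is supposed to finish the argument: the ``classical fact'' that in a torsion-free nilpotent group $N$ every term of the lower central series is isolated, equivalently that each $N/N_i$ is torsion-free, is \emph{false}, and so is your fallback claim that the individual factors $N_i/N_{i+1}$ are torsion-free. (What is classical, due to Mal'cev, is isolation of the terms of the \emph{upper} central series.) For a counterexample, work in the Heisenberg group with rational entries, in the notation of Example \ref{ex:heis}: let $N$ be the subgroup generated by $a=\IP{1,0,0}$, $b=\IP{0,1,0}$ and $c=\IP{0,0,1/2}$. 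Then $N$ is torsion-free and nilpotent of class $2$, the element $c$ is central and satisfies $c^2=[a,b]$, and $N_2=[N,N]=\langle\IP{0,0,1}\rangle$. Hence $cN_2$ is a non-trivial element of finite order in $N/N_2$, so $N_2$ is not isolated in $N$ and $N_1/N_2\cong\Z^2\times(\Z/2\Z)$ has torsion. This failure is precisely the reason the paper introduces the subgroups $\overline G_i$ in the first place.

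Moreover, the failure occurs in exactly the situation your argument produces. Taking $G=N$, $i=2$, $j=1$ above, we have $\overline N_3=t(N)=\{1\}$, so $\hat G=N$ and $\pi$ is the identity; with $x=c\in\overline N_2$, your step ``$\pi(x)^m\in\hat G_i$ implies $\pi(x)\in\hat G_i$'' asserts $c\in N_2$, which is false (the conclusion of the lemma still holds in this example only because $c$ happens to be central). What remains after your reduction---that elements having powers in $\hat G_i$ and $\hat G_j$ must have commutator in $\hat G_{i+j}$---is the real content of the lemma; the only correct general statement in that direction, namely $[\overline{\hat G}_i,\overline{\hat G}_j]\subseteq\overline{\hat G}_{i+j}$, is the lemma itself, so invoking it would be circular. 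The paper instead does genuine work in the nilpotent case: it uses \cite[Proposition B.2]{nilp.frei} to write $[x,y]^{m^2}$ as $[x^m,y^m]$ times commutators of higher weight involving both $x$ and $y$ (more generally it treats all commutator forms at once), and then inducts downward on the weight. An ingredient of this kind is needed to close the gap.
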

\begin{proof}
We prove the more general statement that if $\alpha$ is a \emph{commutator form of weight $r$} in the sense of \cite[Definition 3.2]{nilp.frei}, and if $x_1,\ldots,x_r$ satisfy $x_i\in\overline G_{j_i}$, then $\alpha(x_1,\ldots,x_r)\in\overline G_{\sum_{i=1}^rj_i}$. We prove the claim first in the case that $G$ is nilpotent. Writing $l$ for the nilpotency class of $G$, we proceed by induction on $l-r$, noting that when $r>l$ the required statement is trivial, since $\alpha(x_1,\ldots,x_r)=1$ in that case.

Let $m$ be such that $x_i^m\in G_{j_i}$ for each $i$, and note therefore that
\begin{equation}\label{eq:comm.form}
\alpha(x_1^m,\ldots,x_r^m)\in G_{\sum_{i=1}^rj_i}.
\end{equation}
It follows directly from \cite[Proposition B.2]{nilp.frei} that there are commutators $\eta_1,\ldots,\eta_n$ of weight greater than $r$ in the $x_i$ in the sense of \cite[Definition 3.1]{nilp.frei}, with each $x_i$ featuring in each $\eta_t$ at least once, such that
\begin{equation}\label{eq:comm.form2}
\alpha(x_1,\ldots,x_r)^{m^r}=\alpha(x_1^m,\ldots,x_r^m)\eta_1\cdots\eta_n.
\end{equation}
Since each $x_i$ features in each $\eta_t$ at least once, the induction hypothesis implies that each $\eta_t\in\overline G_{\sum_{i=1}^rj_i}$, and so (\ref{eq:comm.form}) and (\ref{eq:comm.form2}) combine to imply that $\alpha(x_1,\ldots,x_r)^{m^r}\in\overline G_{\sum_{i=1}^rj_i}$, and hence that $\alpha(x_1,\ldots,x_r)\in\overline G_{\sum_{i=1}^rj_i}$, as claimed.

The claim in the general case follows from the claim in the nilpotent case since the group $G/\overline G_{\sum_{i=1}^rj_i}$ is nilpotent.
\end{proof}

\begin{proof}[Proof of Proposition \ref{prop:comm.deriv}]
The first claim is that for every $c\in\overline G_i$ and every $k$ we have $\p_c(P^k(G))\subset P^{k-i}(G)$. We prove this by induction on $k-i$, noting that when $i>k$ it is immediate from Lemma \ref{lem:fact.through.tor.free.nilp}.

Let $x\in G$ and $c\in\overline G_i$, and observe that \eqref{eq:deriv.by.comm} and Corollary \ref{cor:P^k.G.invariant} imply that
\begin{align*}
\p_x\p_c(P^k(G))   &\subset \p_c\p_x(P^k(G)) + \p_{[x,c]}(P^k(G)) \\
    &\subset \p_c(P^{k-1}(G)) + \p_{[x,c]}(P^k(G)).
\end{align*}
Lemma \ref{lem:gen.coms} implies that $[x,c] \in \overline G_{i+1}$, so the induction hypothesis implies that $\p_x\p_c(P^k(G))\subset P^{k-i-1}(G)$. Since $x$ was arbitrary, this implies that $\p_c(P^k(G))\subset P^{k-i}(G)$, and the claim is proved.

The proof for $\p^c$ is similar.
\end{proof}

\subsection{Coordinate systems on finitely generated groups}
Let $G$ be a finitely generated group. Lemma \ref{lem:gen.coms} implies that the groups $\overline G_i/\overline G_{i+1}$ are all torsion-free abelian; write $d_i$ for the rank of $\overline G_i/\overline G_{i+1}$, and define $n_i=d_1+\ldots+d_i$.

\begin{definition}\label{def:coord.syst}
Let $G$ be a finitely generated group. For each $i\in\N$ let $e_{n_{i-1}+1},\ldots,e_{n_i}$ be elements whose images in $G/\overline G_{i+1}$ form a basis for $\overline G_i/\overline G_{i+1}$. Then for each $k\in\N$, every element of $G$ can be represented, modulo $\overline G_{k+1}$, by a unique expression of the form
\begin{equation}\label{eq:coords.def}
x\overline G_{k+1}=e_1^{x_1}\cdots e_{n_k}^{x_{n_k}}\overline G_{k+1}
\end{equation}
with $x_i\in\Z$. Moreover, the value of each $x_i$ is independent of the choice of $k$, and so this defines, for each $x\in G$, a unique (possibly finite) sequence $x_1,x_2,\ldots$.

We call $e_1,e_2,\ldots$ a \emph{coordinate system} on $G$.
For each $x\in G$, we define the sequence $x_1,x_2,\ldots$ in the expression (\ref{eq:coords.def}) to be the \emph{coordinates of $x$} with respect to the coordinate system $e_1,e_2,\ldots$.
\end{definition}
\begin{definition}[Coordinate polynomial on $G$]\label{def:coord.poly}
Let $G$ be a finitely generated group with coordinate system $e_1,e_2,\ldots$. Then a \emph{coordinate monomial} on $G$ with respect to $e_1,e_2,\ldots$ is a function $q:G\to\C$ of the form
\[
q(x)=\lambda x_1^{a_1}\cdots x_r^{a_r},
\]
with $\lambda\in\C$, $r\le n_k$, each $a_i$ a non-negative integer, and $x_1,x_2,\ldots,x_r$ the coordinates of $x$ given by \eqref{eq:coords.def}.
A \emph{coordinate polynomial} on $G$ with respect to $e_1,e_2,\ldots$ is a finite sum of coordinate monomials.

For each $e_i$ we define $\sigma(i)=\sup\{k\in\N:e_i\in\overline G_k\}$. We then define the \emph{degree} of the monomial $q(x)=\lambda x_1^{a_1}\cdots x_r^{a_r}$ by $\deg q=\sigma(1)a_1+\cdots+\sigma(r)a_r$.
If $q_1,\ldots,q_t$ are monomials then we define the \emph{degree} of the polynomial $p=q_1+\cdots+q_t$ by $\deg p=\max_i\deg q_i$.
\end{definition}

\begin{example}\label{ex:heis}
Let $H$ be the ($3$-dimensional) Heisenberg group over $\Z$, defined by 
$$ H = H_3(\Z) = \left\{ \IP{x,y,z} \ | \ x,y,z \in \Z \right\} \qquad \IP{x,y,z} := \left[  
\begin{smallmatrix}
1 & x & z \\
0 & 1 & y \\
0 & 0 & 1
\end{smallmatrix}
\right] , $$
with the operation of matrix multiplication. The group $H$ is finitely generated, by $\{\IP{1,0,0},\IP{0,1,0}\}$, for example. The only degree-$0$ coordinate polynomials on any group are the constant functions. The degree-$1$ coordinate polynomials on $H$ are spanned by the functions mapping $\IP{x,y,z}$ to
\[
x \ , \ y.
\]
The degree-$2$ coordinate polynomials are spanned by the functions mapping $\IP{x,y,z}$ to
$$ x^2 \ , \ y^2 \ , \ xy \ , \ z . $$
The degree-$3$ coordinate polynomials are spanned by the functions mapping $\IP{x,y,z}$ to
$$ x^3 \ , \ y^3 \ , \ x^2 y \ , \ x y^2 \ , \ xz \ , \ yz . $$
\end{example}

Note that the degree of coordinate polynomials behaves similarly under multiplication to the standard degree of polynomials on $\R^d$, in that if $p,p':G\to\R$ are coordinate polynomials
 then their pointwise product $pp'$ is also a coordinate polynomial on $G$, and satisfies
\begin{equation}\label{eq:hom.deg.prod}
\deg pp'=\deg p+\deg p'.
\end{equation}

\begin{prop}[Leibman]\label{thm:polys.equiv}
Let $G$ be a finitely generated group with coordinate system $e_1,e_2,\ldots$ and suppose that $f:G\to\C$. Then $f\in P^k(G)$ if and only if $f$ is a coordinate polynomial of degree at most $k$.
\end{prop}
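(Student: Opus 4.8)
The plan is to prove both directions of the equivalence by induction on the degree $k$, exploiting the filtration of $G$ by the generalised commutator subgroups $\overline G_i$ and the factorisation results already established. The key structural tool is Proposition \ref{prop:comm.deriv}, which tells us that differentiating in the direction of an element $c\in\overline G_i$ drops the polynomial degree by $i$; this exactly matches the weighting $\sigma(i)$ built into the definition of the degree of a coordinate polynomial, and it is this matching that makes the two notions agree.

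\textbf{Coordinate polynomials are polynomials.} For the ``if'' direction I would first reduce, using Lemma \ref{lem:deg.prod} and the multiplicativity of coordinate degree \eqref{eq:hom.deg.prod}, to showing that a single coordinate function $x\mapsto x_i$ lies in $P^{\sigma(i)}(G)$. One then argues that for each generator $e_m$ of $G$, the left derivative $\p_{e_m}(x_i)$ is again a coordinate polynomial of strictly smaller degree: applying $\p_{e_m}$ corresponds to a left multiplication $x\mapsto e_m x$, which by the Baker--Campbell--Hausdorff-type commutator identities (formalised via Lemma \ref{lem:gen.coms}) changes the coordinate $x_i$ by a coordinate polynomial in the \emph{lower-index} coordinates, i.e. by a polynomial of degree at most $\sigma(i)-1$. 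Iterating, after $\sigma(i)$ derivatives in generator directions one reaches constants, and then one more derivative kills them. Since the generators $e_m$ with $\sigma(m)\ge1$ generate $G$, the identity \eqref{eq:deriv.by.prod} lets one reduce an arbitrary $\p_u$ to a combination of such generator derivatives without increasing the degree, giving $x_i\in P^{\sigma(i)}(G)$.

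\textbf{Polynomials are coordinate polynomials.} This is the harder direction, and I expect it to be the main obstacle. I would again induct on $k$. Given $f\in P^k(G)$, Proposition \ref{prop:comm.deriv} shows $f$ factors through $G/\overline G_{k+1}$, so only finitely many coordinates $x_1,\ldots,x_{n_k}$ are relevant. For each coordinate direction $e_i$ with $\sigma(i)\le k$, the derivative $\p_{e_i}f$ lies in $P^{k-\sigma(i)}(G)$, and by induction is a coordinate polynomial of degree at most $k-\sigma(i)$. The plan is then to integrate: one recovers $f$ from its derivatives along the coordinate directions. Because the coordinate system is built from the lower central series, moving in the $e_i$ direction changes $x_i$ by $1$ while perturbing only the coordinates of strictly higher index (those $x_j$ with $\sigma(j)>\sigma(i)$), so one can reconstruct $f$ coordinate by coordinate, peeling off the top-weight coordinates first. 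The delicate point is that the higher coordinates get shifted by polynomial correction terms under translation, so one must check that the antiderivative obtained at each stage is still a coordinate polynomial of the correct degree and that these reconstructions are mutually consistent.

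\textbf{Main obstacle.} The real difficulty is organising the induction in the ``only if'' direction so that the noncommutativity of $G$ is controlled: the coordinate of $x$ after left translation $e_i x$ is not simply $x_i+1$ but involves commutator corrections governed by Lemma \ref{lem:gen.coms}, and one must verify that these corrections never raise the coordinate degree. I would handle this by processing the coordinates in decreasing order of index (equivalently, working up the filtration $\overline G_{k}\supset\overline G_{k-1}\supset\cdots$), so that at each step the corrections live in coordinates already accounted for; a clean way to package this is to note that $f$ restricted to each coset of $\overline G_{i+1}/\overline G_{i+2}$ behaves like an ordinary polynomial in the top free-abelian coordinates, whose polynomiality follows from the abelian case together with the vanishing of sufficiently high derivatives. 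This reduces the whole statement to the torsion-free abelian situation, where the equivalence of ``vanishing higher differences'' and ``being a polynomial in the standard coordinates'' is classical.
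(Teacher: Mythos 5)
The genuine gap is in your ``if'' direction, specifically in your proof that the coordinate function $x\mapsto x_i$ lies in $P^{\sigma(i)}(G)$. Your key claim is that $\p_{e_m}(x_i)$ is a \emph{coordinate polynomial} in the lower-index coordinates of weighted degree at most $\sigma(i)-1$, which you say is ``formalised via Lemma \ref{lem:gen.coms}''. But Lemma \ref{lem:gen.coms} only gives the filtration property $[\overline G_i,\overline G_j]\subset\overline G_{i+j}$; it says nothing about the coordinates of $e_mx$ being \emph{polynomial} functions of the coordinates of $x$, let alone with the correct weighted degree bounds. Even Lemma \ref{lem:coords.mult.add}, whose proof does use repeated commutator manipulation, is purely qualitative: it asserts that the $j$th coordinate of $xu$ \emph{depends only on} the first $j$ coordinates of $x$ and $u$, not that this dependence is polynomial. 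The polynomiality of the group law in these coordinates, with degrees respecting the weights $\sigma$, is the Hall--Petresco/Mal'cev theory, and it is essentially equivalent to the proposition you are trying to prove; asserting it without proof makes the argument circular, or at best rests it on an unproved theorem as hard as the target.

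The paper circumvents exactly this issue by a different route: the identity map $G\to G$ lies in $\poly(G,\overline G_\bullet)$ (Lemma \ref{lem:identity.poly}); the set $\poly(H,\overline G_\bullet)$ is a \emph{group} under pointwise multiplication (the Lazard--Leibman--Green--Tao theorem, Proposition \ref{prop:llgt}); and one then peels coordinates off the identity map one layer of the filtration at a time (Proposition \ref{prop:leib.3.14}) to conclude $x_i\in P^{\sigma(i)}(G)$ (Proposition \ref{prop:coord.mon.poly}). To repair your argument you must either import that group-structure theorem, as the paper does, or prove the polynomial multiplication law with weighted degree bounds directly via basic commutators --- neither of which is a routine step. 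By contrast, your ``only if'' direction, once you adopt your own fallback of processing coordinates in decreasing index, is essentially the paper's argument: fixing $x_1,\ldots,x_{j-1}$, Lemma \ref{lem:coords.mult.add} identifies $\p_{e_j}$ with the one-variable difference operator, Proposition \ref{prop:comm.deriv} forces $(\p_{e_j})^{\lfloor k/\sigma(j)\rfloor+1}f=0$, so $f$ is an ordinary polynomial in $x_j$ whose coefficients are functions of the lower coordinates handled by induction; this expansion-in-powers-of-$x_j$ packaging also dissolves the ``integration''/consistency worry you raise, since one never integrates, only differentiates.
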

In combination with the main results of this paper, Proposition \ref{thm:polys.equiv} allows us to describe quite explicitly the spaces $H^k(G,\mu)$ for a nilpotent group $G$. For example, the reader may care to check that, on the Heisenberg group $H_3(\Z)$ defined in Example \ref{ex:heis}, the degree-$2$ coordinate polynomials mapping $\IP{x,y,z}$ to
$$  1 \ , \ x \ , \ y \ , \ x^2-y^2 \ , \ xy \ , \ z $$
are harmonic with respect to the uniform measure $\mu$ on the symmetric generating set
\[
\{\IP{\pm1,0,0},\IP{0,\pm1,0}\},
\]
and linearly independent. Theorems \ref{thm:alex.cosets} and \ref{thm:lap_poly_img}, and Proposition \ref{thm:polys.equiv}, therefore imply that these polynomials form a basis for the space $H^2(H_3(\Z),\mu)$.

Without the statement about degrees, Proposition \ref{thm:polys.equiv} is almost a direct consequence of \cite[Proposition $3.12$]{leibman}.
The statement about the degrees of the coordinate maps  is closely related to \cite[Proposition $3.14$]{leibman}.
However, our definition of coordinates involves the subgroups $\overline{G}_i$ rather than the usual lower central series, in order to handle the possibility that $G_i/G_{i+1}$ has torsion,
and so we give the proof in full here.

As a step in the proof of Proposition \ref{thm:polys.equiv} we prove the following result.
\begin{prop}\label{prop:coord.mon.poly}
Let $G$ be a finitely generated group with coordinate system $e_1,e_2,\ldots$. Then the coordinate map $G\to\R$ defined by $x\mapsto x_i$ belongs to $P^{\sigma(i)}(G)$.
\end{prop}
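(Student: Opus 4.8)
The plan is to prove the equivalent statement that $c_i\in P^{\sigma(i)}(G)$, where $c_i\colon G\to\C$ is the coordinate function $x\mapsto x_i$, by induction on $j:=\sigma(i)$ (the statement being quantified over all finitely generated groups). Write $n_{j-1}<i\le n_j$, so that $c_i$ is a ``level-$j$'' coordinate. First I would reduce to the case where $G$ is torsion-free nilpotent of class at most $j$: since the coordinates $x_1,\dots,x_{n_j}$ are determined by $x$ modulo $\overline G_{j+1}$, the function $c_i$ factors through $G/\overline G_{j+1}$, so by Lemma~\ref{lem:normal.factor} it is enough to treat $\overline G_{j+1}=\{1\}$. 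In the base case $j=1$ the group $G/\overline G_2$ is torsion-free abelian and $c_i$ is a homomorphism into $\C$, hence lies in $P^1(G)$. For the inductive step it suffices to show that $\p_g c_i\in P^{j-1}(G)$ for every $g\in G$; and by the product formula \eqref{eq:deriv.by.prod}, the invariance of $P^{j-1}(G)$ under the left action (Corollary~\ref{cor:P^k.G.invariant}), and closure under sums, it is enough to check this for $g=e_l$ ranging over the finitely many coordinate generators.

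The key structural input is that, by Lemma~\ref{lem:gen.coms}, $[\overline G_j,G]\subseteq\overline G_{j+1}=\{1\}$, so $Z:=\overline G_j$ is central and free abelian with basis $e_{n_{j-1}+1},\dots,e_{n_j}$, while $G/Z$ is torsion-free nilpotent of class at most $j-1$ and inherits the coordinate system $e_1,\dots,e_{n_{j-1}}$. Writing $\pi\colon G\to G/Z$ and collecting the central generators to the right gives a product formula $c_i(xy)=c_i(x)+c_i(y)+\beta_i(\pi x,\pi y)$, where $\beta_i$ is the $i$th component of the cocycle of this central extension. Hence $\p_g c_i(x)=c_i(g)+\beta_i(\pi g,\pi x)$, and since $c_i(g)$ is constant the problem reduces to showing that $x\mapsto\beta_i(\pi g,\pi x)$ lies in $P^{j-1}(G)$. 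When $e_l$ is itself a level-$j$ generator then $\pi e_l$ is trivial, so this function is constant (a central left multiplication simply shifts the level-$j$ coordinates by a fixed amount) and $\p_{e_l}c_i$ is constant; the only remaining case is $g=e_l$ with $\sigma(l)<j$.

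In this remaining case $\beta_i(\pi e_l,\pi x)$ equals the level-$j$ coordinate of $e_l\cdot e_1^{x_1}\cdots e_{n_{j-1}}^{x_{n_{j-1}}}$, and it factors through $G/Z$, so the task is to identify it as an element of $P^{j-1}(G/Z)$. Here I would run the collection process, moving $e_l$ rightward into normal form: each interchange produces commutators, and by Lemma~\ref{lem:gen.coms} a commutator $[e_l,e_{m_1},\dots,e_{m_r}]$ lands in $\overline G_{\sigma(l)+\sum_t\sigma(m_t)}$, so it can affect the level-$j$ coordinate only when $\sum_t\sigma(m_t)=j-\sigma(l)\le j-1$. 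Consequently $\beta_i(\pi e_l,\pi x)$ is an integer polynomial in the lower coordinates $x_1,\dots,x_{n_{j-1}}$ whose every monomial $x_{m_1}\cdots x_{m_r}$ has weighted degree $\sum_t\sigma(m_t)\le j-1$. By the inductive hypothesis each lower coordinate function $c_m$ lies in $P^{\sigma(m)}$, so Lemma~\ref{lem:deg.prod} gives that each such monomial lies in $P^{j-1}(G)$, and therefore so does the whole sum. This yields $\p_{e_l}c_i\in P^{j-1}(G)$ and completes the induction.

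The main obstacle is the last step: making the collection process rigorous and, crucially, controlling the weighted degree of the resulting polynomial in the coordinates. The bookkeeping that each commutator strictly raises the level---encapsulated in Lemma~\ref{lem:gen.coms}---is exactly what guarantees both that only finitely many commutator terms contribute and that every contributing monomial has weighted degree at most $j-1$; without this weight control one would recover only that $c_i$ is some polynomial, not that its degree is bounded by $\sigma(i)$. This is also the point at which using the isolators $\overline G_i$ rather than the $G_i$ matters, since it is what makes the quotients torsion-free and the weights additive.
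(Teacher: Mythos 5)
Your overall framework is sound and genuinely different from the paper's: you reduce to the torsion-free nilpotent case exactly as the paper does (via Lemma \ref{lem:normal.factor}), but then argue by induction on $\sigma(i)$ using the central extension $1\to\overline G_j\to G\to G/\overline G_j\to1$ and its cocycle, whereas the paper instead applies Lemma \ref{lem:identity.poly} and Proposition \ref{prop:leib.3.14}, whose proof rests on the Lazard--Leibman--Green--Tao theorem (Proposition \ref{prop:llgt}) that $\poly(H,\overline G_\bullet)$ is a group under pointwise multiplication. Your reductions are all correct: factoring through $G/\overline G_{j+1}$; the reduction to the generators $e_l$ via \eqref{eq:deriv.by.prod} and Corollary \ref{cor:P^k.G.invariant} (legitimate, since once $\overline G_{j+1}=\{1\}$ the elements $e_1,\ldots,e_{n_j}$ do generate $G$); the centrality and freeness of $Z=\overline G_j$ via Lemma \ref{lem:gen.coms}; the identity $\p_{e_l}c_i(x)=c_i(e_l)+\beta_i(\pi e_l,\pi x)$; and the final assembly from the inductive hypothesis and Lemma \ref{lem:deg.prod}.

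However, there is a genuine gap at exactly the step you flag as the main obstacle: the assertion that $\beta_i(\pi e_l,\pi x)$, i.e.\ the level-$j$ coordinate of $e_l\cdot e_1^{x_1}\cdots e_{n_{j-1}}^{x_{n_{j-1}}}$, is an integer polynomial in $x_1,\ldots,x_{n_{j-1}}$ all of whose monomials have weighted degree at most $j-\sigma(l)$. This is true, but it is not a routine consequence of Lemma \ref{lem:gen.coms}: running the collection process produces iterated commutators raised to exponents that are products of binomial coefficients $\binom{x_m}{t}$, the collected commutators must themselves be rewritten in normal form and re-collected, and one has to verify that throughout this process the ordinary degree in each variable $x_m$ is matched against the multiplicity of $e_m$ in the relevant commutator in a way that preserves the \emph{weighted} degree bound. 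That bookkeeping is essentially the Hall--Petrescu-type content underlying Proposition \ref{prop:llgt}, and the statement you need is in effect a special case of Proposition \ref{prop:leib.3.14}; so the unproven step carries roughly the same difficulty as the proposition itself. Note also that you cannot shortcut it by invoking Proposition \ref{prop:comm.deriv}, since that result presupposes that the function being differentiated is already known to be a polynomial, which is precisely what is at stake here. As written, then, your proposal identifies the right structure and the right difficulty, but the core step remains a sketch; to complete it you would either have to carry out the weighted collection argument in full, or follow the paper in deriving the statement from the group property of $\poly(H,\overline G_\bullet)$ applied to the identity map (Lemma \ref{lem:identity.poly} together with Proposition \ref{prop:leib.3.14}).
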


At this point it is convenient to recall the definition of \emph{polynomial mappings} between general groups, following Leibman \cite{leibman}. The definition is the natural generalisation of the derivative definition of complex-valued polynomials on a group. Specifically, for groups $H,G$ and a map $f:H\to G$ one defines $\p^h f(x)= f(xh)(f(x))^{-1}$ for every $x,h\in H$, and then the space $P^k(H,G)$ of polynomial mappings of degree at most $k$ from $H$ to $G$ is defined by
\[
P^k(H,G)=\{f:H\to G \ \big| \ \p^{h_1}\cdots\p^{h_{k+1}}f(x)=1\text{ for all $x,h_1,\ldots,h_k\in H$}\}.
\]
Note that $P^k(H)=P^k(H,\C)$.

For polynomial mappings between groups there is also a finer notion of degree, which is not a single parameter. Leibman considered the notion of ``lc-degree'' for polynomial mappings taking values in nilpotent groups.
In order to capture this finer notion, Green and Tao \cite{gt} introduced a slightly more general formulation of polynomial mappings with respect to a \emph{filtration}. If $G$ is a nilpotent group then a \emph{filtration} $F_\bullet$ of $G$ is a sequence of subgroups
\[
G=F_0=F_1\geq F_2\geq \ldots\geq F_r=\{1\}
\]
with the property that $[F_i,F_j]\subset F_{i+j}$ for every $i,j$. The lower central series $G_\bullet$ of a nilpotent group $G$ is a filtration \cite[Corollary 10.3.5]{hall}. Lemma \ref{lem:gen.coms} implies that the sequence $\overline G_\bullet$ of generalised commutator subgroups is also a filtration.

The set $\poly(H,F_\bullet)$ of \emph{polynomial maps} from a group $H$ into a nilpotent group $G$ with respect to a filtration
$F_\bullet$ of $G$ is then defined by
\[
\poly(H,F_\bullet)=\{f:H\to G \ \big| \ \p^{h_1}\cdots\p^{h_i}f(x)\in F_i\text{ for all $i$ and all $x, h_1,\ldots,h_i\in H$}\}.
\]
Note that  a function $f:G \to \C$ is a polynomial of degree at most $k$ according to our previous definition if and only if it is  a polynomial with respect to the filtration
\[
\C=F_0=F_1=   \ldots =F_k   \geq F_{k+1}=\{0\}
\]
Note also that if $G_\bullet$ is the lower central series and $F_\bullet$ is any other filtration then $G_i\subset F_i$ for each $i$, as is easily seen by induction. This implies in particular that
\begin{equation}\label{eq:filt.lcs}
\poly(H,G_\bullet)\subset\poly(H,F_\bullet).
\end{equation}
\begin{lemma}\label{lem:identity.poly}
Let $G$ be a finitely generated group, let $F_\bullet$ be a filtration of $G$, and let $\varphi:G\to G$ be the identity map. Then $\varphi\in\poly(G,F_\bullet)$.
\end{lemma}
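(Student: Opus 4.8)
The plan is to unwind the definition of $\poly(G,F_\bullet)$ and show directly that every iterated right-derivative of $\varphi=\id$ of order $i$ takes values in $F_i$. Computing the first derivative gives $\p^h\varphi(x)=\varphi(xh)\varphi(x)^{-1}=xhx^{-1}$, which lies in $F_1=G$; this already suggests that higher-order derivatives should be conjugates of iterated commutators of the $h_j$, whose weight grows with the order of differentiation.

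Concretely, I would prove by induction on $i$ the following structural claim: for all $h_1,\dots,h_i\in G$ there is an element $w_i=w_i(h_1,\dots,h_i)\in F_i$, not depending on $x$, with
\[
\p^{h_1}\cdots\p^{h_i}\varphi(x)=x\,w_i\,x^{-1}\qquad\text{for all }x\in G.
\]
The base case $i=1$ holds with $w_1=h_1\in G=F_1$ by the computation above.

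For the inductive step, write $g=\p^{h_2}\cdots\p^{h_i}\varphi$, so that by hypothesis $g(x)=x\,w_{i-1}\,x^{-1}$ with $w_{i-1}\in F_{i-1}$. Applying the right-derivative formula,
\[
\p^{h_1}g(x)=g(xh_1)\,g(x)^{-1}=xh_1w_{i-1}h_1^{-1}x^{-1}\cdot xw_{i-1}^{-1}x^{-1}=x\,(h_1w_{i-1}h_1^{-1}w_{i-1}^{-1})\,x^{-1}.
\]
Since $h_1w_{i-1}h_1^{-1}w_{i-1}^{-1}=[h_1^{-1},w_{i-1}^{-1}]$, setting $w_i=[h_1^{-1},w_{i-1}^{-1}]$ gives $w_i\in[F_1,F_{i-1}]\subseteq F_i$ by the defining property of the filtration, which completes the induction.

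Finally, to conclude that $\p^{h_1}\cdots\p^{h_i}\varphi(x)=xw_ix^{-1}$ itself lies in $F_i$ (rather than merely being a conjugate of an element of $F_i$), I would note that each $F_i$ is normal in $G$: indeed $[G,F_i]=[F_1,F_i]\subseteq F_{i+1}\subseteq F_i$, so conjugation by any element of $G$ preserves $F_i$. Thus $xw_ix^{-1}\in F_i$ for every $x$, and hence $\varphi\in\poly(G,F_\bullet)$. The only genuinely delicate points are keeping the commutator bookkeeping consistent with the convention $[a,b]=a^{-1}b^{-1}ab$ and observing the normality of the $F_i$; neither presents a real obstacle, so the argument is essentially a single clean induction.
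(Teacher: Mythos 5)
Your proof is correct and is essentially the paper's own argument: the same induction showing that every iterated right derivative of the identity map has the form $x\,w_i\,x^{-1}$ with $w_i$ an iterated commutator of the increments, driven by the same identity $g(xh)g(x)^{-1}=x\,(h w h^{-1} w^{-1})\,x^{-1}=x\,[h^{-1},w^{-1}]\,x^{-1}$. The only difference is organisational: the paper first reduces to the lower central series filtration $G_\bullet$ (via the containment $G_i\subseteq F_i$), where normality of the relevant subgroups is standard, whereas you run the induction directly in $F_\bullet$ and verify normality of each $F_i$ by hand via $[G,F_i]=[F_1,F_i]\subseteq F_{i+1}\subseteq F_i$ --- a point the paper leaves implicit, and which your write-up handles correctly.
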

\begin{proof}
By \eqref{eq:filt.lcs} it is sufficient to show that $\varphi\in\poly(G,G_\bullet)$. Given $y_1,y_2,\ldots\in G$, define $\alpha_1(y_1) = y_1$ and $\alpha_{k}(y_1,\ldots,y_{k}) = [y_{k}^{-1} , \alpha_{k-1}(y_1,\ldots,y_{k-1})^{-1} ]$ for $k=2,3,\ldots$.

We claim that for any $k \geq 1$ 
and any $y_1,\ldots,y_k \in G$ we have
$$ \p^{y_k} \cdots \p^{y_1} \varphi(x) =  
x \alpha_k(y_1,\ldots,y_k) x^{-1},  $$
which immediately implies the desired result, since $\alpha_{k}(y_1,\ldots,y_{k})\in G_k$.
We prove this claim by induction on $k$.
For $k=1$ we have $\p^y \varphi(x) = x y x^{-1}$ and the claim holds.
For $k>1$ we have, by induction,
\begin{align*}
\p^{y_k} \p^{y_{k-1}} \cdots \p^{y_1} \varphi(x) & =   \p^{y_{k-1}} \cdots \p^{y_1} \varphi(x y_k) 
\big( \p^{y_{k-1}} \cdots \p^{y_1} \varphi(x)  \big)^{-1} \\
& = x y_k    \alpha_{k-1}(y_1,\ldots,y_{k-1})  y_k^{-1} x^{-1} 
x \alpha_{k-1}(y_1,\ldots,y_{k-1})^{-1}   x^{-1} \\
& = x [ y_k^{-1} , \alpha_{k-1}(y_1,\ldots,y_{k-1})^{-1} ] x^{-1}\\
&= x \alpha_k(y_1,\ldots,y_k) x^{-1},
\end{align*}
as claimed.
\end{proof}

An important result about $\poly(H,F_\bullet)$ is that it is in fact a group under pointwise multiplication. This is due to Lazard \cite{lazard} when $G$ is a nilpotent Lie group, and to Leibman \cite[Proposition 3.4]{leibman} when $G$ is an arbitrary finitely generated nilpotent group. Leibman's result is stated in terms of the so-called ``lc-degree'', which corresponds to cases in which $F_\bullet$ is the lower central series or certain refinements of it. The following formulation is due to Green and Tao \cite{gt}.
\begin{prop}[Lazard; Leibman; Green--Tao {\cite[Proposition 6.2]{gt}}]\label{prop:llgt}
Let $G$ be a nilpotent group with a filtration $F_\bullet$, and let $H$ be a group. Then the set $\poly(H,F_\bullet)$ forms a group under the operation of taking pointwise products.
\end{prop}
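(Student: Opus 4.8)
The plan is to verify the group axioms directly for pointwise multiplication. Associativity is inherited from $G$, and the neutral element is the constant map $x\mapsto 1_G$, which lies in $\poly(H,F_\bullet)$ since all of its derivatives are trivial; so the real content is closure under pointwise products and pointwise inverses. I would begin by recording the two product rules for the right derivative, each of which follows from $\p^hu(x)=u(xh)u(x)^{-1}$ by a one-line computation:
\[
\p^h(fg)(x)=\p^hf(x)\cdot f(x)\,\p^hg(x)\,f(x)^{-1},\qquad \p^h(f^{-1})(x)=f(x)^{-1}\,(\p^hf(x))^{-1}\,f(x).
\]
Thus differentiating a product or an inverse produces derivatives of $f$ and $g$ together with a \emph{conjugation} by $f$, and this conjugation is the only obstruction to a naive induction.

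To organise the argument I would introduce the shifted families
\[
\poly_s=\{f:H\to G \ \big| \ \p^{h_1}\cdots\p^{h_i}f(x)\in F_{s+i}\text{ for all }i\ge0\text{ and all }x,h_1,\ldots,h_i\in H\},
\]
with the convention $F_j=\{1\}$ for $j\ge r$ and $F_j=G$ for $j\le 0$, so that $\poly_0=\poly(H,F_\bullet)$ while $\poly_s$ is the trivial group once $s\ge r$. Two elementary observations drive everything: differentiation raises the shift, $\p^h(\poly_s)\subseteq\poly_{s+1}$, and the families are nested, $\poly_{s'}\subseteq\poly_s$ for $s'\ge s$. The plan is then to prove, by \emph{downward} induction on $s$ starting from the trivial case $s\ge r$, the combined statement that each $\poly_s$ is closed under products and inverses and that the pointwise commutator satisfies $[\poly_s,\poly_t]\subseteq\poly_{s+t}$ for all $t$.

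The mechanism of the inductive step is as follows. Given $f,g\in\poly_s$, the factors $\p^hf,\p^hg$ in the product rule lie in $\poly_{s+1}$, and the conjugate can be written $f\,\p^hg\,f^{-1}=\p^hg\cdot[\p^hg,f^{-1}]$, so it differs from $\p^hg\in\poly_{s+1}$ by a commutator lying in $[\poly_{s+1},\poly_s]\subseteq\poly_{2s+1}\subseteq\poly_{s+1}$; since $\poly_{s+1}$ is already a group by induction, one concludes $\p^h(fg)\in\poly_{s+1}$, and as $(fg)(x)\in F_s\cdot F_s=F_s$ this gives $fg\in\poly_s$. Closure under inverses is handled identically from the second product rule, and the commutator containment $[\poly_s,\poly_t]\subseteq\poly_{s+t}$ is proved in parallel from the standard commutator identities, the product and inverse closure just established, and the defining relation $[F_i,F_j]\subseteq F_{i+j}$ of a filtration, which is exactly what guarantees that commutators land at the sum of the shifts.

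The genuinely routine parts are the two product-rule identities and the observation that differentiation shifts the filtration index. I expect the main obstacle to be the bookkeeping of the simultaneous induction: the statements about products, inverses and commutators are mutually dependent, and one must order them and track the shift indices carefully so that every correction term arising at level $s$ is controlled either by the group structure at level $s+1$ or by a commutator containment at a shift that has already been established. Arranging this so that no step is circular — rather than any individual calculation — is where the essential work lies, and it is precisely the filtration property $[F_i,F_j]\subseteq F_{i+j}$ that makes it possible.
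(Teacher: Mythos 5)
A preliminary remark: the paper does not actually prove Proposition \ref{prop:llgt} --- it quotes the result, crediting Lazard \cite{lazard} and Leibman \cite[Proposition 3.4]{leibman}, and cites Green--Tao \cite[Proposition 6.2]{gt} for this formulation. Your proposal must therefore be measured against those cited proofs, and in outline your strategy is exactly theirs: shifted classes $\poly_s$, the trivial base case $s\ge r$, downward induction, and the product rule together with the filtration relation $[F_i,F_j]\subseteq F_{i+j}$ as the driving identities. Your two differentiation rules, the identity $f\,\p^hg\,f^{-1}=\p^hg\cdot[\p^hg,f^{-1}]$, and the observations $\p^h(\poly_s)\subseteq\poly_{s+1}$ and $\poly_{s'}\subseteq\poly_s$ for $s'\ge s$ are all correct, and the product/inverse half of your inductive step genuinely closes, provided you establish inverse-closure at level $s$ \emph{before} product-closure (your product step invokes $[\p^hg,f^{-1}]\in[\poly_{s+1},\poly_s]$, which presupposes $f^{-1}\in\poly_s$; doing inverses first, via $\p^h(f^{-1})=(\p^hf)^{-1}\cdot[(\p^hf)^{-1},f]$ and the induction hypothesis at level $s+1$, repairs this).

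The genuine gap is the commutator containment $[\poly_s,\poly_t]\subseteq\poly_{s+t}$, which you defer as parallel bookkeeping; it is in fact the whole content of the theorem, and the inductive statement you propose is too weak to prove it. Take $a\in\poly_s$, $b\in\poly_t$, set $\alpha=\p^ha\in\poly_{s+1}$, $\beta=\p^hb\in\poly_{t+1}$, and write $x^y=y^{-1}xy$. The identities $[xy,z]=[x,z]^y[y,z]$ and $[x,yz]=[x,z][x,y]^z$ give the pointwise formula
\[
\p^h[a,b]\;=\;[\alpha,b]^a\cdot[a,b]\cdot[\alpha,\beta]^{ab}\cdot[a,\beta]^b\cdot[a,b]^{-1}.
\]
The factors $[\alpha,b]$, $[\alpha,\beta]$, $[a,\beta]$ have strictly larger shift-sum and are covered by your hypotheses, but the whole expression is a conjugate by $[a,b]$ itself: to control $[a,b]\,w\,[a,b]^{-1}=w\cdot[w,[b,a]]$ one must already know that $[b,a]$ is a polynomial map of shift $s+t$ --- precisely the statement being proved, at exactly the same shift-sum, so no ordering of the induction (downward in $s$, in $s+t$, or with an inner induction on $t$) makes it available. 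A second instance of the same defect: the conjugations by $a$ and $b$ above require commutator claims with second argument $a^{-1}$ or $b^{-1}$, and inverse-closure at a level $t<s$ is never available in your ordering. This is why the cited proofs are not routine: the induction hypothesis has to be strengthened so that it applies not merely to pointwise commutators of two polynomial maps but to all iterated commutator expressions in polynomial maps and their inverses (with shifts adding over all entries), so that terms such as $[w,[b,a]]$ fall under the hypothesis at strictly larger total weight, nilpotency guaranteeing termination; this is essentially what Leibman's commutator calculus accomplishes, and the Green--Tao--Ziegler line of argument instead obtains the group property from the Host--Kra cube-group machinery. Your write-up correctly senses that ``arranging this so that no step is circular'' is where the work lies, but as proposed the circle does not close: what is needed is a strictly stronger inductive statement, not better bookkeeping of the one you have.
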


We will deduce Proposition \ref{prop:coord.mon.poly} from the following slightly more general fact.
\begin{prop}\label{prop:leib.3.14}
Let $G$ be a torsion-free $\ell$-step nilpotent group with coordinate system $e_1,\ldots,e_{n_\ell}$, and let $H$ be a group. Suppose that $\varphi\in\poly(H,\overline G_\bullet)$. Then, writing $\varphi_1(h),\ldots,\varphi_{n_\ell}(h)$ for the coordinates of $\varphi(h)$ for $h\in H$, we have $\varphi_i\in P^{\sigma(i)}(H)$ for each $i$.
\end{prop}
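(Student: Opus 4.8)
The plan is to argue by induction on the nilpotency class $\ell$ of $G$, peeling off the top block of coordinates at each stage. The two engines are the fact that $\poly(H,\overline G_\bullet)$ is a group under pointwise multiplication (Proposition \ref{prop:llgt}) together with the fact that $\overline G_\bullet$ really is a filtration (Lemma \ref{lem:gen.coms}), plus an elementary observation about single-generator power maps. The base case $\ell=1$ is immediate: then $G$ is torsion-free abelian, so $G\cong\Z^{n_1}$, the filtration is $\overline G_1=G\ge\overline G_2=\{1\}$, and $\varphi\in\poly(H,\overline G_\bullet)$ says exactly that $\p^{h_1}\p^{h_2}\varphi=1$. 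Thus $\varphi$ is a degree-$1$ polynomial map into $\Z^{n_1}$ and each integer-valued coordinate $\varphi_i$ lies in $P^1(H)=P^{\sigma(i)}(H)$.

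The first technical ingredient I would isolate is the following building-block statement: if $g\in\overline G_s$ and $p:H\to\Z$ lies in $P^s(H)$, then $h\mapsto g^{p(h)}$ lies in $\poly(H,\overline G_\bullet)$. This is easy because all the relevant values are powers of the single element $g$ and hence commute, so a direct induction gives $\p^{h_1}\cdots\p^{h_i}(g^p)(x)=g^{(\p^{h_1}\cdots\p^{h_i}p)(x)}$. For $i\le s$ the value lies in $\overline G_s\subseteq\overline G_i$; for $i>s$ the exponent vanishes since $p\in P^s(H)$, so the value is the identity. In both cases it lies in $\overline G_i$, as required.

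For the inductive step, assume the result for class smaller than $\ell$. Since $G$ is torsion-free one checks that $\overline G_{\ell+1}=\{1\}$, that $\overline G_\ell$ is isolated in $G$, and hence that $\bar G:=G/\overline G_\ell$ is torsion-free of class at most $\ell-1$, with coordinate system given by the images of $e_1,\ldots,e_{n_{\ell-1}}$ and the same values of $\sigma$. The composite $\bar\varphi$ of $\varphi$ with the quotient map lies in $\poly(H,\overline{\bar G}{}_\bullet)$, so the inductive hypothesis gives $\varphi_i=\bar\varphi_i\in P^{\sigma(i)}(H)$ for all $i\le n_{\ell-1}$. By the building block each factor $h\mapsto e_i^{\varphi_i(h)}$ with $i\le n_{\ell-1}$ lies in $\poly(H,\overline G_\bullet)$, so by Proposition \ref{prop:llgt} the product $\chi(h)=e_1^{\varphi_1(h)}\cdots e_{n_{\ell-1}}^{\varphi_{n_{\ell-1}}(h)}$ does too, and hence so does $\rho:=\chi^{-1}\varphi$. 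By uniqueness of coordinates, $\rho(h)=e_{n_{\ell-1}+1}^{\varphi_{n_{\ell-1}+1}(h)}\cdots e_{n_\ell}^{\varphi_{n_\ell}(h)}$ takes values in $\overline G_\ell$, which by Lemma \ref{lem:gen.coms} is central and torsion-free abelian of rank $d_\ell$. Since $\rho$ is valued in this central abelian subgroup, its iterated right derivatives are computed inside $\overline G_\ell$, and the filtration condition at level $\ell+1$ forces $\p^{h_1}\cdots\p^{h_{\ell+1}}\rho=1$. Thus $\rho$ is a degree-at-most-$\ell$ polynomial map into $\overline G_\ell\cong\Z^{d_\ell}$, and each of its coordinates $\varphi_{n_{\ell-1}+j}$ lies in $P^\ell(H)=P^{\sigma(n_{\ell-1}+j)}(H)$, completing the induction.

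The conceptual content is light once Proposition \ref{prop:llgt} is available; the delicate part, and the step I expect to require the most care, is the torsion bookkeeping with the generalised commutator subgroups — verifying that $\overline G_\ell$ is isolated and central, that $\overline G_{\ell+1}=\{1\}$, and that $G/\overline G_\ell$ is again torsion-free nilpotent of lower class carrying a compatible coordinate system with unchanged $\sigma$. This is precisely the difficulty that the passage to the $\overline G_i$, rather than the ordinary lower central series, is designed to handle cleanly.
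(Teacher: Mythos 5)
Your proof is correct, and it is a genuine variant of the paper's argument rather than a reproduction of it. Both proofs run on the same two engines: Proposition \ref{prop:llgt}, and the ``building block'' that $h\mapsto g^{p(h)}$ lies in $\poly(H,\overline G_\bullet)$ whenever $g\in\overline G_s$ and $p\in P^s(H)$ (in the paper this appears as the step $e_i^{\varphi_i}\in P^j(H,G)$ combined with $e_i\in\overline G_j$). The difference is the shape of the induction. The paper stays inside the fixed group $G$ and inducts on the depth of the image in the filtration: assuming $\varphi(H)\subset\overline G_j$, it reads off the level-$j$ coordinates from the induced map $H\to\overline G_j/\overline G_{j+1}$ (which lies in $P^j$ directly by the filtration condition), divides by $e_{n_{j-1}+1}^{\varphi_{n_{j-1}+1}}\cdots e_{n_j}^{\varphi_{n_j}}$ using Proposition \ref{prop:llgt}, and thereby pushes the image into $\overline G_{j+1}$. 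You instead induct on the nilpotency class, quotienting by the top subgroup $\overline G_\ell$ to handle the first $n_{\ell-1}$ coordinates and then treating the residual map into $\overline G_\ell\cong\Z^{d_\ell}$ directly. What the paper's arrangement buys is that the only quotients it ever uses are the abelian ones $\overline G_j/\overline G_{j+1}$, so no structure has to be transported to a quotient group and the coordinate system never changes. Your arrangement requires exactly the facts you flag at the end: that $\overline G_\ell$ is isolated, that $G/\overline G_\ell$ is torsion-free of class at most $\ell-1$, and, crucially, that $\overline{(G/\overline G_\ell)}_i=\overline G_i/\overline G_\ell$ for $i\le\ell$, so that the images of $e_1,\ldots,e_{n_{\ell-1}}$ really do form a coordinate system for the quotient with unchanged $\sigma$. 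These statements are all true --- the first two are immediate from the definition \eqref{eq:gen.com} --- but the compatibility of the generalised commutator subgroups with the quotient is a short lemma in its own right (if $c^n=gu$ with $g\in G_i$ and $u\in\overline G_\ell$, then normality of $G_i$ gives $c^{nm}\in G_iu^m\subset G_i$ once $u^m\in G_\ell$), and you assert it rather than prove it, so a complete write-up would have to include it. In short, your route is sound and conceptually clean, but it carries a little extra quotient bookkeeping that the paper's in-group induction is specifically arranged to avoid.
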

We first isolate the following observation as a lemma for ease of later reference.
\begin{lemma}\label{lem:ab.coords}
Let $\psi:G\to\C^d$, and write $\psi_1,\ldots,\psi_d$ for the coordinate maps that make up $\psi$. Then $\psi\in P^k(G,\C^d)$ if and only if for each $i$ we have $\psi_i\in P^k(G)$.
\end{lemma}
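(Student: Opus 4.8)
The plan is to exploit the fact that the target group $\C^d$ is abelian, so that all of the group operations defining polynomiality decouple across coordinates. Recall that the space $P^k(G,\C^d)$ is defined using the right derivative for maps into a group, namely $\p^h\psi(x)=\psi(xh)(\psi(x))^{-1}$, which in the additive notation of $\C^d$ is simply $\p^h\psi(x)=\psi(xh)-\psi(x)$. Since addition and inversion in $\C^d$ are performed coordinatewise, the $i$-th coordinate of $\p^h\psi(x)$ is exactly $\p^h\psi_i(x)$, where on the right-hand side $\p^h$ denotes the scalar right derivative of Section 2. By a trivial induction this propagates to iterated derivatives, giving that the $i$-th coordinate of $\p^{h_1}\cdots\p^{h_{k+1}}\psi(x)$ equals $\p^{h_1}\cdots\p^{h_{k+1}}\psi_i(x)$ for every $x,h_1,\ldots,h_{k+1}\in G$.

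First I would record this coordinatewise identity for the iterated right derivative. The membership $\psi\in P^k(G,\C^d)$ means that $\p^{h_1}\cdots\p^{h_{k+1}}\psi(x)=0$ in $\C^d$ for all arguments, and since an element of $\C^d$ is zero precisely when all of its coordinates vanish, this is equivalent to $\p^{h_1}\cdots\p^{h_{k+1}}\psi_i(x)=0$ for every $i$ and all arguments simultaneously. In particular the $\C^d$-valued condition holds if and only if each scalar condition $\p^{h_1}\cdots\p^{h_{k+1}}\psi_i=0$ holds separately.

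It then remains only to translate this right-derivative condition on each $\psi_i$ into the statement $\psi_i\in P^k(G)$. This is exactly the content of Proposition \ref{prop:def.equiv}, whose equivalence of statements (1) and (2) shows that $\psi_i\in P^k(G)$ if and only if $\p^{x_1}\cdots\p^{x_{k+1}}\psi_i=0$ for all $x_1,\ldots,x_{k+1}\in G$; combined with the identification $P^k(G)=P^k(G,\C)$ noted in the text, this closes the argument in both directions. The only point requiring any care is this matching of conventions: the definition of $P^k(H,G)$ is phrased with right derivatives while Definition \ref{def:poly} of $P^k(G)$ uses left derivatives, and it is Proposition \ref{prop:def.equiv} that reconciles the two. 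Beyond this bookkeeping there is no genuine obstacle, as the lemma is a direct consequence of the coordinatewise nature of the abelian group structure on $\C^d$.
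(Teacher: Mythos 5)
Your proof is correct, and it is essentially the argument the paper has in mind: the paper states this lemma without proof, treating it as an immediate observation, and your writeup is precisely the routine verification --- coordinatewise decoupling of the right derivative over the abelian target $\C^d$, followed by Proposition \ref{prop:def.equiv} to reconcile the right-derivative definition of $P^k(G,\C^d)$ with the left-derivative Definition \ref{def:poly} of $P^k(G)$. Your explicit attention to the left/right convention mismatch is exactly the one point of bookkeeping that needs care, so nothing is missing.
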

\begin{proof}[Proof of Proposition \ref{prop:leib.3.14}]We essentially reproduce the proof of \cite[Proposition 3.14]{leibman} with $\overline G_i$ in place of $G_i$.
We prove by induction on $k=0,1,\ldots,\ell$ that if $\varphi\in\poly(H,\overline G_\bullet)$ satisfies  $\varphi(H)\subset\overline G_{\ell +1 -k}$ then  $\varphi_i\in P^{\sigma(i)}(H)$ for each $i$.
Note that this statement is trivial when $k=0$.

Suppose that $\varphi(H)\subset\overline G_{\ell +1 -k}$, and write $j=\ell +1 -k$.
Consider the map $\psi:H\to\overline G_j/\overline G_{j+1}$ induced by $\varphi$ via $\psi(h)=\varphi(h)\overline G_{j+1}$. Since $\varphi\in\poly(H,\overline G_\bullet)$, we have $\psi \in P^j(H,\overline G_j/\overline G_{j+1})$,
and so Lemma \ref{lem:ab.coords} implies that
\begin{equation}\label{eq:leib.1}
\varphi_{n_{j-1}+1},\ldots,\varphi_{n_j}\in P^j(H).
\end{equation}
Abusing notation slightly, we define the map $e_i^{\varphi_i}:H\to G$ by $h\mapsto e_i^{\varphi_i(h)}$. Since the image of $e_i^{\varphi_i}$ in $G$ generates an abelian subgroup (the cyclic subgroup generated by $e_i$), Lemma \ref{lem:ab.coords} and \eqref{eq:leib.1} imply that for each $i=n_{j-1}+1,\ldots,n_j$ we have $e_i^{\varphi_i}\in P^j(H,G)$. Since $e_i\in\overline G_j$, this implies in particular that
\[
e_{n_{j-1}+1}^{\varphi_{n_{j-1}+1}},\ldots,e_{n_j}^{\varphi_{n_j}}\in\poly(H,\overline G_\bullet).
\]
Since $\poly(H,\overline G_\bullet)$ is a group under pointwise multiplication by Proposition \ref{prop:llgt}, it follows that the map $\varphi'$ defined by
\[
\varphi'=\left(e_{n_{j-1}+1}^{\varphi_{n_{j-1}+1}} \cdot \ldots \cdot e_{n_j}^{\varphi_{n_j}} \right)^{-1}\varphi =e_{n_j+1}^{\varphi_{n_j+1}}\cdots e_{n_\ell}^{\varphi_{n_\ell}}
\]
belongs to $\poly(H,\overline G_\bullet)$. However, $\varphi'(H)\subset\overline G_{j+1}= \overline G_{\ell +1 -  (k-1)}$, and so the desired result follows from the induction hypothesis when $i>n_j$, and from \eqref{eq:leib.1} otherwise.
\end{proof}

\begin{proof}[Proof of Proposition \ref{prop:coord.mon.poly}]
The coordinate map factors through $G/\overline G_{\sigma(i)}$, so by Lemma \ref{lem:normal.factor} we may assume that $G$ is nilpotent and has no torsion.
In that case, if $\varphi:G\to G$ is the identity map then $\varphi_i(x)=x_i$, and so the result follows from Lemma \ref{lem:identity.poly} and Proposition \ref{prop:leib.3.14}.
\end{proof}

\begin{lemma}\label{lem:coords.mult.add}
Let $G$ be a finitely generated group with coordinate system $e_1,e_2,\ldots$, and let $x,u\in G$. Then the $j$th coordinate of $xu$ depends only on the first $j$ coordinates of $x$ and of $u$.
Moreover, if $u_j$ is the first non-zero coordinate of $u$ then the first $j$ coordinates of $xu$, and of $ux$, are $x_1,\ldots,x_{j-1},x_j+u_j$.
\end{lemma}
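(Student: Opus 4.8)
The plan is to fix the index $j$, set $m=\sigma(j)$ so that $n_{m-1}<j\le n_m$, and carry out the entire computation in the quotient $G/\overline{G}_{m+1}$. This is legitimate because, by the coordinate definition \eqref{eq:coords.def}, the coset $x\overline{G}_{m+1}$ already determines the coordinates $x_1,\ldots,x_{n_m}$, and in particular the first $j$ coordinates, of any element. The crucial structural input is that the top layer $\overline{G}_m/\overline{G}_{m+1}$ is \emph{central} in $G/\overline{G}_{m+1}$: indeed Lemma \ref{lem:gen.coms} gives $[\overline{G}_1,\overline{G}_m]\subset\overline{G}_{m+1}$, and $\overline{G}_1=G$, so every element of $\overline{G}_m$ commutes modulo $\overline{G}_{m+1}$ with every element of $G$. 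Moreover this layer is free abelian with the images of $e_{n_{m-1}+1},\ldots,e_{n_m}$ as a basis.

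For the first assertion I would write, in $G/\overline{G}_{m+1}$, each element as a product of a \emph{lower part} and a central \emph{top part}: $\bar x=\bar a\bar z$ with $\bar a=e_1^{x_1}\cdots e_{n_{m-1}}^{x_{n_{m-1}}}$ and $\bar z=e_{n_{m-1}+1}^{x_{n_{m-1}+1}}\cdots e_{n_m}^{x_{n_m}}$, and similarly $\bar u=\bar b\bar w$. Using centrality of $\bar z$ one has $\bar x\bar u=\bar a\bar b\,\bar z\bar w$, and since the top layer is free abelian, $\bar z\bar w=\prod_{n_{m-1}<i\le n_m}e_i^{x_i+u_i}$. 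Putting $\bar a\bar b$ into normal form produces lower coordinates $c_i$ together with a top-layer ``carry'' $\zeta=\prod_{n_{m-1}<i\le n_m}e_i^{d_i}$, where both the $c_i$ and the $d_i$ depend only on the coordinates of $\bar a$ and $\bar b$, that is, only on $x_1,\ldots,x_{n_{m-1}}$ and $u_1,\ldots,u_{n_{m-1}}$. Combining the two central top-layer factors gives
\[
\bar x\bar u=e_1^{c_1}\cdots e_{n_{m-1}}^{c_{n_{m-1}}}\prod_{n_{m-1}<i\le n_m}e_i^{d_i+x_i+u_i},
\]
so that $(xu)_j=d_j+x_j+u_j$. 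As $n_{m-1}<j$, the carry $d_j$ depends only on coordinates of index at most $n_{m-1}<j$, and adding $x_j,u_j$ shows $(xu)_j$ depends only on the first $j$ coordinates of $x$ and of $u$, as claimed.

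For the second assertion, suppose $u_1=\cdots=u_{j-1}=0$. Since $n_{m-1}\le j-1$, the lower part $\bar b$ is trivial in $G/\overline{G}_{m+1}$, so the carry vanishes and $\bar a\bar b=\bar a$; thus $(xu)_i=x_i$ for $i\le n_{m-1}$ while $(xu)_i=x_i+u_i$ in the top layer, and for $n_{m-1}<i<j$ we have $u_i=0$. This yields first $j$ coordinates $x_1,\ldots,x_{j-1},x_j+u_j$. The claim for $ux$ follows from the same computation: with $\bar b=1$ we get $\bar u\bar x=\bar w\bar a\bar z$, and centrality of $\bar w$ lets us move it to the right, giving $\bar u\bar x=\bar a\bar z\bar w=\bar x\bar w=\bar x\bar u$, so the first $j$ coordinates agree. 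I expect the only delicate point to be bookkeeping the ``carry'' correctly and confirming it involves no top-layer coordinates of $x$ or $u$; the centrality of $\overline{G}_m/\overline{G}_{m+1}$ is precisely what makes this transparent.
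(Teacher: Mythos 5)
Your proof is correct. It rests on the same key input as the paper's proof---Lemma \ref{lem:gen.coms}---but it is organised genuinely differently: the paper's (very terse) argument proceeds by repeated use of the collection identity $ab=ba[a,b]$ to push the factors of $xu$ into normal form, using Lemma \ref{lem:gen.coms} to check that every commutator so produced falls into deeper terms of the filtration, whereas you fix $j$, pass to the quotient $G/\overline G_{m+1}$ with $m=\sigma(j)$, and use only the special case $[G,\overline G_m]\subset\overline G_{m+1}$ of Lemma \ref{lem:gen.coms}, which makes the top layer $\overline G_m/\overline G_{m+1}$ central. This lets you absorb all of the collection bookkeeping into a single observation: by the uniqueness clause of Definition \ref{def:coord.syst}, the normal-form coordinates of $\bar a\bar b$ are well-defined functions of $\bar a$ and $\bar b$, hence of $x_1,\ldots,x_{n_{m-1}},u_1,\ldots,u_{n_{m-1}}$ alone, so the ``carry'' $d_j$ automatically involves no coordinate of index $\ge j$. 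What your route buys is a proof with no induction or iterative collection, and in fact a slightly stronger conclusion, namely that $(xu)_j$ depends only on $x_1,\ldots,x_{n_{m-1}},x_j$ and $u_1,\ldots,u_{n_{m-1}},u_j$, not on the intermediate top-layer coordinates; what the paper's sketch buys is brevity, at the cost of leaving the commutator bookkeeping to the reader. Your treatment of the ``moreover'' clause (the lower part $\bar b$ vanishes, so the carry vanishes, and centrality of $\bar w$ gives $\bar u\bar x=\bar x\bar u$ in the quotient) is also complete and correct.
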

\begin{proof}
This follows from Lemma \ref{lem:gen.coms} and repeated use of the commutator identity $ab=ba[a,b]$
\end{proof}

\begin{proof}[Proof of Proposition \ref{thm:polys.equiv}]
Lemma \ref{lem:deg.prod} and Proposition \ref{prop:coord.mon.poly} imply that a coordinate polynomial of degree at most $k$ belongs to $P^k(G)$. We may therefore assume that $f\in P^k(G)$ and prove that $f$ is a coordinate polynomial of degree at most $k$.

Proposition \ref{prop:comm.deriv} implies that $f$ factors through $G/\overline G_{k+1}$, and so for every $x\in G$ the value of $f(x)$ depends only on the coordinates $x_1,\ldots,x_{n_k}$. If $f(x)$ does not depend on any of these coordinates then $f$ is constant, and hence a coordinate polynomial of degree $0$. We may therefore assume that $f(x)$ depends only on $x_1,\ldots,x_j$ and proceed by induction on $j$.

For each fixed $x_1,\ldots,x_{j-1}$ we may view $f$ as a function of $x_j$; thus there are functions $\alpha_{x_1,\ldots,x_{j-1}}:\Z\to\C$ such that
\[
f(x)=\alpha_{x_1,\ldots,x_{j-1}}(x_j).
\]
Lemma \ref{lem:coords.mult.add} implies that, for each $m$,
the first $j$ coordinates of $xe_j^m$ are $x_1,\ldots,x_{j-1},x_j+m$.  This in turn implies that
\begin{equation}\label{eq:partial}
(\partial_{e_j} )^mf(x)=(\partial_1)^m\alpha_{x_1,\ldots,x_{j-1}}(x_j).
\end{equation}
Proposition \ref{prop:comm.deriv} implies that $\partial_{e_j}$ reduces the degree of a polynomial by $\sigma(j)$, and so
\[
(\partial_{e_j} )^{\lfloor k/\sigma(j)\rfloor+1}f = 0
\]
It follows from \cite[Lemma 2.2]{leibman} that a function $p:\Z\to \C$ with $(\p_1)^m p = 0$ is a polynomial
of degree at most $m-1$.
In conjunction with (\ref{eq:partial}), 
all this implies that $\alpha_{x_1,\ldots,x_{j-1}}$ is a polynomial on $\Z$ of degree at most $\lfloor k/\sigma(j) \rfloor$.

It is an elementary fact that functions in $P^k(\Z)$ are ordinary polynomials of degree $k$ (see, for example, \cite[1.8]{leibman}). There are therefore a natural number $n\le k/\sigma(j)$ and real numbers $\alpha^{(i)}_{x_1,\ldots,x_{j-1}}$ for $0\le i\le n$ such that
\[
\alpha_{x_1,\ldots,x_{j-1}}(\xi)=\sum_{i=0}^n\alpha^{(i)}_{x_1,\ldots,x_{j-1}} \cdot \xi^i.
\]
Another application of (\ref{eq:partial}) therefore implies that there is a non-zero constant $c_n$ such that $(\partial_{e_j})^n f(x)=c_n\alpha^{(n)}_{x_1,\ldots,x_{j-1}}$. By Proposition \ref{prop:comm.deriv}, every application of $\partial_{e_j}$ reduces the degree of a polynomial by $\sigma(j)$.
It follows that  the function  $x \mapsto \alpha^{(n)}_{x_1,\ldots,x_{j-1}}$ belongs to $P^{k-\sigma(j)n}(G)$, and so is a coordinate polynomial of degree at most $k-\sigma(j)n$ by induction.

Applying the same argument to the function $x \mapsto f(x)-\alpha^{(n)}_{x_1,\ldots,x_{j-1}}\cdot x_j^n$, which also belongs to $P^k(G)$ by Lemma \ref{lem:deg.prod} and Proposition \ref{prop:coord.mon.poly}, we see that $\alpha^{(n-1)}_{x_1,\ldots,x_{j-1}}$ is a coordinate polynomial of degree at most $k-\sigma(j)(n-1)$. Continuing in this manner, each $\alpha^{(i)}_{x_1,\ldots,x_{j-1}} $ is a coordinate polynomial of degree at most $k-\sigma(j)i$.
We conclude that for any $0 \leq i \leq n$,
the function $x \mapsto \alpha^{(i)}_{x_1,\ldots,x_{j-1} } \cdot x_j^i$ is a coordinate polynomial of degree at most
$k-\sigma(j) i + \sigma(j) i = k$.
This proves that $f$ is a coordinate polynomial of degree at most $k$, as desired.
\end{proof}

\subsection{Dimensions of spaces of polynomials}
In this subsection we prove Proposition \ref{prop:dim.of.poly}, and use it to prove the following result.
\begin{prop}\label{prop:restriction}
Let $N$ be a finitely generated nilpotent group and let $H$ be a finite-index subgroup of $N$. Then the restriction map $\rho$ from $P^k(N)$ to functions on $H$ is a linear bijection between $P^k(N)$ and $P^k(H)$.
\end{prop}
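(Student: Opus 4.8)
The plan is to prove that $\rho$ is well-defined with image in $P^k(H)$, that it is injective, and then to deduce surjectivity from an equality of dimensions.

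First I would check that $\rho(P^k(N))\subseteq P^k(H)$. If $f\in P^k(N)$ and $u_1,\ldots,u_{k+1}\in H$, then for $x\in H$ all the group elements appearing in the expansion of $\partial_{u_1}\cdots\partial_{u_{k+1}}f(x)$ lie in $H$, so this quantity equals $\partial_{u_1}\cdots\partial_{u_{k+1}}(f|_H)(x)$; since the former vanishes, $f|_H\in P^k(H)$. Linearity of $\rho$ is immediate.

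The heart of the argument is injectivity. Suppose $f\in P^k(N)$ is non-zero with $f|_H=0$, and let $n=\deg f$. The case $n=0$ is impossible, since a non-zero constant does not vanish on $H$. For $n\ge 1$, consider the map $\varphi_f\colon N^n\to\C$ given by $\varphi_f(x_1,\ldots,x_n)=\partial_{x_1}\cdots\partial_{x_n}f(1)$, which by Lemma \ref{lem:deriv.hom} is a homomorphism into $(\C,+)$ in each of its variables. Expanding the iterated derivative expresses $\varphi_f(x_1,\ldots,x_n)$ as a signed sum of values of $f$ at products of the $x_i$; when every $x_i\in H$ these products lie in $H$, so $f|_H=0$ gives $\varphi_f|_{H^n}=0$. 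Now for every $g\in N$ some power $g^m$ lies in $H$ (one may take $m=[N:H]!$), and a homomorphism $N\to(\C,+)$ that vanishes on $H$ therefore vanishes on all of $N$, because $(\C,+)$ is torsion-free. Applying this one variable at a time---at each step the relevant single-variable map is a homomorphism that already vanishes on $H$---propagates $\varphi_f\equiv0$ from $H^n$ to all of $N^n$. This contradicts the fact that $\deg f=n$ forces $\varphi_f$ to take a non-zero value, so $f=0$ and $\rho$ is injective.

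It remains to establish surjectivity. Since $\rho$ is injective with image contained in $P^k(H)$, it suffices to show that $\dim P^k(N)=\dim P^k(H)$, as then the image is a subspace of $P^k(H)$ of full dimension. By Proposition \ref{prop:dim.of.poly} both dimensions are given by the same combinatorial count of integer points, determined only by the sequence of torsion-free ranks $d_j$ of the quotients $G_j/G_{j+1}$. I would therefore reduce to the purely group-theoretic claim that $\operatorname{rank}(H_j/H_{j+1})=\operatorname{rank}(N_j/N_{j+1})$ for all $j$. This follows from the standard fact that, for $H$ of finite index in a finitely generated nilpotent group $N$, each $H_j=\gamma_j(H)$ has finite index in $N_j=\gamma_j(N)$: indeed $H_jN_{j+1}/N_{j+1}$ is then of finite index in $N_j/N_{j+1}$ while $(H_j\cap N_{j+1})/H_{j+1}$ is finite, so that $H_j/H_{j+1}$ has the same rank as $N_j/N_{j+1}$. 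This rank equality is the one genuinely group-theoretic input and the main obstacle; the injectivity step, by contrast, is routine once Lemma \ref{lem:deriv.hom} is in hand.
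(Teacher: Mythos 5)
Your proof is correct, but your treatment of injectivity takes a genuinely different route from the paper's. The paper disposes of injectivity in one line by citing Leibman's result that a non-zero polynomial on an amenable group vanishes only on a set of zero density: nilpotent groups are amenable, and a finite-index subgroup $H$ has positive density in $N$, so a polynomial vanishing on $H$ is zero. Your argument instead works with the top-degree multilinear form $\varphi_f$ of Lemma \ref{lem:deriv.hom}: vanishing of $f$ on $H$ kills $\varphi_f$ on $H^n$, and since every $g\in N$ has a positive power in $H$ while a homomorphism into the torsion-free group $(\C,+)$ that vanishes on $H$ must vanish identically, propagating variable by variable gives $\varphi_f\equiv0$ on $N^n$, contradicting $\deg f=n$. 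The one step you assert without proof---that $\deg f=n$ forces $\varphi_f$ to take a non-zero value---is fine but deserves a line: since $f\in P^n(N)$, each function $\partial_{x_1}\cdots\partial_{x_n}f$ is constant, so if it is non-zero anywhere it is non-zero at $1$; the paper relies on exactly this fact in the proof of Proposition \ref{prop:growth.of.polys}. Your route is more elementary and self-contained, avoiding amenability and the density theorem altogether, and it incidentally shows that injectivity of the restriction map holds in any finitely generated group (nilpotency plays no role in that half); the paper's citation buys brevity. For surjectivity you and the paper do the same thing: reduce, via Proposition \ref{prop:dim.of.poly}, to the equality of the torsion-free ranks of $H_j/H_{j+1}$ and $N_j/N_{j+1}$, which rests on $H_j$ having finite index in $N_j$---the ``standard fact'' you invoke is precisely the paper's Lemma \ref{lem:gen.com.fin.ind}, proved there via multilinearity of iterated commutators, and your deduction of the rank equality from it matches the paper's.
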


\begin{proof}[Proof of Proposition \ref{prop:dim.of.poly}]
Lemma \ref{lem:tors.finite} implies that each $G_j$ has finite index in $\overline G_j$, and so $\overline G_j/\overline G_{j+1}$ has the same torsion-free rank as $G_j/G_{j+1}$ for each $j$. The non-negative solutions to \eqref{eq:integers_sols_dim} are therefore in bijection with the coordinate monomials of degree at most $k$ on $G$. These coordinate monomials span $P^k(G)$ by Proposition \ref{thm:polys.equiv}, and from the uniqueness of the representation of $x \in G$ in the form \eqref{eq:coords.def} they are linearly independent.
\end{proof}

Recall that  the \emph{iterated commutator} $[x_1,x_2,\cdots,x_j]_j$ is defined inductively by
\[
[x_1,x_2]_2=[x_1,x_2]=x_1^{-1}x_2^{-1}x_1x_2;\qquad[x_1,\ldots,x_j]_j=[[x_1,\ldots,x_{j-1}]_{j-1},x_j]\quad(j\ge3)
\]
%for group elements $x_1,x_2,\ldots$.
\begin{lemma}\label{lem:comm.multilin}
Let $G$ be a group. Then the map
\[
G\times\cdots\times G \to G_j/G_{j+1}
\]
induced by the iterated commutator $[\,\,\, ,\,\cdots\, ,\,\,]_j$ is a homomorphism in each variable, the image of which is trivial when any variable belongs to $[G,G]$.
\end{lemma}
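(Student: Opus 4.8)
The plan is to prove the two assertions of Lemma \ref{lem:comm.multilin} separately, treating the multilinearity (homomorphism-in-each-variable) claim first and then deducing the vanishing statement as a corollary. By the antisymmetry inherent in the commutator structure, it suffices to verify that the induced map is a homomorphism in the \emph{last} variable $x_j$; the remaining variables can then be handled by a symmetric argument or by reducing to the $j=2$ base case. So first I would fix $x_1,\ldots,x_{j-1}$ and write $w=[x_1,\ldots,x_{j-1}]_{j-1}\in G_{j-1}$, so that the claim becomes: the map $G\to G_j/G_{j+1}$ given by $x_j\mapsto[w,x_j]\,G_{j+1}$ is a homomorphism.

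The key step is the standard commutator identity
\[
[w,x_jy_j]=[w,y_j]\,[w,x_j]^{y_j},
\]
where $a^{b}=b^{-1}ab$. Since $w\in G_{j-1}$, we have $[w,x_j]\in G_j$ by definition of the lower central series, and therefore $[w,x_j]^{y_j}=[w,x_j]\,[[w,x_j],y_j]$, where the correction term $[[w,x_j],y_j]$ lies in $[G_j,G]=G_{j+1}$. Passing to the quotient $G_j/G_{j+1}$ kills this correction term, so modulo $G_{j+1}$ we obtain $[w,x_jy_j]\equiv[w,y_j][w,x_j]\equiv[w,x_j][w,y_j]$, using also that $G_j/G_{j+1}$ is abelian. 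This establishes additivity (i.e.\ the homomorphism property) in the variable $x_j$. For the earlier variables, I would run the analogous computation expanding $[w,x_j]$ with $w=[\,\ldots,uv,\ldots]_{j-1}$ split in one of its slots, invoking the inductive hypothesis that multilinearity holds for the $(j-1)$-fold iterated commutator modulo $G_j$; the extra conjugation and correction terms again land in $G_{j+1}$ and vanish in the quotient.

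The vanishing claim follows quickly once multilinearity is in hand. Suppose some variable, say $x_m$, belongs to $[G,G]=G_2$. I would argue that the image of the iterated commutator then lies in a strictly higher term of the lower central series: inserting an element of $G_2$ into the $m$th slot of a weight-$j$ iterated commutator produces an element of $G_{j+1}$ by the filtration property $[G_a,G_b]\subseteq G_{a+b}$ (each bracket contributes its proper weight, with the $G_2$ entry raising the total weight by one). Hence the commutator is trivial in $G_j/G_{j+1}$, as required. I would verify the filtration property inductively or simply cite $[G_a,G_b]\subseteq G_{a+b}$ as the defining property of the lower central series filtration, which is already used elsewhere in the paper.

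The main obstacle is purely bookkeeping: tracking that every correction term arising from a conjugation or from the commutator expansion genuinely lands in $G_{j+1}$ rather than merely in $G_j$. This is where the nilpotent filtration estimates $[G_a,G_b]\subseteq G_{a+b}$ must be applied carefully and repeatedly, and where care is needed to keep the induction on the weight $j$ consistent across all the variable slots. None of the individual computations is deep, but organizing them so that the $G_{j+1}$-membership is transparent in each case is the crux.
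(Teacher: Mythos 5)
Your proposal is correct and is essentially the paper's argument made self-contained: the paper cites Hall's commutator identities for the fact that the commutator map $A\times B\to[A,B]$ is a homomorphism in each variable whenever $[A,B]$ is central, applies this inside $G/G_{j+1}$, and inducts on $j$ --- which is exactly what your identity $[w,x_jy_j]=[w,y_j][w,x_j]^{y_j}$ plus the observation that all conjugation corrections lie in $[G_j,G]\subseteq G_{j+1}$ accomplishes, and your vanishing step via the filtration property $[G_a,G_b]\subseteq G_{a+b}$ is an equally valid substitute for the paper's deduction that a homomorphism into the abelian group $G_j/G_{j+1}$ kills $[G,G]$. The one misstatement --- that ``antisymmetry'' reduces multilinearity to the last variable, which is false since the nested commutator $[[x_1,\ldots,x_{j-1}]_{j-1},x_j]$ has no such symmetry --- is harmless, because you immediately supersede it with the correct treatment of the inner variables, namely the induction on $j$ applied modulo $G_j$ with correction terms absorbed into $G_{j+1}$.
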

\begin{proof}
This is well known, essentially appearing in \cite[\S6]{bass}, for example. It follows from \cite[(10.2.1.2) \& (10.2.1.3)]{hall} that if $A$ and $B$ are subgroups of $G$ whose commutator $[A,B]$ lies in the centre of $G$ then the commutator map $A\times B\to[A,B]$ is a homomorphism in each variable. Applying this to $G_{j-1}/G_{j+1}$ and $G/G_{j+1}$, we see that the commutator map $G_{j-1}\times G\to G_{j}$ induces a map
\[
G_{j-1}\times G\to G_j/G_{j+1}
\]
that is a homomorphism in each variable and trivial on $G_j\times G$ and $G_{j-1}\times G_2$. The lemma therefore follows by induction on $j$.
\end{proof}

\begin{lemma}\label{lem:ab.fin.ind}
Let $N$ be a finitely generated abelian group with finite-index subgroup $H$. Let $Z$ be another abelian group, and suppose that $\alpha:N^j\to Z$ is a homomorphism in each variable. Define subgroups $\alpha(N),\alpha(H)$ of $Z$ via
\[
\alpha(N)=\langle\alpha(n_1,\ldots,n_j):n_i\in N\rangle,
\]
\[
\alpha(H)=\langle\alpha(h_1,\ldots,h_j):h_i\in H\rangle.
\]
Then $\alpha(H)$ has finite index in $\alpha(N)$.
\end{lemma}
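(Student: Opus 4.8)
The plan is to exploit the multilinearity of $\alpha$ to reduce the claim to a statement about a single variable at a time, then use the finite-index hypothesis coordinate by coordinate. First I would observe that, since $N$ is finitely generated abelian, we may choose a finite generating set and, because $H$ has finite index in $N$, there is an integer $m$ such that $n^m\in H$ for every $n\in N$ (indeed $N/H$ is a finite abelian group of exponent dividing $m=|N/H|$). The key consequence of multilinearity is that for any $n_1,\ldots,n_j\in N$ we have
\[
\alpha(n_1^m,\ldots,n_j^m)=\alpha(n_1,\ldots,n_j)^{m^j},
\]
since $\alpha$ is a homomorphism in each slot. Because each $n_i^m$ lies in $H$, the left-hand side lies in $\alpha(H)$, and hence $\alpha(n_1,\ldots,n_j)^{m^j}\in\alpha(H)$ for every choice of the $n_i$.

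Next I would note that $\alpha(N)$ is generated by the elements $\alpha(n_1,\ldots,n_j)$, and that we have just shown the $m^j$-th power of each such generator lies in $\alpha(H)$. Since $N$ is finitely generated and $\alpha$ is multilinear, $\alpha(N)$ is in fact generated by the finitely many values $\alpha(g_{i_1},\ldots,g_{i_j})$ as the $g_{i_t}$ range over a fixed finite generating set of $N$; thus $\alpha(N)$ is a finitely generated abelian group. Writing $A=\alpha(N)$ and $B=\alpha(H)\le A$, we have shown that $a^{m^j}\in B$ for every generator $a$ of $A$, and hence (again using that $A$ is abelian) that $A^{m^j}\le B$. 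An abelian group $A$ that is finitely generated satisfies $[A:A^r]<\infty$ for every positive integer $r$, so $[A:B]\le[A:A^{m^j}]<\infty$, which is exactly the assertion that $\alpha(H)$ has finite index in $\alpha(N)$.

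The only mild subtlety, and the step I would be most careful about, is the passage from ``$a^{m^j}\in B$ for each generator $a$'' to ``$A^{m^j}\le B$''. This requires that $A$ be abelian, which it is since $A\le Z$ and $Z$ is abelian: in an abelian group the set of $r$-th powers forms a subgroup, and it is generated by the $r$-th powers of any generating set, so $A^{m^j}=\langle a^{m^j}:a\text{ a generator of }A\rangle\le B$. Everything else is routine: the finiteness of $N/H$ gives the uniform exponent $m$, multilinearity gives the power identity, finite generation of $N$ gives finite generation of $A$, and the elementary fact that a finitely generated abelian group has finite index over its subgroup of $r$-th powers closes the argument.
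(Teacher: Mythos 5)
Your proof is correct and follows essentially the same route as the paper: fix a finite generating set of $N$, use the exponent of the finite group $N/H$ together with multilinearity to show that a fixed power ($m^j$-th, resp.\ $r^j$-th) of each generator $\alpha(e_{i_1},\ldots,e_{i_j})$ of $\alpha(N)$ lies in $\alpha(H)$, and conclude finiteness of the index. The only (immaterial) difference is the final bookkeeping: the paper observes that $\alpha(N)/\alpha(H)$ is an abelian group generated by finitely many torsion elements and hence finite, whereas you pass through $\alpha(N)^{m^j}\le\alpha(H)$ and the finiteness of $[\alpha(N):\alpha(N)^{m^j}]$.
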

\begin{proof}
Fix a generating set $e_1,\ldots,e_m$ for $N$, and write $r$ for the exponent of the finite abelian group $N/H$; thus $e_i^r\in H$ for every $i$. The group $\alpha(N)$ is generated by the finite set of elements of the form $\alpha(e_{i_1},\ldots,e_{i_j})$. By multilinearity of $\alpha$ each such element satisfies $\alpha(e_{i_1},\ldots,e_{i_j})^{r^j}\in\alpha(H)$, and so the quotient group $\alpha(N)/\alpha(H)$ is an abelian group generated by finitely many elements of finite order, and hence finite, and the lemma is proved.
\end{proof}
\begin{lemma}\label{lem:gen.com.fin.ind}
Let $N$ be a finitely generated $\ell$-step nilpotent group and let $H$ be a subgroup of finite index in $N$. Then for every $j\ge1$ the subgroup $H_j$ has finite index in $N_j$.
\end{lemma}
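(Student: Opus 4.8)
The plan is to prove the statement by downward induction on $j$, exploiting the fact that $N$ is $\ell$-step nilpotent so that the lower central series terminates. Note first that $H_j\le N_j$ for every $j$, since $H\le N$ gives $H_j\le N_j$ by an immediate induction. For $j>\ell$ we have $N_j=\{1\}$, so the claim is trivial, and for $j=\ell+1$ we have $N_{\ell+1}=H_{\ell+1}=\{1\}$; this serves as the base of the induction. For the inductive step I would suppose $[N_{j+1}:H_{j+1}]<\infty$ and deduce $[N_j:H_j]<\infty$. Since $N_{j+1}\lhd N_j$, the set $H_jN_{j+1}$ is a subgroup and one may split
\[
[N_j:H_j]=[N_j:H_jN_{j+1}]\cdot[H_jN_{j+1}:H_j].
\]
The first factor equals the index of the image of $H_j$ in $N_j/N_{j+1}$, and the second equals $[N_{j+1}:N_{j+1}\cap H_j]$, which is at most $[N_{j+1}:H_{j+1}]$ because $H_{j+1}\le N_{j+1}\cap H_j$; this last quantity is finite by the inductive hypothesis. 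Thus it remains only to show that the image of $H_j$ in $N_j/N_{j+1}$ has finite index.

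To prove this I would invoke the multilinearity of iterated commutators. Write $A=N/[N,N]$ for the abelianisation, which is finitely generated abelian, and let $\pi:N\to A$ be the quotient map; then $\pi(H)$ has finite index in $A$. By Lemma \ref{lem:comm.multilin} the iterated commutator $[\,\cdot\,,\ldots,\,\cdot\,]_j$ induces a map $\alpha:A^j\to N_j/N_{j+1}$ that is a homomorphism in each variable (it is trivial whenever an argument lies in $[N,N]$, so it descends to $A$). In the notation of Lemma \ref{lem:ab.fin.ind}, $\alpha(A)$ is the subgroup of $N_j/N_{j+1}$ generated by all images of weight-$j$ iterated commutators, which is all of $N_j/N_{j+1}$ by the standard fact that $N_j/N_{j+1}$ is generated by such commutators; and $\alpha(\pi(H))$ is contained in the image of $H_j$, since $\alpha(\pi(h_1),\ldots,\pi(h_j))$ is the image of $[h_1,\ldots,h_j]_j\in H_j$. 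Applying Lemma \ref{lem:ab.fin.ind} with $\bar N=A$, $\bar H=\pi(H)$ and $Z=N_j/N_{j+1}$ then shows that $\alpha(\pi(H))$ has finite index in $\alpha(A)=N_j/N_{j+1}$. The sandwich
\[
\alpha(\pi(H))\;\subseteq\;\text{(image of $H_j$ in $N_j/N_{j+1}$)}\;\subseteq\;N_j/N_{j+1}
\]
then forces the image of $H_j$ to have finite index as well, completing the inductive step.

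The routine points to verify are the product formula for indices and the containment $H_{j+1}\le N_{j+1}\cap H_j$, both of which are immediate. The one input I would take as standard, rather than re-derive, is that $N_j/N_{j+1}$ is generated by the images of the weight-$j$ iterated commutators $[x_1,\ldots,x_j]_j$; I expect this — together with checking, via Lemma \ref{lem:comm.multilin}, that $\alpha$ is well defined on $A^j$ — to be the only genuinely structural step, and it is exactly what makes the abelianised statement of Lemma \ref{lem:ab.fin.ind} applicable. Everything else reduces to elementary bookkeeping with indices.
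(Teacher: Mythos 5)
Your proof is correct and follows essentially the same route as the paper's: downward induction on $j$, the multilinear map $\alpha$ induced by the iterated commutator (Lemma \ref{lem:comm.multilin}), the fact that $N_j/N_{j+1}$ is generated by weight-$j$ commutators (for which the paper cites \cite[Theorem 10.2.3]{hall}), and Lemma \ref{lem:ab.fin.ind}, followed by the same index bookkeeping via $H_jN_{j+1}$. The only cosmetic difference is that you use the containment $\alpha(\pi(H))\subseteq H_jN_{j+1}/N_{j+1}$ where the paper asserts equality; the containment suffices, exactly as you note.
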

\begin{proof}
The conclusion of the lemma is trivial when $j>\ell$, so by induction on $\ell-j$ we may assume that $H_{j+1}$ has finite index in $N_{j+1}$.

Lemma \ref{lem:comm.multilin} implies that the iterated commutator $[\,\,\, ,\,\cdots\, ,\,\,]_j$ induces a multilinear map
\[
\alpha:N/[N,N] \times\cdots\times N/[N,N] \to N_j/N_{j+1}.
\]
It follows from \cite[Theorem 10.2.3]{hall} that, in the notation of Lemma \ref{lem:ab.fin.ind}, we have $N_j/N_{j+1}=\alpha(N/[N,N])$ and $H_j/(H_j\cap N_{j+1})=\alpha(H/(H\cap[N,N]))$, and so Lemma \ref{lem:ab.fin.ind} implies that $H_j/(H_j\cap N_{j+1})$ has finite index in $N_j/N_{j+1}$.

The inductive hypothesis implies in particular that $(H_j\cap N_{j+1})$ has finite index in $N_{j+1}$, and so $H_j$ has finite index in $N_j$, as claimed.
\end{proof}

\begin{proof}[Proof of Proposition \ref{prop:restriction}]
A polynomial of degree at most $k$ on $N$ certainly restricts to a unique polynomial of degree at most $k$ on $H$, and so $\rho$ does indeed map $P^k(N)$ into $P^k(H)$. By \cite[Proposition $4.2$]{leibman}, whenever $G$ is an amenable group and $f:G \to \C$ is a  non-zero polynomial, the preimage of $0$ under $f$ has zero density in $G$.  Recall that nilpotent groups are amenable. Being a finite-index subgroup, $H$ has positive density in $N$, and so it follows that $\rho$ is injective.
Lemma \ref{lem:gen.com.fin.ind} implies that $H_j/H_{j+1}$ has the same torsion-free rank as $N_j/N_{j+1}$, and so Proposition \ref{prop:dim.of.poly} implies that $\dim P^k(N)=\dim P^k(H)$, and so $\rho$ is in fact bijective.
\end{proof}

\begin{remark}
Proposition \ref{prop:restriction} does not necessarily hold if $N$ is not nilpotent. For example, the infinite dihedral group $D_\infty$ contains $\Z$ as a subgroup of index $2$, and the identity on $\Z$ is clearly a non-constant polynomial of degree $1$. However, it is well known that any  homomorphism from $D_\infty$ into (the additive group of) $\R$ is trivial. This easily implies that any polynomial on $D_\infty$ is constant, and in particular that the identity on $\Z$ does not extend to a polynomial on $D_\infty$.
\end{remark}

\subsection{Mal'cev bases and polynomials on Lie groups}\label{subsec:alex}
As we mentioned in the introduction,
Theorem \ref{thm:alex.cosets} is originally due to Alexopoulos \cite{alex}, with slight variations.
One obvious difference is that in the in the work of Alexopoulos polynomials are defined only for a torsion-free nilpotent group, by considering it as a lattice in a Lie group.
The purpose of this subsection is to describe the definition used by Alexopoulos and to show that, where it is defined, it coincides with ours.

An embedding theorem of Mal'cev \cite{rag} states that if a finitely generated nilpotent group $N$ 
has no torsion then it embeds as a discrete,
cocompact subgroup of a simply connected nilpotent Lie group $\mathcal N$ of the same nilpotency class, $\ell$, as $N$. Let $\n$ be the Lie algebra of $\mathcal N$, and write
\[
\n=\n_0=\n_1\supset\n_2\supset\cdots\supset\n_\ell\supset\n_{\ell+1}=\{0\}
\]
for the lower central series of $\n$. Write $d=\dim\n$, and for each $i$ write
\[
m_i=d-\dim\n_i.
\]
A related result of Mal'cev \cite{malcev} then states that there is a basis $X_1,\ldots,X_d$ for $\n$ that satisfies the following properties.
\begin{enumerate}
\renewcommand{\labelenumi}{(\roman{enumi})}
\item For each $j=1,\ldots,\ell$ we have $\n_j=\Span\{X_{m_j+1},\ldots,X_d\}$.
\item For every $x\in\mathcal N$ there is a unique $(x_1,\ldots,x_d)\in\R^{m_\ell}$ such that $x=\exp(x_1X_1)\cdots\exp(x_dX_d)$.
\item The subgroup $N$ of $\mathcal N$ consists precisely of those $x\in\mathcal N$ for which $(x_1,\ldots,x_d)\in\Z^d$.
\end{enumerate}
The basis $X_1,\ldots,X_d$ is called a \emph{Mal'cev basis compatible with $N$}, and the coordinates $(x_1,\ldots,x_d)$ for $x\in\mathcal N$ appearing in (ii) are called \emph{Mal'cev coordinates}, or \emph{coordinates of the second kind}. By identifying each $x\in\mathcal N$ with its $d$-tuple $(x_1,\ldots,x_d)$ of Mal'cev coordinates, we may identify $\mathcal N$ with $\R^d$. Moreover, by property (iii) above we may similarly identify $N$ with $\Z^d$.

Having made these identifications, Alexopoulos defines a \emph{polynomial} on $\mathcal N$ to be a polynomial, in the usual sense, on $\R^d$, and a \emph{polynomial} on $N$ to be the restriction of a polynomial on $\mathcal N$ (or, equivalently by (iii), a polynomial on $\Z^d$). This definition of a polynomial coincides with our Definition \ref{def:coord.poly} by the following result.
\begin{prop}\label{prop:Lie.poly.coord}
Let $N$ be a discrete cocompact subgroup of a simply connected nilpotent Lie group $\mathcal N$ with Lie algebra $\n$. Let $X_1,\ldots,X_d$ be a Mal'cev basis for $\n$ compatible with $N$. Then the elements $e_i=\exp X_i$ form a coordinate system for $N$, and the coordinates of an element with respect to $e_1,\ldots,e_d$ are precisely its Mal'cev coordinates with respect to $X_1,\ldots,X_d$.
\end{prop}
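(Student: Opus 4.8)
The plan is to reduce both assertions to the single identification $\overline{N}_j = N \cap \mathcal{N}_j$ for every $j$, where $\mathcal{N}_j := \exp\n_j$ denotes the $j$-th term of the lower central series of $\mathcal{N}$. Granting this, the remainder is a short bookkeeping exercise with properties (i)--(iii) of the Mal'cev basis.

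\emph{The identification.} For the inclusion $\overline N_j \subseteq N \cap \mathcal N_j$ I would first observe that $\mathcal N_j$ is \emph{isolated} in $\mathcal N$: if $c = \exp Y$ and $c^n = \exp(nY) \in \mathcal N_j$, then, since $\exp$ is a diffeomorphism and $\mathcal N_j = \exp\n_j$, we get $nY \in \n_j$, hence $Y \in \n_j$ and $c \in \mathcal N_j$. Consequently any $c \in N$ with $c^n \in N_j \subseteq \mathcal N_j$ already lies in $\mathcal N_j$, which is exactly the claim $\overline N_j \subseteq N \cap \mathcal N_j$. For the reverse inclusion I would invoke the standard theory of lattices in simply connected nilpotent Lie groups \cite{rag}: both $N_j$ and $N \cap \mathcal N_j$ are cocompact lattices in $\mathcal N_j$, and since they are nested this forces $[\,N\cap\mathcal N_j : N_j\,] < \infty$. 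Every element of $N \cap \mathcal N_j$ therefore has a power in $N_j$, giving $N \cap \mathcal N_j \subseteq \overline N_j$. (The finiteness of this index is the one genuinely external input; Lemma~\ref{lem:tors.finite} supplies the companion fact that $N_j$ has finite index in $\overline N_j$.)

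\emph{The coordinate system.} Since $X_{m_j+1},\ldots,X_d$ is itself a Mal'cev basis of $\mathcal N_j$, property (ii) applied both to $\mathcal N$ and to $\mathcal N_j$, together with the uniqueness it asserts, shows that for $x\in\mathcal N$ one has $x\in\mathcal N_j$ if and only if the Mal'cev coordinates satisfy $x_1=\cdots=x_{m_j}=0$. Combined with the identification this gives $\overline N_j = \{x\in N : x_1=\cdots=x_{m_j}=0\}$. The composite homomorphism $\mathcal N_j \to \mathcal N_j/\mathcal N_{j+1}\cong\n_j/\n_{j+1}$ then restricts on $\overline N_j$ to a surjection onto $\Z^{m_{j+1}-m_j}$ with kernel exactly $\overline N_{j+1}$, namely $x\mapsto(x_{m_j+1},\ldots,x_{m_{j+1}})$. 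Hence $\overline N_j/\overline N_{j+1}$ is free abelian of rank $m_{j+1}-m_j$, and, because $e_i=\exp X_i$ has coordinate vector the $i$-th standard basis vector, the images of $e_{m_j+1},\ldots,e_{m_{j+1}}$ form a basis. Since $m_1=0$, this yields $d_j = m_{j+1}-m_j$ and $n_j=m_{j+1}$, so $e_1,\ldots,e_d$ is a coordinate system in the sense of Definition~\ref{def:coord.syst}.

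\emph{Matching the coordinates.} Finally, since $N$ is $\ell$-step nilpotent we have $\overline N_{\ell+1}=N\cap\mathcal N_{\ell+1}=\{1\}$ and $n_\ell = m_{\ell+1}=d$, so the group coordinates $y_1,\ldots,y_d$ of $x\in N$ are the unique integers with $x=e_1^{y_1}\cdots e_d^{y_d}$. As $e_i^{y_i}=(\exp X_i)^{y_i}=\exp(y_iX_i)$, this reads $x=\exp(y_1X_1)\cdots\exp(y_dX_d)$, and the uniqueness clause of property (ii) forces $y_i=x_i$ for every $i$. The main obstacle is the identification step, and specifically the finite-index input from lattice theory; once that is in place the coordinate bookkeeping is routine.
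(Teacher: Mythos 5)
Your proposal is correct and takes essentially the same approach as the paper: your identification $\overline N_j = N\cap\mathcal N_j$ is precisely the paper's Lemma \ref{lem:ej.in.N_j}, proved there by the same two arguments (isolation of $\mathcal N_j$ via the $\log$/$\exp$ diffeomorphism for one inclusion, and finite index of $N_j$ in $N\cap\mathcal N_j$ from cocompactness via \cite{rag} for the other). The remaining coordinate bookkeeping, which you spell out in detail, is exactly the part the paper dismisses as straightforward.
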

The proof of Proposition \ref{prop:Lie.poly.coord} is quite simple. However, we do need the following fact.
\begin{lemma}\label{lem:ej.in.N_j}
The generalised commutator subgroups $\overline N_j$ satisfy $N\cap\mathcal N_j=\overline N_j$.
\end{lemma}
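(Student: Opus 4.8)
The plan is to prove the two inclusions $\overline N_j\subseteq N\cap\mathcal N_j$ and $N\cap\mathcal N_j\subseteq\overline N_j$ separately. Throughout I will lean on two structural features. First, by its very definition $\overline N_j$ is \emph{root-closed} (isolated) in $N$: if $c\in N$ and $c^m\in\overline N_j$ for some $m\ge1$, then $(c^m)^l\in N_j$ for some $l$, whence $c^{ml}\in N_j$ and $c\in\overline N_j$. Second, I will use that $\mathcal N$ is a simply connected nilpotent Lie group, so that $\exp$ is a diffeomorphism and each $\mathcal N_j$ equals $\exp\n_j$; in particular each quotient $\mathcal N/\mathcal N_j$ is again a simply connected nilpotent Lie group, hence torsion-free (if $\exp(X)^n=\exp(nX)=1$ then $X=0$).

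For the inclusion $\overline N_j\subseteq N\cap\mathcal N_j$, I first observe that $N_j\subseteq\mathcal N_j$: the subgroup $N_j$ is generated by $j$-fold iterated commutators of elements of $N\subseteq\mathcal N$, and these lie in the $j$th term $\mathcal N_j$ of the lower central series of $\mathcal N$ (the group-theoretic and Lie-theoretic versions agreeing here). Now take $c\in\overline N_j$, so $c\in N$ and $c^n\in N_j\subseteq\mathcal N_j$ for some $n$. The image of $c$ in $\mathcal N/\mathcal N_j$ then has trivial $n$th power, and since $\mathcal N/\mathcal N_j$ is torsion-free this image is trivial, i.e. $c\in\mathcal N_j$; hence $c\in N\cap\mathcal N_j$.

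For the reverse inclusion it suffices, by root-closedness of $\overline N_j$, to show that $N_j$ has finite index in $N\cap\mathcal N_j$. Indeed $N_j$ is characteristic, hence normal, in $N$, so it is normal of finite index in $N\cap\mathcal N_j$; given $c\in N\cap\mathcal N_j$, its image in the finite group $(N\cap\mathcal N_j)/N_j$ has finite order, so $c^m\in N_j$ for some $m\ge1$, and thus $c\in\overline N_j$. To prove the index is finite I will invoke the Mal'cev--Raghunathan theory of lattices in simply connected nilpotent Lie groups \cite{rag}. Both $N_j$ and $N\cap\mathcal N_j$ are lattices (discrete and cocompact) in $\mathcal N_j$: the former because the lower-central-series terms of a lattice are lattices in the corresponding terms of the ambient group, and the latter because $\mathcal N_j$ is a rational closed connected subgroup and a lattice meets any such subgroup in a lattice. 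Since $N_j\subseteq N\cap\mathcal N_j$, the covering $\mathcal N_j/N_j\to\mathcal N_j/(N\cap\mathcal N_j)$ yields $\vol(\mathcal N_j/N_j)=[\,N\cap\mathcal N_j:N_j\,]\cdot\vol(\mathcal N_j/(N\cap\mathcal N_j))$, and as both covolumes are finite and positive the index is finite.

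I expect the main obstacle to be precisely this last step: verifying that $N_j$ and $N\cap\mathcal N_j$ are both lattices in $\mathcal N_j$. This is the one point where I genuinely need Lie-theoretic input, namely the rationality of the lower central series with respect to the $\Q$-structure determined by $N$, together with the fact that a lattice intersects a rational closed connected subgroup in a lattice; everything else reduces to torsion-freeness of simply connected nilpotent Lie groups and the root-closedness built into the definition of $\overline N_j$. As a sanity check, it is reassuring that one obtains only finite index (rather than equality) of $N_j$ in $N\cap\mathcal N_j$, since this is exactly why the isolators $\overline N_j$, rather than the bare terms $N_j$, are the correct objects to match the Lie-theoretic filtration $\mathcal N_\bullet$.
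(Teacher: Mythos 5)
Your proof is correct and follows essentially the same route as the paper: the forward inclusion via torsion-freeness of $\mathcal N/\mathcal N_j$ (the paper phrases this as $n\log x\in\n_j\Rightarrow\log x\in\n_j$, which is the same fact), and the reverse inclusion by showing $N_j$ has finite index in $N\cap\mathcal N_j$ using Raghunathan's theorem that $N_j$ is cocompact in $\mathcal N_j$. The only difference is that your finite-index step invokes rationality of $\mathcal N_j$ and a covolume comparison, which is heavier than needed: once $N_j$ is known to be cocompact in $\mathcal N_j$, the discrete subgroup $N\cap\mathcal N_j$ containing it must contain it with finite index, with no appeal to rational subgroups or Haar measure.
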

\begin{proof}
Suppose $x\in\overline N_j$, so that there is some $n$ such that $x^n\in N_j$. This implies that $n\log x\in\n_j$, and hence that $\log x\in\n_j$, and so $x\in\mathcal N_j$. Conversely, $N_j$ is cocompact in $\mathcal N_j$ \cite{rag}, and so has finite index in $N\cap\mathcal N_j$. This implies that if $x\in N\cap\mathcal N_j$ then some power of $x$ belongs to $N_j$, as required.
\end{proof}
\begin{remark}
It is not hard to construct examples in which $N\cap\mathcal N_j\ne N_j$
\end{remark}
\begin{proof}[Proof of Proposition \ref{prop:Lie.poly.coord}]
The only part of the proof that is not straightforward is the requirement that the elements $e_{n_{i-1}+1},\ldots,e_{n_i}$ generate $\overline N_i/\overline N_{i+1}$. However, this is immediate from Lemma \ref{lem:ej.in.N_j} and property (i) of the Mal'cev basis.
\end{proof}
\begin{remark}
In \cite{alex} the role of degree is played by the so-called \emph{homogeneous degree} of a polynomial on a torsion-free nilpotent group. Lemma \ref{lem:ej.in.N_j} implies that it is equivalent to the notion of degree given in Definition \ref{def:coord.poly}.
\end{remark}

\section{The image of the Laplacian}\label{subsec:image}
In this section we prove Theorem \ref{thm:lap_poly_img}, which states that
\begin{equation}\label{eq:heilbronn}
\Delta(P^{k+2}(G))= P^k(G)
\end{equation}
for every finitely generated group $G$. One inclusion is quite elementary, as follows.
\begin{prop}
\label{prop:laplacian.image}
For any $k\geq 0$ the Laplacian operator satisfies
\[
\Delta(P^{k+2}(G))\subset P^k(G).
\]
\end{prop}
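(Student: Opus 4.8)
The plan is to exploit the fact that the Laplacian $\Delta_\mu$ is built purely from \emph{right} translations, so that it commutes with every \emph{left} derivative, and thereby to reduce the entire statement to the single observation that $\Delta_\mu$ annihilates $P^1(G)$. First I would record the commutation relation $\partial_u\Delta_\mu=\Delta_\mu\partial_u$ for every $u\in G$: writing $\Delta_\mu f(x)=f(x)-\sum_s\mu(s)f(xs)$, each map $f\mapsto f(\,\cdot\, s)$ is a right translation, while $\partial_u=L_{u^{-1}}-I$ involves only the identity and a left translation, and the left and right actions of $G$ on $\C^G$ commute. The interchange of the infinite sum defining $\Delta_\mu$ with these finite differences is justified by absolute convergence, which holds because polynomials have polynomial growth by Proposition \ref{prop:growth.of.polys} and $\mu$ has exponential tail.

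Granting this, to prove $\Delta_\mu f\in P^k(G)$ for $f\in P^{k+2}(G)$ it suffices to check that $\partial_{u_1}\cdots\partial_{u_{k+1}}\Delta_\mu f=0$ for all $u_1,\ldots,u_{k+1}\in G$, and by the commutation relation this equals $\Delta_\mu\bigl(\partial_{u_1}\cdots\partial_{u_{k+1}}f\bigr)$. I would then note that $g=\partial_{u_1}\cdots\partial_{u_{k+1}}f$ lies in $P^1(G)$, since for any $v_1,v_2\in G$ the quantity $\partial_{v_1}\partial_{v_2}g$ is a product of $k+3$ left derivatives applied to $f\in P^{k+2}(G)$ and hence vanishes. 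This leaves only the task of showing that $\Delta_\mu$ kills $P^1(G)$.

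For that final step I would write a general element of $P^1(G)$ as $g=\phi+c$, with $\phi:G\to\C$ a homomorphism into the additive group and $c$ constant (as used in the proof of Lemma \ref{lem:deriv.hom}). A short computation using $\phi(xs)=\phi(x)+\phi(s)$ and $\sum_s\mu(s)=1$ gives $\Delta_\mu g(x)=-\sum_s\mu(s)\phi(s)$, which is independent of $x$; the symmetry $\mu(s)=\mu(s^{-1})$ together with $\phi(s^{-1})=-\phi(s)$ then forces this sum to equal its own negative, so $\Delta_\mu g=0$. Combining the three steps yields the claim. The proof is essentially routine: the only genuine content is the commutation relation and the use of the symmetry of $\mu$ to annihilate the homomorphism part, and the sole point needing a little care is the absolute convergence justifying the interchange of the sum with the difference operators.
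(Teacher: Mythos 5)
Your proof is correct, but it takes a genuinely different route from the paper's. The paper's argument is a direct computation: using the symmetry of $\mu$ it writes
\[
\Delta p(x)=\tfrac{1}{2}\E_s[2p(x)-p(xs)-p(xs^{-1})]=-\tfrac{1}{2}\E_s[\p^s\p^s p(xs^{-1})],
\]
so that $\Delta p$ is exhibited as an average of second \emph{right} derivatives of $p$, each of which lies in $P^k(G)$ by the left/right equivalence (Proposition \ref{prop:def.equiv}) together with the right-invariance of $P^k(G)$ (Corollary \ref{cor:P^k.G.invariant}); absolute convergence then lets one pass left derivatives inside the expectation. You instead commute $\Delta_\mu$ past left derivatives --- legitimate because $\Delta_\mu$ is built from right translations, and justified at each stage by absolute convergence since every intermediate function is a polynomial, hence of polynomial growth --- thereby reducing the whole statement to the single fact that $\Delta_\mu$ annihilates $P^1(G)$, which you verify from the explicit description of $P^1(G)$ as homomorphisms plus constants, using the symmetry of $\mu$ to kill $\E_s[\phi(s)]$ (a sum that converges absolutely since homomorphisms grow at most linearly). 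Both arguments hinge on the symmetry of $\mu$ --- as they must, since the statement fails for non-symmetric measures --- but they deploy it at opposite ends: the paper symmetrizes the Laplacian at the outset, while you use it only in the final step to show that homomorphisms are harmonic. What your route buys is independence from Leibman's left/right equivalence and from Corollary \ref{cor:P^k.G.invariant}: you need only the definition of $P^k(G)$, the commutation of the left and right actions on $\C^G$, and the structure of $P^1(G)$. What the paper's route buys is brevity given that machinery, and a formula in which the drop of exactly two degrees is visible term by term.
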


\begin{proof}
Let $p\in P^k(G)$. As we remarked earlier, because $\mu$ is smooth and $p$ has polynomial growth, the sum
$$\E_s p(x) = \sum_s \mu(s) p(xs)$$
converges absolutely.

By the symmetry of $\mu$ we may write
\begin{align*}
\Delta p(x)&=\textstyle\frac{1}{2}\E_s[2p(x)-p(xs)-p(xs^{-1})]\\
&=-\textstyle\frac{1}{2}\E_s[\partial^s\partial^sp(xs^{-1})].
\end{align*}
Corollary \ref{cor:P^k.G.invariant} implies that $\partial^s\partial^s p(xs^{-1})$ is a polynomial of degree at most $k-2$ in $x$, which implies the desired inclusion.
\end{proof}

The second inclusion of \eqref{eq:heilbronn} is slightly trickier.
\begin{prop}\label{prop:lap.image.surj}
For any $k\geq 0$ the Laplacian operator satisfies
\[
\Delta(P^{k+2}(G))\supset P^k(G).
\]
\end{prop}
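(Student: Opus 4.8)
The plan is to work throughout with the coordinate description of polynomials from Proposition \ref{thm:polys.equiv}. Writing $W_j\subset P^j(G)$ for the span of the coordinate monomials of degree exactly $j$, we obtain a grading $P^k(G)=\bigoplus_{j=0}^k W_j$. Since $\Delta$ maps $P^j(G)$ into $P^{j-2}(G)$ by Proposition \ref{prop:laplacian.image}, the degree-$(j-2)$ component of $\Delta p$ depends only on the leading component of $p$, so we get a well-defined \emph{symbol} $\bar\Delta_j\colon W_j\to W_{j-2}$. I would first reduce Proposition \ref{prop:lap.image.surj} to surjectivity of every $\bar\Delta_j$ by an induction on $k$ that peels off leading terms: given $p\in P^k(G)$ with leading component $p_k\in W_k$, pick $q\in W_{k+2}$ with $\bar\Delta_{k+2}q=p_k$, so that $\Delta q-p\in P^{k-1}(G)$; the inductive hypothesis $\Delta(P^{k+1}(G))\supseteq P^{k-1}(G)$ then corrects the lower-order discrepancy and yields a preimage of $p$ in $P^{k+2}(G)$. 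The cases $k<0$ are trivial.

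Next I would compute the symbol. Using the symmetric expression $\Delta p=-\tfrac12\E_s[\partial^s\partial^s p(\cdot\, s^{-1})]$ from the proof of Proposition \ref{prop:laplacian.image}, and noting via Proposition \ref{prop:comm.deriv} that any $s$ lying in $\overline G_2$ contributes only to degrees $\le j-4$, the symbol is governed entirely by the degree-$1$ (horizontal) coordinates: $\bar\Delta_j$ is the top-degree part of the constant-coefficient operator $-\tfrac12\sum_s\mu(s)\,\partial_{\bar s}^2$, where $\bar s\in\Z^{d_1}$ is the image of $s$ in the abelianization. Because $\mu$ is symmetric and adapted, the covariance form $\Gamma=\sum_s\mu(s)\,\bar s\otimes\bar s$ is positive definite, so on functions of the degree-$1$ variables alone this is a nondegenerate second-order Euclidean operator. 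The classical argument of Heilbronn — reached by pushing $\mu$ forward along the abelianization $\pi\colon G\to\Z^{d_1}$ and using $\Delta_\mu(\bar p\circ\pi)=(\Delta_{\pi_*\mu}\bar p)\circ\pi$ — shows that it surjects onto the polynomials in those variables. This already gives the base case $k=0$, since $\bar\Delta_2(x_1^2)$ is a nonzero constant.

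The hard part is upgrading this to surjectivity of $\bar\Delta_j$ onto all of $W_{j-2}$, including monomials that involve higher-layer coordinates. The obstruction is that, in the associated graded group, the horizontal invariant derivatives carry higher-order tails: already in $H_3(\Z)$ (Example \ref{ex:heis}) one has $\partial^{\IP{0,1,0}}z=x$, so the symbol mixes the layers — for instance $\bar\Delta(yz)$ is proportional to $x$ — and is \emph{not} block-diagonal in the higher part. To handle this I would fix a horizontal coordinate $x_1$ with $\Gamma_{11}>0$ and study the degree-preserving map $W_{j-2}\to W_{j-2}$ sending $p\mapsto\bar\Delta_j(x_1^2 p)$. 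Multiplication by the horizontal square $x_1^2$ leaves the higher part of $p$ intact, and expanding $\bar\Delta_j(x_1^2 p)$ produces a nonzero multiple of $p$ together with corrections that either retain the higher part of $p$ (and are controlled within that block by the Euclidean surjectivity above) or strictly decrease the total weight carried by the higher-layer coordinates. Ordering monomials first by higher-layer weight and then, within each block, by the horizontal Heilbronn structure makes this map triangular with nonzero diagonal, hence invertible, which gives the required surjectivity.

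Making this block/triangular bookkeeping rigorous — precisely accounting for how the higher-order tails of the invariant derivatives redistribute weight between the layers, and checking that the within-block horizontal operator $f\mapsto\bar\Delta_j(x_1^2 f)$ is genuinely invertible — is where the real work lies; the degree induction of the first paragraph and the abelian computation of the second are then purely formal.
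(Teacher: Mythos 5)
Your reduction of the proposition to surjectivity of the symbols $\bar\Delta_j\colon W_j\to W_{j-2}$ is correct, and the degree induction that peels off leading terms is fine; but the argument stops exactly where the proposition becomes hard, and what remains is not routine bookkeeping. The triangularity claim for $T(p)=\bar\Delta_j(x_1^2p)$ --- that the corrections to the principal term $-\E_s[s_1^2]\,p$ either act ``within the block'' or strictly decrease the higher-layer weight --- is precisely the statement that left/right derivatives interact with the coordinate system in a controlled way; proving it requires (i) that layer-one coordinates add exactly under right multiplication, (ii) that $\partial^s$ applied to a higher-layer coordinate produces, even in top degree, only terms of strictly smaller higher-layer weight (this needs Lemma \ref{lem:coords.mult.add} and Proposition \ref{prop:comm.deriv} applied with care through the product rule), and (iii) an invertibility statement on each block. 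You explicitly defer all of this as ``where the real work lies,'' so the proposal is a strategy, not a proof. Moreover (iii) is misidentified: what your block-triangular scheme needs is invertibility of the composite $f\mapsto\bigl[\Delta^{\mathrm{Euc}}(x_1^2f)\bigr]_{\mathrm{top}}$ on homogeneous polynomials in the layer-one variables, and this does \emph{not} follow formally from ``Euclidean surjectivity'' of the symbol: surjectivity of $\bar\Delta^{\mathrm{Euc}}$ on all of $W^{\mathrm{Euc}}_{j}$ says nothing a priori about the image of the subspace $x_1^2\,W^{\mathrm{Euc}}_{j-2}$. (It can be proved by a further triangular argument in the $x_1$-power, essentially Lemma \ref{lem:base} with spectator variables, but that too is undone work.) Note also that your second paragraph's claim that the symbol is ``governed entirely by the degree-$1$ coordinates'' is false as stated --- your own Heisenberg computation $\bar\Delta(yz)\propto x$ in the third paragraph contradicts it; at best the graded pieces of the symbol with respect to the higher-layer-weight filtration are Euclidean, which is again the unproved point.

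For comparison, the paper closes this gap by a mechanism that avoids symbols and layer-mixing altogether: a double induction, on the largest coordinate index $m$ appearing in a monomial $p=q\cdot x_m^n$ and then on the power $n$, using the identity $\Delta(\hat q\,r)=p-\E_s[\hat q(xs)\,\partial^sr(x)]$ where $\Delta\hat q=q$ and $r(x)=x_m^n$ (Proposition \ref{prop:heil.surj}). The whole point is that the error term $\E_s[\hat q(xs)\,\partial^sr(x)]$ depends only on $x_1,\ldots,x_m$, has degree at most $k$, and contains $x_m$ only to powers strictly less than $n$ (by Lemma \ref{lem:coords.mult.add} and Proposition \ref{thm:polys.equiv}), so the inner induction absorbs it; exact control of how derivatives redistribute weight between layers is never needed. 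If you wish to salvage your route, a cleaner path than invertibility of $T$ would be: filter $W_j$ by higher-layer weight, show $\bar\Delta_j$ respects this filtration and acts on each graded piece as (Euclidean symbol) tensored with the identity on the higher-layer factor, and conclude from surjectivity on graded pieces of a finite filtration. But verifying that filtration compatibility is exactly the bookkeeping you deferred, so as written the proposal has a genuine gap at its core step.
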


It will be convenient to consider polynomials of the form $x_1^k$ separately. The following is essentially a special case of the result Heilbronn used to prove \cite[Theorem 3]{heilbronn}.
\begin{lemma}\label{lem:base}
Let $p(x)=x_1^k$ with $k\ge0$. Then there exists a polynomial $\hat p$ of degree $k+2$ such that $\hat p(x)$ depends only on $x_1$, and such that $\Delta\hat p=p$.
\end{lemma}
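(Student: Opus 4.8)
The statement to prove is Lemma \ref{lem:base}: for $p(x) = x_1^k$, there is a polynomial $\hat p$ of degree $k+2$ depending only on $x_1$ with $\Delta \hat p = p$. The key observation is that since $e_1 \in \overline G_1$ generates (along with the other coordinates) the abelianization, the coordinate $x_1$ behaves like a genuine linear coordinate: by Lemma \ref{lem:coords.mult.add}, multiplying $x$ on the right by any $s$ shifts $x_1$ by a fixed integer $s_1$ (the first coordinate of $s$), with no interference from higher coordinates. Therefore if $\hat p(x) = P(x_1)$ for some univariate polynomial $P:\Z \to \C$, then $\Delta \hat p(x) = \sum_s \mu(s)(P(x_1) - P(x_1 + s_1))$, which again depends only on $x_1$. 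So the whole problem collapses to a one-dimensional statement on $\Z$.

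**Reducing to $\Z$.** The plan is to define a "one-dimensional Laplacian" $\widetilde\Delta$ acting on functions $P:\Z \to \C$ by $\widetilde\Delta P(t) = \sum_s \mu(s)\big(P(t) - P(t + s_1)\big)$, and to show that $\widetilde\Delta$ maps univariate polynomials of degree $k+2$ onto univariate polynomials of degree $k$. Concretely, I would seek $P$ of degree $k+2$ with $\widetilde\Delta P(t) = t^k$ and then set $\hat p(x) = P(x_1)$; the displayed computation above shows $\Delta \hat p = p$ as required. Note that $\mu$ having exponential tail guarantees all the moments $\sum_s \mu(s)|s_1|^j$ converge, so $\widetilde\Delta$ genuinely maps polynomials to polynomials, and the symmetry of $\mu$ forces $\sum_s \mu(s)\, s_1 = 0$ (odd moments of a symmetric measure vanish), which is the structural fact that makes the degree drop by exactly $2$ rather than $1$.

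**The core degree count.** The heart of the argument is that $\widetilde\Delta$ lowers degree by exactly $2$. Expanding $P(t+s_1)$ by the binomial theorem (or by Taylor expansion of the univariate polynomial $P$),
\[
\widetilde\Delta P(t) = -\sum_{j \ge 1} \frac{P^{(j)}(t)}{j!}\sum_s \mu(s)\, s_1^{\,j}.
\]
The $j=1$ term vanishes by symmetry of $\mu$, so the leading surviving term is $j=2$, giving $\widetilde\Delta P(t) = -\tfrac{1}{2}\sigma^2 P''(t) + (\text{lower order})$, where $\sigma^2 = \sum_s \mu(s)\, s_1^2 > 0$ (positivity because $\mu$ is adapted, so some generator has nonzero first coordinate). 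Hence $\deg \widetilde\Delta P = \deg P - 2$ whenever $\deg P \ge 2$, and the leading coefficient is multiplied by a nonzero constant depending on $\sigma^2$. This shows $\widetilde\Delta$ is a linear map on the finite-dimensional space of univariate polynomials of degree $\le k+2$ whose image contains, for each target degree $\le k$, a polynomial with prescribed nonzero leading coefficient; by a downward induction on degree one solves $\widetilde\Delta P = t^k$ exactly, matching the top coefficient and then correcting the lower-order error terms (each of which lies in the span of $t^{k-1}, t^{k-2}, \dots$, already in the image by induction).

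**Anticipated obstacle.** The only genuinely delicate point is confirming that $\widetilde\Delta$ is well-defined and really triangular with respect to degree — i.e.\ that the infinite sum over $s$ converges and that $\sigma^2 \ne 0$. Convergence is immediate from the exponential-tail hypothesis; the positivity $\sigma^2 > 0$ requires that $\mu$ is adapted so that its support is not contained in the subgroup of elements with first coordinate zero, which would otherwise force $\widetilde\Delta$ to annihilate all univariate polynomials. I expect the bulk of the remaining work to be the routine downward induction matching coefficients, which I would present compactly rather than grinding through, citing that $\widetilde\Delta$ acts as an explicit upper-triangular operator with nonzero diagonal entries $-\tfrac12\sigma^2\binom{d}{2}$-type factors on the finite-dimensional space of univariate polynomials.
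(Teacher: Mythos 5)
Your proposal is correct and follows essentially the same route as the paper's proof: both reduce to the single coordinate $x_1$ via the homomorphism $x\mapsto x_1$, use the symmetry of $\mu$ to kill the first moment and adaptedness to get $\E_s[s_1^2]\ne0$, so that the Laplacian drops degree by exactly $2$ with a nonzero leading coefficient, and then correct the lower-order terms by induction on degree. The paper phrases the moment cancellation by pairing $s$ with $s^{-1}$ (writing $\Delta q(x)=\tfrac12\E_s[2x_1^{k+2}-(x_1+s_1)^{k+2}-(x_1-s_1)^{k+2}]$) and inducts on $k$ rather than invoking a triangular operator, but these are presentational differences only.
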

\begin{proof}
Define $q(x)=x_1^{k+2}$. The map $x \mapsto x_1$ is a homomorphism into an abelian group, which means that for every $s\in G$ we have $(xs)_1=x_1+s_1$, and so the symmetry of $\mu$ gives
\begin{align*}
\Delta q(x)&=\textstyle\frac{1}{2}\E_s[2x_1^{k+2}-(x_1+s_1)^{k+2}-(x_1-s_1)^{k+2}]\\
     &=-\E_s[s_1^2]{k+2\choose2}x_1^k+r(x)
\end{align*}
for some polynomial $r$ of degree at most $k-1$ such that $r(x)$ depends only on $x_1$. Note that the fact that the support of $\mu$ generates $G$ implies that there is some $s\in G$ with $\mu(s)>0$ and $s_1\ne0$, and so $\E_s[s_1^2]\ne0$.

We now prove the lemma by induction on $k$. If $k=0$ then $r = 0$ and the lemma follows easily. For $k>1$ the induction hypothesis implies that there is a polynomial $\hat r$ of degree at most $k+1$ such that $\hat r(x)$ depends only on $x_1$ and such that $\Delta\hat r=r$, and so we have
\[
\Delta(q-\hat r)(x)=\E_s[s_1^2]{k+2\choose2}x_1^k,
\]
and the lemma is proved.
\end{proof}

We will deduce Proposition \ref{prop:lap.image.surj} from the following more precise statement.
\begin{prop}\label{prop:heil.surj}
For every coordinate monomial $p$ of degree $k$ on $G$ such that $p(x)$ depends only on the coordinates $x_1,\ldots,x_m$, there exists some polynomial $\hat p$ of degree $k+2$ such that $\hat p(x)$ depends only on $x_1,\ldots,x_m$ and such that $\Delta\hat p=p$.
\end{prop}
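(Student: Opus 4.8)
The plan is to reduce the general coordinate monomial case to the one-variable base case handled by Lemma \ref{lem:base}, proceeding by induction on the degree $k$. Write the monomial as $p(x)=\lambda x_1^{a_1}\cdots x_m^{a_m}$, and let me isolate the highest-index coordinate appearing with nonzero exponent; say $p(x)=g(x)\cdot x_i^{a}$ where $g$ depends only on $x_1,\ldots,x_{i-1}$ (and possibly lower powers). The key structural fact I would exploit is that the coordinate maps are polynomials (Proposition \ref{prop:coord.mon.poly}), so $x_i\in P^{\sigma(i)}(G)$, together with the product rule \eqref{eq:product.rule} and the fact, from Lemma \ref{lem:coords.mult.add}, that multiplication by $s$ shifts the top coordinate $x_i$ additively by $s_i$ while only perturbing the lower coordinates in a controlled way. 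The guiding idea is that $x_i^{a}$ behaves, under $\Delta$, like a one-variable polynomial in $x_i$, and Lemma \ref{lem:base} tells us how to invert $\Delta$ on such one-variable powers.

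First I would compute $\Delta(q)$ where $q=\hat{g}\cdot x_i^{\,b}$ is a natural candidate preimage, with $b$ chosen to raise the degree appropriately. Expanding $\Delta(\hat g \, x_i^b)(x)$ using $\Delta=\tfrac12\E_s[2\,\cdot-(\cdot)^s-(\cdot)^{s^{-1}}]$ and the symmetry of $\mu$, and substituting $(xs)_i = x_i + (\text{shift depending on lower coords of }x \text{ and } s)$, the leading term should reproduce a constant multiple of $g\cdot x_i^{a}$ (the desired $p$, up to scaling by a nonzero constant coming from $\E_s[s_i^2]$ or a similar second-moment quantity), plus an error term that is a sum of coordinate monomials of strictly smaller degree, each still depending only on $x_1,\ldots,x_m$. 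The nonvanishing of the relevant second moment follows, as in Lemma \ref{lem:base}, from adaptedness of $\mu$ (the support generates $G$, so some $s$ genuinely shifts the $i$-th coordinate).

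The induction then closes: by linearity and the induction hypothesis applied to each lower-degree monomial in the error term, there is a polynomial correcting the error, whose degree is at most $k+1$ and which depends only on $x_1,\ldots,x_m$; subtracting it gives the required $\hat p$ of degree $k+2$ with $\Delta\hat p=p$ and the correct coordinate dependence. The main obstacle I anticipate is controlling the error term precisely: because the coordinate system is not a genuine $\Z^d$-grading, multiplying $x$ by $s$ does not act purely additively on the lower coordinates, so the cross terms arising from the expansion of $\hat g(xs)$ and from $(x_i+\text{shift})^b$ interact, and one must verify that every such cross term is indeed a coordinate monomial of degree \emph{strictly} less than $k$ that depends only on $x_1,\ldots,x_m$. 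Establishing this degree drop cleanly — most likely via Proposition \ref{prop:comm.deriv}, which guarantees that differentiating by an element of $\overline G_i$ lowers the polynomial degree by $i$ — is where the real work lies, and it is what makes the one-variable Lemma \ref{lem:base} insufficient on its own and forces the monomial-by-monomial induction.
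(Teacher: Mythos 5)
Your two-step strategy (candidate preimage for the leading term, inductive correction of the error) has the right shape, but both the candidate and the induction you propose fail, and the failure is exactly at the step you flag as ``where the real work lies''. Your second-moment heuristic (the constant $\E_s[s_i^2]$, as in Lemma \ref{lem:base}) presupposes the candidate $q=g\cdot x_i^{a+2}$, i.e.\ raising the power of the \emph{top} coordinate. Writing $(xs)_i=x_i+c_s(x)$ with $c_s$ depending only on $x_1,\ldots,x_{i-1}$ and $s$, the expansion of $\Delta q$ contains the term $\Delta g(x)\cdot x_i^{a+2}$, which has a \emph{higher} power of $x_i$ than $p$ does and vanishes only when $g$ is harmonic; Lemma \ref{lem:base} works precisely because there $g$ is constant. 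Concretely, in $\Z^2$ with $\mu$ uniform on the four standard generators and $p(x)=x_1^2x_2^2$, your candidate gives
\[
\Delta(x_1^2x_2^4)=-3x_1^2x_2^2-\tfrac12x_2^4-\tfrac12x_1^2,
\]
and the error term $\tfrac12x_2^4$ (which is exactly $-\Delta(x_1^2)\cdot x_2^4$) has degree $4=\deg p$: it is \emph{not} of strictly smaller degree, so your induction on $k$ does not close even in the abelian case. Moreover, when $\sigma(i)>1$ your candidate has degree $\deg g+\sigma(i)(a+2)=k+2\sigma(i)>k+2$, so even if the induction could be repaired, the preimage produced would have too high a degree; the proposition, and Theorem \ref{thm:lap_poly_img} which needs $\Delta(P^{k+2}(G))\supset P^k(G)$, require degree exactly $k+2$. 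Even under the charitable reading where $\hat g$ is a $\Delta$-preimage of $g$ and $b=a$, the resulting error $\E_s[\hat g(xs)\,\p^s(x_i^a)(x)]$ still only satisfies $\deg\le k$ (by Proposition \ref{prop:laplacian.image}), so an induction on total degree alone still does not close.

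The repair, which is how the paper argues, is to raise the degree in the \emph{lower} coordinates rather than the top one, and to induct on a different pair of parameters: the number $m$ of coordinates involved, and within that the exponent $n$ of $x_m$. Write $p=q\cdot r$ with $r(x)=x_m^n$ and $q$ depending only on $x_1,\ldots,x_{m-1}$; the $m$-induction supplies $\hat q$ with $\Delta\hat q=q$, of degree at most $\deg q+2$ and depending only on $x_1,\ldots,x_{m-1}$, and the candidate is $\hat q\,r$, of degree exactly $k+2$. Then
\[
\Delta(\hat q r)(x)=p(x)-\E_s[\hat q(xs)\,\p^s r(x)],
\]
and the error $\E_s[\hat q(xs)\,\p^s r(x)]$ depends only on $x_1,\ldots,x_m$ (Lemma \ref{lem:coords.mult.add}), has degree at most $k$ (Proposition \ref{prop:laplacian.image}), and---the key point your scheme lacks---has $x_m$-exponent strictly less than $n$, because $\p^s r\in P^{\sigma(m)n-1}(G)$ while any monomial containing $x_m^n$ already has degree $\sigma(m)n$ (Proposition \ref{thm:polys.equiv}). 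The inner induction is therefore on the top exponent, which genuinely decreases, rather than on the total degree, which need not; and it closes precisely because the candidate keeps the factor $x_m^n$ fixed instead of raising it.
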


\begin{proof}
We proceed by induction on $m$, noting that the case $m=1$ follows from Lemma \ref{lem:base}. When $m>1$, the monomial $p$ is of the form $qr$, with $r(x)=x_m^n$ and $q(x)$ depending only on those $x_i$ with $i<m$. We prove this case by a second induction, this time on $n$, the case $n=0$ following from the $m$-induction hypothesis. For $n\ge1$, the $m$-induction hypothesis implies that there exists some $\hat q$ of degree at most $\deg q+2$ such that $\hat q(x)$ depends only on $x_1,\ldots,x_{m-1}$, and such that $\Delta\hat q=q$.

Now note that
\begin{align*}
\Delta(\hat qr)(x)&=\E_s[\hat q(x)r(x)-\hat q(xs)r(xs)]\\
      &=\E_s[(\hat q(x)-\hat q(xs))r(x)+\hat q(xs)(r(x)-r(xs))]\\
      &=\Delta\hat q(x) \cdot r(x)-\E_s[\hat q(xs)\partial^sr(x)]\\
      &=q(x)r(x)-\E_s[\hat q(xs)\partial^sr(x)]\\
      &=p(x)-\E_s[\hat q(xs)\partial^sr(x)].
\end{align*}
The expression $\partial^sr(x)$ depends only on $x_1,\ldots,x_m$ by Lemma \ref{lem:coords.mult.add}, whilst the highest power of $x_m$ featuring in any term of $\partial^sr(x)$ is less than $n$ by  Proposition \ref{thm:polys.equiv}. Since $\hat q(x)$ depends only on $x_1,\ldots,x_{m-1}$, this implies that $\E_s[\hat q(xs)\partial^sr(x)]$ depends only on $x_1,\ldots,x_m$, and that the highest power of $x_m$ appearing in any of its terms is less than $n$. Moreover, since $\deg(\hat qr)=k+2$ and $\deg p=k$, Proposition \ref{prop:laplacian.image} implies that the polynomial $\E_s[\hat q(xs)\partial^sr(x)]$ has degree at most $k$.

The $n$-induction hypothesis therefore implies that there is some polynomial $v$ of degree at most $k+2$ such that $v(x)$ depends only on $x_1,\ldots,x_m$ and such that $\Delta v(x)=\E_s[\hat q(xs)\partial^sr(x)]$. It follows that taking $\hat p=\hat qr+v$ satisfies the proposition.
\end{proof}
\begin{proof}[Proof of Proposition \ref{prop:lap.image.surj}]
Proposition \ref{prop:comm.deriv} implies that if $p\in P^k(G)$ then $p(x)$ depends only on the coordinates $x_1,\ldots,x_{n_k}$, and so the desired inclusion follows from Proposition \ref{prop:heil.surj}.
\end{proof}
\begin{proof}[Proof of Theorem \ref{thm:lap_poly_img}]
The theorem is immediate from Propositions \ref{prop:laplacian.image} and \ref{prop:lap.image.surj}.
\end{proof}

\section{Conclusion of  the main results:  passing to a finite-index subgroup}\label{subsec:nilp.reduc}
In this section we deduce Theorems \ref{thm:alex.cosets}  and \ref{thm:dim} from Theorems \ref{thm:alex.gen} and \ref{thm:lap_poly_img}.

\begin{prop}[{\cite[Proposition 5.4]{mey-yad}}]\label{prop:dim}
Let $G$ be a finitely generated group with a subgroup $H$ of finite index. Let $\mu$ be a probability measure on $G$ satisfying the standing assumptions.
Then there is a probability measure $\mu_H$ on $H$ satisfying the standing assumptions such that the restriction map from functions on $G$ to functions on $H$ defines a linear bijection between $H^k(G,\mu)$ and $H^k(H,\mu_H)$.
\end{prop}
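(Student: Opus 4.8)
The plan is to realise $\mu_H$ as the \emph{induced} (or \emph{trace}) random walk obtained by observing the $\mu$-random walk on $G$ only at the times when it lies in $H$. Concretely, let $(s_i)_{i\ge1}$ be i.i.d.\ with law $\mu$, so that $W_n=s_1\cdots s_n$ is the $\mu$-random walk started at the identity, and set $\tau=\inf\{n\ge1:W_n\in H\}$. Projecting $W_n$ to the right-coset space $H\backslash G$ yields a Markov chain on a finite state space of size $[G:H]$, which is irreducible because $\mu$ is adapted; hence $\tau<\infty$ almost surely, and I would take $\mu_H$ to be the law of $W_\tau$ on $H$. More generally, for $x\in G$ I would write $\sigma_x$ for the first (possibly zeroth) time at which the walk $xW_n$ lies in $H$.

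Verifying the standing assumptions for $\mu_H$ is largely formal. Symmetry follows from reversibility: the $\mu$-walk is reversible with respect to the counting measure on $G$ precisely because $\mu$ is symmetric, and the induced chain on $H$ inherits reversibility with respect to the counting measure on $H$, which is exactly the statement $\mu_H(h)=\mu_H(h^{-1})$. Adaptedness follows by decomposing any $\mu$-path in $G$ from $1$ to a given $h\in H$ into its successive excursions between visits to $H$: each excursion contributes a step in $\supp\mu_H$, so $\supp\mu_H$ generates $H$. The exponential tail is the one genuinely quantitative point: since the finite coset chain is irreducible, $\tau$ has a geometric, hence exponential, tail, and combining this with the exponential tail of $\mu$ shows that $|W_\tau|$ has exponential tail. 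As $\{W_\tau=h\}\subseteq\{|W_\tau|=|h|\}$ and the word metrics on $G$ and on $H$ are bi-Lipschitz equivalent on $H$ (finite index), this yields $\mu_H(h)\le e^{-\varepsilon'|h|_H}$ for a suitable $\varepsilon'>0$, where $|h|_H$ denotes the word length of $h$ in $H$.

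For the bijection, the key tool is optional stopping. If $f\in H^k(G,\mu)$ then harmonicity makes $f(xW_n)$ a martingale, and stopping at $\sigma_x$ gives $f(x)=\E[f(xW_{\sigma_x})]$; since $xW_{\sigma_x}\in H$ this both shows that $f$ is determined by $f|_H$ (injectivity) and, on taking $x\in H$, that $f|_H$ is $\mu_H$-harmonic. That $f|_H$ still has polynomial growth of degree at most $k$ follows from the bi-Lipschitz equivalence of the two metrics, exactly as used in the proof of Theorem \ref{thm:alex.gen}. For surjectivity, given $g\in H^k(H,\mu_H)$ I would define its harmonic extension $f(x)=\E[g(xW_{\sigma_x})]$; a one-step conditioning argument shows $\Delta_\mu f(x)=0$ both for $x\notin H$ (where $\sigma_x\ge1$) and for $x\in H$ (where one uses the $\mu_H$-harmonicity of $g$), while $f|_H=g$ is immediate.

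The main obstacle is making the optional-stopping and growth arguments rigorous for functions that are merely polynomially bounded rather than bounded. This requires a moment estimate of the form $\E[|xW_{\sigma_x}|^k]=O(|x|^k)$, which I would obtain by writing $|xW_{\sigma_x}|\le|x|+\sum_{i\le\sigma_x}|s_i|$ and bounding the $k$th moment of the displacement $\sum_{i\le\sigma_x}|s_i|$ by combining the exponential tail of $\sigma_x$ with that of $\mu$, i.e.\ controlling a sum of an exponentially-distributed random number of increments each having exponential tail. The same estimate simultaneously dominates the martingale so that optional stopping applies, controls the growth of the extension $f$, and confirms that the defining expectations converge absolutely; everything else is a routine verification.
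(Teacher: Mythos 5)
You give essentially the argument behind the cited result: this paper does not prove Proposition \ref{prop:dim} itself but imports it from \cite[Proposition 5.4]{mey-yad}, and the proof there is precisely your induced-walk construction --- $\mu_H$ is the hitting (trace) measure of the $\mu$-walk stopped at its return to $H$, symmetry follows from reversibility (equivalently, path reversal), adaptedness from decomposing paths into excursions between visits to $H$, the exponential tail from the geometric tail of the hitting time of the finite irreducible coset chain, and the bijection from optional stopping (giving injectivity and $\mu_H$-harmonicity of restrictions) together with the harmonic extension $f(x)=\E[g(xW_{\sigma_x})]$ (giving surjectivity), with exactly the moment estimate $\E[(\,|x|+\sum_{i\le\sigma_x}|s_i|\,)^k]=O(|x|^k)$ you identify as the crux. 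The one point to watch is that your Chernoff and moment bounds use the exponential-moment form of the tail hypothesis, $\E_\mu[e^{c|s|}]<\infty$, which is the form assumed in \cite{mey-yad}; the pointwise bound $\mu(x)\le e^{-\varepsilon|x|}$ as literally stated in this paper is equivalent to it only for groups of subexponential growth (which covers every application made of Proposition \ref{prop:dim} here), so this is a presentational mismatch inherited from the paper's definition rather than a gap in your argument.
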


\begin{proof}[Proof of Theorem \ref{thm:dim}]
Kleiner's theorem \cite{kleiner} implies that $\dim H^k(G,\mu)<\infty$, and hence Theorem \ref{thm:alex.gen} implies that there is \emph{some} finite-index nilpotent subgroup $H$ of $G$ such that the restriction to $H$ of any $f\in H^k(G,\mu)$ is a polynomial of degree $k$. Proposition \ref{prop:dim}
therefore implies that $H^k(H,\mu_H)$ is the kernel of the Laplacian $\Delta_{\mu_H}$ applied to $P^k(H)$, and so Theorem \ref{thm:lap_poly_img} and Proposition \ref{prop:dim} imply that
$$\dim H^k(G,\mu) = \dim P^k(H) - \dim P^{k-2}(H).$$
Now simply note that $H \cap N$ is a finite-index subgroup of both $H$ and $N$, and hence that
Proposition \ref{prop:restriction} gives $\dim P^j(H)= \dim P^j(N)$ for every $j$.
\end{proof}

\begin{remark}
Strictly speaking, Kleiner's theorem states that $\dim H^k(G,\mu)<\infty$ when $G$ is finitely generated group of polynomial growth and 
$\mu = \frac{1}{|S|}\sum_{s \in S} \delta_s$ is the uniform probability measure on a finite symmetric generating set $S$. A relatively straightforward modification of the proof gives the same result for a finitely supported symmetric measure $\mu$, as used in the proof of Theorem \ref{thm:dim}. A similar generalisation in a different direction appears in \cite{bdky}.
\end{remark}

\begin{proof}[Proof of Theorem \ref{thm:alex.cosets}]
Let $\mu_N$ be the measure on $N$ given by Proposition \ref{prop:dim}. Theorem \ref{thm:dim} implies that $\dim H^k(N,\mu_N)=\dim P^k(N)-\dim P^{k-2}(N)$. Theorem \ref{thm:lap_poly_img} and Proposition \ref{prop:growth.of.polys}, on the other hand, imply that $\dim\left(H^k(N,\mu_N)\cap P^k(N)\right)$ also equals $\dim P^k(N)-\dim P^{k-2}(N)$. This implies that $H^k(N,\mu_N)\cap P^k(N)  = H^k(N,\mu_N)$ and so Proposition \ref{prop:dim} implies that every $f\in H^k(G,\mu)$ restricts to an element of $P^k(N)$.

To see that such an $f$ is a polynomial of degree $k$ with respect to $N$ on all of $G$, we must show more generally that for every $t\in G$ the function $N\to\R$ defined by $x\mapsto f(xt)$ is a polynomial of degree $k$. The function $p_t=t^{-1}f$ belongs to $H^k(G,\mu)$, and so by the first part of the theorem it restricts to a polynomial of degree at most $k$ on the nilpotent subgroup $t^{-1}Nt$. In particular, for every $x\in N$ and every $u_1,\ldots,u_{k+1}\in N$ we have
\[
\partial_{t^{-1}u_1t}\cdots\partial_{t^{-1}u_{k+1}t}p_t(t^{-1}xt)=0,
\]
and so the function $q_t:N\to\R$ defined by $q_t(x)=p_t(t^{-1}xt)$ is a polynomial of degree at most $k$ on $N$. However, $q_t(x)=p_t(t^{-1}xt)=t^{-1}f(t^{-1}xt)=f(xt)$, and so $f$ is indeed a polynomial with respect to $N$ on $G$.

Finally, $q_t$ factors through $N/N_{k+1}$ by Lemma \ref{lem:comm.deriv}, and so $N_{k+1}$ acts trivially on $f$ from the left, as required.
\end{proof}

\section{Additional corollaries}\label{sec:cor}

\subsection{Asymptotics as $k\to\infty$}\label{subsec:hj}
The purpose of this subsection is to prove Corollary \ref{cor:hj}. We make repeated use of the standard and easily verified computation
\[
\dim P^k(\Z^d)-\dim P^{k-1}(\Z^d)={d+k-1\choose k},
\]
which in turn implies that
\begin{equation}\label{eq:hom.polys'}
k^{d-1}\ll_d\dim P^k(\Z^d)-\dim P^{k-1}(\Z^d)\ll_dk^{d-1}
\end{equation}
for $k\ge1$. It will also be convenient to introduce some new notation. Let $x_1,\ldots,x_d$ be real variables, and let $\sigma_1,\ldots,\sigma_d$ be positive integers. Then we define the space
\[
\overline P^k(x_1,\ldots,x_d;\sigma_1,\ldots,\sigma_d)
\]
to be the subspace of polynomials in the $x_i$ spanned by the monomials $x_1^{a_1}\cdots x_d^{a_d}$ such that $\sigma_1a_1+\ldots+\sigma_da_d=k$.
\begin{lemma}\label{lem:hom}
For every $k$ we have
\begin{enumerate}
\renewcommand{\labelenumi}{(\roman{enumi})}
\item$\dim\overline P^k(x_1,\ldots,x_d;1,\sigma_2,\ldots,\sigma_d)\ge\dim\overline P^k(x_1,\ldots,x_d;1,\sigma_2,\ldots,\sigma_{d-1},\sigma_d+1)$ and
\item$\dim\overline P^k(x_1,\ldots,x_d;1,\sigma_2,\ldots,\sigma_d)\ge\dim\overline P^{k-1}(x_1,\ldots,x_d;1,\sigma_2,\ldots,\sigma_d)$,
\end{enumerate}
and for every $r,\ell$ we have
\begin{enumerate}
\renewcommand{\labelenumi}{(\roman{enumi})}
\setcounter{enumi}{2}
\item$\dim\overline P^{\ell r}(x_1,\ldots,x_d;1,\ell,\ldots,\ell)\ge\dim\overline P^r(x_1,\ldots,x_d;1,\ldots,1)$.
\end{enumerate}
\end{lemma}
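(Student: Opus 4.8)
The plan is to translate each dimension into a count of lattice points and then compare these counts directly. Recall that $\dim\overline P^k(x_1,\ldots,x_d;\sigma_1,\ldots,\sigma_d)$ is by definition the number of nonnegative integer tuples $(a_1,\ldots,a_d)$ satisfying $\sigma_1a_1+\cdots+\sigma_da_d=k$. In all three statements the first weight is $\sigma_1=1$, and I would exploit this as the key structural observation: given any $(a_2,\ldots,a_d)\in\Z_{\ge0}^{d-1}$, the equation has a necessarily unique solution for $a_1$, namely $a_1=k-\sum_{i\ge2}\sigma_ia_i$, precisely when $\sum_{i=2}^d\sigma_ia_i\le k$. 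Hence
\[
\dim\overline P^k(x_1,\ldots,x_d;1,\sigma_2,\ldots,\sigma_d)=\#\Big\{(a_2,\ldots,a_d)\in\Z_{\ge0}^{d-1}:\textstyle\sum_{i=2}^d\sigma_ia_i\le k\Big\}.
\]
The weight-one coordinate absorbs all the slack, converting an equality constraint into an inequality constraint in one fewer variable.

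With this reformulation in hand, each part reduces to an elementary monotonicity property of such solution counts. For part (i), replacing $\sigma_d$ by $\sigma_d+1$ only tightens the constraint, since $(\sigma_d+1)a_d\ge\sigma_da_d$; thus every tuple counted on the right-hand side is also counted on the left, which is exactly the claimed inequality. For part (ii), passing from $k$ to $k-1$ likewise only tightens the constraint $\sum_{i\ge2}\sigma_ia_i\le k$, and the same inclusion of solution sets yields the inequality. For part (iii), the left-hand count is the number of $(a_2,\ldots,a_d)$ with $\ell(a_2+\cdots+a_d)\le\ell r$, while the right-hand count is the number with $a_2+\cdots+a_d\le r$; dividing the first inequality through by $\ell$ shows that these two constraints define the \emph{same} set, so the two dimensions are in fact equal and the asserted inequality holds trivially.

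I do not expect any serious obstacle here: once the weight-one reformulation is in place, all three parts follow from the single observation that the number of nonnegative integer solutions of a linear inequality $\sum_i\sigma_ia_i\le k$ is monotone both in each weight $\sigma_i$ and in the bound $k$. The only point requiring genuine care is making the reformulation rigorous, in particular confirming that the unique candidate $a_1$ is automatically an integer (which it is, since all the quantities involved are integers) and that it is nonnegative exactly under the stated inequality. It is also worth recording in the write-up that part (iii) holds with equality, since this is marginally stronger than the stated bound and makes transparent why the factor $\ell$ in the weights is matched by the homogeneous degree $\ell r$.
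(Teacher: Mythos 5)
Your proof is correct. The paper proves the lemma differently in form but identically in substance: it exhibits explicit injections between the defining monomial bases, namely $x_1^{a_1}\cdots x_d^{a_d}\mapsto x_1^{a_1+a_d}x_2^{a_2}\cdots x_d^{a_d}$ for (i), $x_1^{a_1}\cdots x_d^{a_d}\mapsto x_1^{a_1+1}x_2^{a_2}\cdots x_d^{a_d}$ for (ii), and $x_1^{a_1}\cdots x_d^{a_d}\mapsto x_1^{\ell a_1}x_2^{a_2}\cdots x_d^{a_d}$ for (iii). Once you eliminate the weight-one variable $a_1$ as a slack variable, your inclusions of the solution sets $\{(a_2,\ldots,a_d):\sum_{i\ge2}\sigma_ia_i\le k\}$, read back through that correspondence, are exactly these maps (for instance, in (i) the identity on $(a_2,\ldots,a_d)$ forces $a_1\mapsto a_1+a_d$). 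What your packaging buys: the injectivity and degree checks become self-evident monotonicity statements about lattice-point counts; you obtain the marginally stronger conclusion that (iii) holds with equality (true also for the paper's map, since any monomial of weighted degree $\ell r$ with weights $(1,\ell,\ldots,\ell)$ automatically has $a_1$ divisible by $\ell$, though the paper does not record this); and it makes transparent exactly where the hypothesis $\sigma_1=1$ enters --- a point the paper addresses only in a remark following the lemma, where it notes that this hypothesis is necessary in parts (i) and (ii).
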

\begin{proof}
In each case we exhibit an injection from the defining basis of the second space into the defining basis of the first space. Specifically, we take the maps
\begin{enumerate}
\renewcommand{\labelenumi}{(\roman{enumi})}
\item$x_1^{a_1}\cdots x_d^{a_d}\mapsto x_1^{a_1+a_d}x_2^{a_2}\cdots x_d^{a_d}$;
\item$x_1^{a_1}\cdots x_d^{a_d}\mapsto x_1^{a_1+1}x_2^{a_2}\cdots x_d^{a_d}$; and
\item$x_1^{a_1}\cdots x_d^{a_d}\mapsto x_1^{la_1}x_2^{a_2}\cdots x_d^{a_d}$.
\end{enumerate}
\end{proof}
\begin{remark}
It is not hard to construct examples showing that the assumption that one of the $\sigma_i$ is equal to $1$ is necessary in parts (i) and (ii) of Lemma \ref{lem:hom}.
\end{remark}
\begin{proof}[Proof of Corollary \ref{cor:hj}]
The upper bound follows from Theorem \ref{thm:dim} by (\ref{eq:hom.polys'}) and Lemma \ref{lem:hom} (i). The lower bound follows from Theorem \ref{thm:dim} by (\ref{eq:hom.polys'}) and all three parts of Lemma \ref{lem:hom}, using the fact that for every $\ell$-step nilpotent group $N$ of rank $d$ we have $\ell\le d$ and $\sigma(i)\le\ell$ for every $i$. In each case we also use the fact that $\sigma(1)=1$ for every nilpotent group $N$.
\end{proof}

\subsection{More on the structure of $H^k(G,\mu)$ in the virtually nilpotent case}

Theorem \ref{thm:alex.cosets} can be stated as follows. Let $G$ be a finitely generated group with a probability measure $\mu$ that satisfies the standing assumptions and is finitely supported, and let $N$ be a nilpotent subgroup of finite index in $G$.
Let $T \subset G$ be a \emph{right-transversal} for $N$ in $G$, so that every element of $G$ can be uniquely written in the form $xt$ with $t \in T$ and $x \in N$.
Then every element $f\in H^k(G,\mu)$ is of the form
\begin{equation}\label{eq:polys.on.G}
f(xt)= p_t(x),
\end{equation}
with each $p_t \in P^k(N)$.% is $\mu_N$-harmonic.

Alexopoulos \cite{alex} also gives an additional piece of information, which we now reproduce for completeness.

\begin{prop}[Alexopoulos]\label{prop:alex_cosets}
If $f \in H^k(G,\mu)$ is of the form \eqref{eq:polys.on.G} then for any $t_1,t_2 \in T$ we have $p_{t_1} - p_{t_2} \in P^{k-1}(N)$.
\end{prop}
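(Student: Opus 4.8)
We have $f\in H^k(G,\mu)$ written via a right-transversal $T$ for $N$ in $G$ as $f(xt)=p_t(x)$ with each $p_t\in P^k(N)$, and we must show $p_{t_1}-p_{t_2}\in P^{k-1}(N)$ for any $t_1,t_2\in T$, i.e.\ that the difference drops in degree.

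Let me think carefully about what this is really saying and what tools are available.

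The key structural fact is that $f$ is harmonic and that the *leading* behavior of $f$ should be determined by something that doesn't depend on the coset representative. Intuitively, the top-degree part of each $p_t$ should be the same. Let me think about why.

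**The natural approach via derivatives.** The degree of $p_{t_1}-p_{t_2}$ being at most $k-1$ means $\partial_{u_1}\cdots\partial_{u_k}(p_{t_1}-p_{t_2})=0$ for all $u_1,\ldots,u_k\in N$. By Lemma \ref{lem:deriv.hom}, the top-degree behavior of a polynomial $p\in P^k(N)$ is captured by the homomorphism $\varphi_p(u_1,\ldots,u_k)=\partial_{u_1}\cdots\partial_{u_k}p(1)$, which is multilinear. So what I really need to show is that this top-degree multilinear form is the *same* for $p_{t_1}$ and $p_{t_2}$.

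Now here's the idea: the quantity $\partial_{u_1}\cdots\partial_{u_k}p_t(1)$, with $u_i\in N$ and the derivatives taken on $N$, should be expressible in terms of $f$ itself via derivatives *on $G$*. Specifically $p_t(x)=f(xt)$, so $\partial_u p_t(x)=p_t(ux)-p_t(x)=f(uxt)-f(xt)=(\partial_u f)(xt)$ where now $\partial_u$ is the derivative of $f$ on $G$ (using $u\in N\le G$). Iterating, $\partial_{u_1}\cdots\partial_{u_k}p_t(1)=(\partial_{u_1}\cdots\partial_{u_k}f)(t)$. So the top-degree form of $p_t$ is $t\mapsto(\partial_{u_1}\cdots\partial_{u_k}f)(t)$, and I must show this is *independent of $t\in T$*, equivalently that the function $g=\partial_{u_1}\cdots\partial_{u_k}f$ on $G$ takes the same value on every coset representative, i.e.\ is constant on $G$.

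**The crux.** The function $g=\partial_{u_1}\cdots\partial_{u_k}f$ is a left-derivative of a harmonic function, hence harmonic (left derivatives of harmonic functions are harmonic, since $\Delta$ acts on the right and commutes with left translation). Moreover $f\in H^k$, and each left derivative drops the growth degree by one via Proposition \ref{prop:growth.of.polys}, so $g$ has $\|g\|_0<\infty$, i.e.\ $g$ is a *bounded* harmonic function on $G$. Since $N$ is a nilpotent finite-index subgroup, $G$ is virtually nilpotent and hence has the Liouville property, so $g$ is constant on $G$. This is exactly what is needed: $g(t_1)=g(t_2)$ for all $t_1,t_2$, so $\varphi_{p_{t_1}}=\varphi_{p_{t_2}}$ as multilinear forms, and therefore $\partial_{u_1}\cdots\partial_{u_k}(p_{t_1}-p_{t_2})(1)=0$ for all $u_i\in N$. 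Combined with Lemma \ref{lem:deriv.hom} (the degree-$k$ behavior of a polynomial in $P^k(N)$ is governed by its values at the identity after $k$ derivatives, which being multilinear vanish identically once they vanish at $1$), this forces $p_{t_1}-p_{t_2}\in P^{k-1}(N)$.

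**Where the work concentrates.** The main obstacle is the last deduction: from $\partial_{u_1}\cdots\partial_{u_k}(p_{t_1}-p_{t_2})(1)=0$ for all $u_i\in N$ I need to conclude $\partial_{u_1}\cdots\partial_{u_k}(p_{t_1}-p_{t_2})(x)=0$ for all $x$, not just $x=1$. This requires the function $\partial_{u_1}\cdots\partial_{u_k}(p_{t_1}-p_{t_2})$ to be constant, which it is, because $p_{t_1}-p_{t_2}\in P^k(N)$ so the $k$-th derivative lies in $P^0(N)$ (constants) by Corollary \ref{cor:P^k.G.invariant}. Thus vanishing at the identity gives vanishing everywhere, completing the degree drop. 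The argument is clean because the finite-index nilpotent subgroup supplies the Liouville property, and I would flag that the only genuinely external ingredient is this Liouville property for virtually nilpotent groups, exactly as used in the proof of Theorem \ref{thm:alex.gen}.
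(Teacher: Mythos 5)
Your proof is correct, and its skeleton is the same as the paper's: differentiate $k$ times by elements of $N$, note that $g=\p_{u_1}\cdots\p_{u_k}f$ is harmonic and constant on each coset $Nt$ (its value there being $\p_{u_1}\cdots\p_{u_k}p_t(1)$, since $p_t\in P^k(N)$ forces the $k$-fold derivative into $P^0(N)$), conclude that $g$ is constant on all of $G$, and hence that $\p_{u_1}\cdots\p_{u_k}(p_{t_1}-p_{t_2})=0$ identically. The one genuine difference is the tool used to get constancy of $g$: the paper observes that $g$ is left-$N$-invariant, hence takes at most finitely many values, and applies the elementary maximum principle (a harmonic function attaining its maximum is constant, by adaptedness of $\mu$), whereas you observe that $g$ is bounded and invoke the Liouville property of virtually nilpotent groups \cite{kai-ver}. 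Both are valid; the paper's route is more self-contained, while yours imports a deeper theorem, albeit one the paper already uses in the proof of Theorem \ref{thm:alex.gen}. Two small points. First, your justification that ``each left derivative drops the growth degree by one via Proposition \ref{prop:growth.of.polys}'' is not a valid general principle for functions in $H^k(G,\mu)$ --- discrete derivatives of an arbitrary function of polynomial growth need not have smaller growth degree; what makes the step work is precisely the hypothesis \eqref{eq:polys.on.G}, which shows the restriction of $g$ to each coset is the constant $\p_{u_1}\cdots\p_{u_k}p_t(1)$. Since you establish that identity anyway, this is an imprecision rather than a gap. Second, the appeal to Lemma \ref{lem:deriv.hom} (multilinearity of the top-degree form) is superfluous: constancy of the $k$-fold derivatives, which holds by the definition of $P^k(N)$, is all that is needed.
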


\begin{proof}
Since $p_t \in P^k(N)$, for every $t\in T$ and every $x_1,\ldots,x_k \in N$ the function
$\p_{x_1}\ldots \p_{x_k} p_t$ is constant  on $N$.
It follows that $N$ acts trivially from the left on $\p_{x_1}\ldots \p_{x_k} f$, and in particular that $\p_{x_1}\ldots \p_{x_k} f$ takes at most finitely many different values. By the maximum principle, the $\mu$-harmonic function $\p_{x_1}\ldots \p_{x_k} f$ must therefore be constant on $G$. In particular, for any $t_1,t_2 \in T$ we have $\p_{x_1}\cdots \p_{x_k}(p_{t_1}-p_{t_2})=0$,
and so $p_{t_1} - p_{t_2} \in P^{k-1}(N)$.
\end{proof}

\subsection{Groups with $\dim H^k(G,\mu) < \infty$}\label{subsec:gg}
In this subsection we prove Corollary \ref{cor:dim.indep.of.mu}.
Note that if $\pi:G \to \Gamma$ is a homomorphism and $\mu$ is a probability measure on $G$ satisfying the standing assumptions then the measure $\mu \circ \pi^{-1}$ on $\pi(G)$ also satisfies the standing assumptions, and
\begin{equation}\label{eq:proj.measure}
\dim H^k(G,\mu)\ge\dim H^k(\pi(G),\mu \circ \pi^{-1}).
\end{equation}

\begin{proof}[Proof of Corollary \ref{cor:dim.indep.of.mu}]Just as in the proof of Theorem \ref{thm:alex.gen}, the action of $G$ on $H^k(G,\mu)$ defines a homomorphism $\pi:G \to GL_n(\C)$ whose image $\pi(G)$ is virtually nilpotent. An inspection of the proof of Theorem \ref{thm:dim} reveals that it applies also to a non-finitely supported probability measure, provided it satisfies the standing assumptions and results in a finite-dimensional space of harmonic functions of polynomial growth of degree at most $k$. In particular, this implies that $\dim H^k(\pi(G),\nu \circ \pi^{-1})=\dim H^k(\pi(G),\mu \circ \pi^{-1})=\dim H^k(G,\mu)$. It therefore follows from (\ref{eq:proj.measure}) that $\dim H^k(G,\nu)\ge\dim H^k(G,\mu)$, and so the equality of dimensions follows by symmetry.
\end{proof}

\subsection{The kernel of  $G$ acting on $H^k(G,\mu)$}\label{subsec:kernel}
\begin{proof}[Proof of Corollary \ref{cor:kernel}]
If $k<\ell$ then $K$ is infinite by Theorem \ref{thm:alex.cosets} and Lemma \ref{lem:comm.deriv}.
To prove the converse, it is sufficient to show that $N\cap K$ is finite. By Proposition \ref{prop:dim} there is a smooth, symmetric generating probability measure $\mu_N$ on $N$
such that the restriction map defines a bijection between $H^\ell(G,\mu)$ and $H^\ell(N,\mu_N)$, and so we may assume that $G=N$. We claim then that $K$ is contained in $t(N)$, the subgroup of torsion elements of $N$, which is finite for finitely generated $N$
(see \cite{Robinson}, for example).

Let $y\in N$ be such that $y\notin t(N)$, and note that this is equivalent to saying that $y\notin\overline N_{\ell+1}$.
This implies in particular that there exists a smallest coordinate $j$ such that $y_j\ne 0$, and by Lemma \ref{lem:coords.mult.add} this coordinate satisfies
\begin{equation}\label{eq}
(yx)_i=x_i\qquad(i<j)\qquad\qquad\text{and}\qquad\qquad(yx)_j=x_j+y_j
\end{equation}
for every $x\in N$.

Define the polynomial $q$ on $N$ by $q(x)=x_j$, and note that $\deg q=\sigma(j)\le\ell$. It follows from Lemma \ref{lem:coords.mult.add} that $\Delta q(x)$ depends only on the coordinates $x_1,\ldots,x_j$.
However, it also follows from Proposition \ref{prop:laplacian.image} that $\Delta q$ is of degree at most $\sigma(j)-2$, and so no term of $\Delta q$ has an $x_j$ factor and $\Delta q$ therefore depends only on the coordinates $x_1,\ldots,x_{j-1}$.

Proposition \ref{prop:heil.surj} therefore implies that there is some polynomial $r$
of degree at most $\sigma(j)$ such that $r(x)$ depends only on the coordinates $x_1,\ldots,x_{j-1}$,
and such that $\Delta r=\Delta q$. Thus the polynomial $p$ defined by $p(x)=x_j-r(x)$ is harmonic and of degree at most $\sigma(j)$, which is at most $\ell$.
However, $p(yx)=p(x)+y_j$ by (\ref{eq}), and so $y\notin K$ and the corollary is proved.
\end{proof}


\begin{thebibliography}{10}
\bibitem{alex}
G. K. Alexopoulos. Random walks on discrete groups of polynomial volume growth, \textit{Ann. Probab.} \textbf{30}(2) (2002), 723-801.
\bibitem{bass}
H. Bass. The degree of polynomial growth of finitely generated nilpotent groups, \textit{Proc. London Math. Soc. (3)} \textbf{25}(4) (1972), 603-614.
\bibitem{bdky}
I. Benjamini, H. Duminil-Copin, G. Kozma and A. Yadin. Disorder, entropy and harmonic functions, \textit{Ann. Probab.} \textbf{43} (2015), 2332-2373.
\bibitem{col-min}
T. H. Colding and W. P. Minicozzi II. Harmonic functions on manifolds, \textit{Ann. of Math. (2)} \textbf{146}(3) (1997), 725-747.
\bibitem{gt}
B. J. Green and T. C. Tao. The quantitative behaviour of polynomial orbits on nilmanifolds, \textit{Ann. of Math. (2)} \textbf{175}(2) (2012), 465-540.
\bibitem{gromov}
M. Gromov. Groups of polynomial growth and expanding maps, \textit{Publ. Math. IHES} \textbf{53} (1981), 53-73.
\bibitem{guivarc'h}
Y. Guivarc'h. Croissance polynomiale et p\'eriodes des fonctions harmoniques, \textit{Bull. Soc. Math. France} \textbf{101} (1973), 333-379.
\bibitem{hall}
M. Hall. \textit{The theory of groups}, Amer. Math. Soc./Chelsea, Providence, RI (1999).
\bibitem{heilbronn}
H. A. Heilbronn. On discrete harmonic functions, \textit{Math. Proc. Cambridge Philos. Soc.} \textbf{45} (1949), 194-206.
\bibitem{hj}
B. Hua and J. Jost. Polynomial growth harmonic functions on groups of polynomial volume growth, \textit{Math. Z.} \textbf{280} (2015), 551-567.
\bibitem{hjlj}
B. Hua, J. Jost and X. Li-Jost. Polynomial growth harmonic functions on finitely generated abelian groups, \textit{Ann. Global Anal. Geom} \textbf{44} (2013), 417-432.
\bibitem{kai-ver}
V. A. Kaimanovich and A. M. Vershik. Random walks on discrete groups: boundary and entropy, \textit{Ann. Probab.} \textbf{11}(3) (1983), 457-490.
%\bibitem{km}
%M. Kargapolov and Ju. Merzljakov. \textit{Fundamentals of the theory of groups}, Springer-Verlag, Berlin (1979).
\bibitem{kleiner}
B. Kleiner. A new proof of Gromov's theorem on groups of polynomial growth, \textit{J. Amer. Math. Soc.} \textbf{23} (2010), 815-829.
\bibitem{lazard}
M. Lazard. Sur les groupes nilpotents et les anneaux de Lie, \textit{Ann. Sci. Ecole Norm. Sup. (3)} \textbf{71} (1954), 101-190.
\bibitem{leibman}
A. Leibman. Polynomial mappings of groups, \textit{Israel J. Math.} \textbf{129} (2002), 29-60.
\bibitem{malcev}
A. Mal'cev. On a class of homogeneous spaces, \textit{Izvestiya Akad. Nauk SSSR, Ser Mat.} \textbf{13} (1949), 9-32.
\bibitem{mey-yad}
T. Meyerovitch and A. Yadin. Harmonic functions of linear growth on solvable groups, \textit{Israel J. Math.} \textbf{216} (2016), 149-180.
\bibitem{rag}
M. S. Raghunathan. \textit{Discrete subgroups of Lie groups}, Springer, New York (1972).

\bibitem{Robinson}
D. Robinson. {\it A course in the theory of groups}, Vol. 80. Springer Science \& Business Media (1996).


\bibitem{nilp.frei}
M. C. H. Tointon. Freiman's theorem in an arbitrary nilpotent group, \textit{Proc. London Math. Soc.} \textbf{109} (2014), 318-352.
\bibitem{tointon}
M. C. H. Tointon. Characterisations of algebraic properties of groups in terms of harmonic functions, \textit{Groups Geom. Dyn.} \textbf{10} (2016), 1007-1049.
\end{thebibliography}
\end{document}